\title{Lower and upper bounds for the explosion times\\ of a system of semilinear SPDEs}
\author{S. Sankar${}^{1}$, Manil T. Mohan${}^{2}$,  and S. Karthikeyan${}^{1,}\thanks{Corresponding author email: karthi@periyaruniversity.ac.in}$\\
	\footnotesize{$^1$Department of Mathematics, Periyar University, Salem 636 011, India}\\
	\footnotesize{$^2$Department of Mathematics, Indian
		Institute of Technology Roorkee, Roorkee 247 667, India}}
\date{}
\let\originalleft\left
\let\originalright\right
\renewcommand{\left}{\mathopen{}\mathclose\bgroup\originalleft}
\renewcommand{\right}{\aftergroup\egroup\originalright}
\begin{document}
	\maketitle \setcounter{page}{1} \numberwithin{equation}{section}
	\newtheorem{theorem}{Theorem}[section]
	\newtheorem{lemma}{Lemma}[section]
	\newtheorem{Pro}{Proposition}[section]
	\newtheorem{Ass}{Assumption}[section]
	\newtheorem{Def}{Definition}[section]
	\newtheorem{Rem}{Remark}[section]
	\newtheorem{corollary}{Corollary}[section]
	\newtheorem{proposition}{Proposition}[section] 
	\newtheorem{ack}{Acknowledgement:}
	\begin{abstract}
		In this paper, we obtain lower and upper bounds for the blow-up times for a system of semilinear stochastic partial differential equations.  Under suitable assumptions, lower and upper  bounds of the explosive times are obtained by using explicit solutions of an associated system of random partial differential equations and a formula due to Yor. We provide lower and upper bounds for the probability of finite-time blow-up solution as well. The above obtained results are also extended for semilinear SPDEs forced by two dimensional Brownian motions.\\ \\
		\noindent {\it Keywords: Semilinear SPDEs; blow-up times; stopping times; lower and upper bounds; gamma distribution.}\\
		
		\noindent {\it MSC: 35R60; 60H15; 74H35}
	\end{abstract}

		\baselineskip 17pt
	\section{Introduction}
	Stochastic partial differential equations (SPDEs) play a prominent role in describing the evolution of many complex dynamical systems that are driven by random perturbations. An extensive range of physical phenomena, encountered in statistical mechanics, mathematical physics, mathematical biology,  fluid dynamics and mathematical finance can be effectively modelled by SPDEs. As there are only limited methods available to obtain standard analytical solutions to SPDEs, the problem of existence, uniqueness, stability and other properties of the solutions have been the focus of many researchers in recent years, cf. \cite{chen2017,chues2000, denis2005, doz2020, gess, Eug2017, Eug2019}, etc. and the references therein. Under suitable conditions, the problem of existence and uniqueness of global solutions to nonlinear SPDEs have been extensively investigated by many authors, for example, see \cite{chow2015,gyong2000}, etc.

	In the seminal paper \cite{fuji1966}, Fujita proved that  the following semilinear heat equation defined over a bounded smooth domain $D\subset\mathbb{R}^d$, $d\geq 1$
	\begin{equation*}
		\frac{\partial u(t,x)}{\partial t}=\Delta u(t,x)+u^{1+\alpha}(t,x), \ x\in D,
	\end{equation*}	
	with the Dirichlet boundary condition, where $\alpha>0$ is a constant,  explodes in finite time for all  non-negative initial data $u(0,x)\in L^2(D)$ satisfying $\int_Du(0,x)\psi(x)dx>\lambda^{\frac{1}{\alpha}}$, $\lambda$ being the first eigenvalue of the Laplacian on $D$ and $\psi$, the corresponding normalized eigenfunction with $\|\psi\|_{L^1}=1$.
	After the basic papers by Kaplan \cite{kaplan} and Fujita \cite{fuji1966, Fuji1968}, many researchers have developed new ideas for blow-up of solutions and investigated the blow-up phenomena to a more general class of nonlinear partial differential equations (PDEs), which develops singularities in finite time. In particular, by adopting different methods, the problem of existence of blow-up solutions in a finite time and estimating lower and upper bounds for blow-up times have been addressed by many researchers including but not limited to  \cite{liao,sha2019,sat2019,sat2021}, etc.
	
	In contrast to the deterministic case, very few results (blow-up phenomena) are known for SPDEs due to some complexity in stochastic analysis and so one can try to resolve such questions for at least a few special cases, see \cite{doz2014,li,wang}, etc. Using the well-known Yor formula (cf.\cite{Fenman1997,yor1992,yor2001}), Dozzi and L\'{o}pez-Mimbela \cite{doz2010}  derived lower and upper bounds for the finite-time blow-up of positive solutions of a semilinear SPDE. Chow and Liu \cite{chow2012} proved that the semi-linear stochastic functional parabolic equations of retarded type explodes in finite time in $L^{p}$-norm. Kolkovska and L\'{o}pez-Mimbela \cite{car2013} estimated the lower and upper bounds for the blow-up time of a semilinear heat equation on a bounded domain. Dozzi, Kolkovska and L\'{o}pez-Mimbela \cite{doz2013} obtained  the lower and upper bounds for the blow-up times to a system of semilinear SPDEs of the form:
	\begin{equation*}
		\left\{
		\begin{aligned}
			du_{1}(t,x)&=\left[ \left(\Delta+V_{1}\right)u_{1}(t,x)+u^{p}_{2}(t,x) \right]dt+k_{1}u_{1}(t,x)dW_{t} ,\nonumber\\
			du_{2}(t,x)&=\left[ \left(\Delta+V_{2}\right)u_{2}(t,x)+u^{q}_{1}(t,x) \right]dt+k_{2}u_{2}(t,x)dW_{t}, 
		\end{aligned}
		\right.
	\end{equation*}
	for $x\in D,\ t>0$, with the Dirichlet boundary conditions. Here $p\geq q>1$ are constants, $D \subset \mathbb{R}^{d}$ is a bounded smooth domain, $V_{i}>0$ and $k_{i}\neq 0$ are constants, $i=1,2,$ and $ W_{t} $ is a standard one-dimensional Brownian motion defined on some probability space $\left( \Omega, \mathscr{F}, \mathbb{P} \right)$.

	Motivated by this paper, we consider the problem to obtain lower and upper bounds for the blow-up times of the following  system of semilinear SPDEs: 
	%\begin{equation}\label{b1}
		%\left\{
	%\begin{aligned}
		%	du_{1}(t,x)&=\left[ \left(\Delta+V_{1}\right)u_{1}(t,x)+u^{1+m}_{1}(t,x)+u^{1+n}_{2}(t,x) \right]dt+k_{1}u_{1}(t,x)dW_{t}, \\
		%	du_{2}(t,x)&=\left[ \left(\Delta+V_{2}\right)u_{2}(t,x)+u^{1+p}_{1}(t,x)+u^{1+q}_{2}(t,x) \right]dt+k_{2}u_{2}(t,x)dW_{t}, 
		%\end{aligned}
		%\right.
	%\end{equation}
	\begin{equation}\label{b1} 
	\left\{
	\begin{aligned}
	du_{1}(t,x)&=\left[ \left(\Delta+V_{1}\right)u_{1}(t,x)+u^{1+\beta_{1}}_{1}(t,x)+u^{1+\gamma_{2}}_{2}(t,x) \right]dt+k_{1}u_{1}(t,x)dW_{t}, \\
	du_{2}(t,x)&=\left[ \left(\Delta+V_{2}\right)u_{2}(t,x)+u^{1+\gamma_{1}}_{1}(t,x)+u^{1+\beta_{2}}_{2}(t,x) \right]dt+k_{2}u_{2}(t,x)dW_{t}, 
	\end{aligned}
	\right.
	\end{equation}
	for $x \in D,\ t>0$, along with the Dirichlet conditions
	\begin{equation}\label{b2}
		\left\{
		\begin{array}{ll}
			u_{i}(0,x)=f_{i}(x)\geq 0,  &x \in D, \\
			u_{i}(t,x)=0, \ \ & x \in \partial D,\ t\geq 0,\  i=1,2.
		\end{array}
		\right.
	\end{equation}
	Here $\beta_{1}\geq \gamma_{2}\geq \gamma_{1}\geq \beta_{2}>0$ are constants, $D \subset \mathbb{R}^{d}$ is a bounded smooth domain, $V_{i}>0$ and $k_{i}\neq 0$ are constants, $f_{i}$ are of class $C^{2}$ and not identically zero for $i=1,2$ and $W_{t} $ is a standard one-dimensional Brownian motion defined on some probability space $\left( \Omega, \mathscr{F}, \mathbb{P} \right)$. One can refer to Proposition 3.7 in \cite{gyong2000} and Theorem 9.15 in \cite{peszat} for existence of weak and mild solutions of  \eqref{b1}-\eqref{b2} and their equivalence. 
%The positivity of the solution of (1) follows from comparison theorems (see e.g. Berg´e et al. [2] or Manthey and Zausinger [14]).
	
	The aim of this article is to derive estimates for upper and lower bounds to the blow-up times of the solution $u=(u_1,u_2)^{\top}$ of the system \eqref{b1}-\eqref{b2}. Using the explicit solutions of an associated system of random PDEs, the distribution  functions of several blow-up times are obtained. Also we obtain probabilistic bounds for the blow-up solution of the system \eqref{b1}-\eqref{b2} for $V_1=V_2=0$. Recall that $\lambda>0$ is the first eigenvalue of the Laplacian operator $-\Delta$ on $D$ and $\psi$ is the corresponding normalized  eigenfunction so that $\|\psi\|_{L^1}=1$. 
	%Initial values are taken of the form $f_{i}=k_{i}\psi$, where $k_{i}>0$ are constants and  $V_{i}=\lambda+\frac{k_{i}^{2}}{2}, \ i=1,2$. Choices of these quantities are useful to derive the explicit solutions of an associated system of random PDEs by using a suitable transformation.
	
	The rest of the article is structured as follows: The next section is devoted to obtain an associated system of random PDEs, by using  a random transformation of the system \eqref{b1}-\eqref{b2}, which is useful to establish  lower and upper bounds for the blow-up time $\tau$. In sections   \ \ref{sec3} and \ref{sec4}, we obtain the lower and upper bounds for the blow-up times for the case $(1+\beta_{1})k_{1}-k_{1}=(1+\gamma_{2})k_{2}-k_{1} =:\rho_{1},\	(1+\gamma_{1})k_{1}-k_{2}=(1+\beta_{2})k_{2}-k_{2}=:\rho_{2}$. Under the above setting, the lower and upper bounds of the finite-time blow-up  time $\tau$ ($\tau_*$ and $\tau^*$) are obtained explicitly  (Theorems \ref{t2} and \ref{thm4.1}) in terms of the relevant exponential functionals of the form $\int_{0}^{t}e^{\rho_{i}W_{r}}dr$, $i=1,2$. The above condition has been relaxed in section \ref{sec5}, and the lower and upper bounds ($\tau_{**}$ and $\tau^{**}$) for the blow-up time $\tau$ of the solution to the system \eqref{b1}-\eqref{b2} is established (Theorems \ref{thm5.1} and \ref{thm5.2}). In section \ref{sec6}, by applying the Yor formula (cf. \cite{yor2005},\cite{revuz1999}\cite{yor2001}, etc.) and the method adopted in \cite{doz2010, Eug2017} and \cite{niu2012} with suitable parameters, we provide lower and upper bounds for the probability of blow-up solution of the system \eqref{b1}-\eqref{b2} with $V_1 = V_2 = 0$. An extension of the above results to  semilinear SPDEs driven by two-dimensional Brownian motions is given in section \ref{sec7}. 
	
	\section{A System of Random PDEs}\label{sec2}
	In this section, we obtain a system of random PDEs from the  system \eqref{b1}-\eqref{b2}  by using random transformations,
	\begin{eqnarray}\label{1}
		v_{i}(t,x)=\exp\left\{-k_{i}W_{t}\right\}u_{i}(t,x), \ i=1,2,
	\end{eqnarray}
	for $t\geq 0, \ x\in D$. 
	
Let  $\left\{ S_{t} \right\}$ be the semigroup of bounded linear operators defined by
	\begin{eqnarray}
		S_{t}f(x)=\mathbb{E}\left[ f(X_{t}), \ t<\tau_{D}|X_{0}=x \right], \quad  x\in D,\nonumber
	\end{eqnarray}
	for all bounded and measurable $f : D\rightarrow \mathbb{R},$ where $\left\{ X_{t} \right\}_{t\geq 0}$ is the $d$-dimensional Brownian motion with variance parameter $2$, killed at the time $\tau_{D}$ at which it strikes the boundary $\partial D.$ As discussed above,  $\lambda>0$ is the first eigenvalue of the Laplacian on $D$, which satisfies 
	\begin{eqnarray} \label{a3}
		-\Delta \psi(x)=\lambda \psi(x), \ \ x\in D,
	\end{eqnarray}
	$\psi$ being the corresponding eigenfunction, which is strictly positive on $D$ (see Corollary 3.3.7 in \cite{davies}) and $\psi|_{\partial D} =0.$ Remember that $$S_{t}\psi=\exp\{{-\lambda t}\}\psi, \ \ t\geq 0.$$ We assume that $\psi$ is normalized so that $\int_{D} \psi(x)dx=1.$
	
	By Ito's formula for one-dimensional Brownian motion (see \cite{evans,klebaner} and Theorem 2.10 in \cite{Gawarecki}), we get 
	\begin{eqnarray}
		\exp\{{-k_{i}W_{t}}\}=1-k_{i} \int_{0}^{t} \exp\{{-k_{i}W_{s}}\}dW_{s}+\frac{k_{i}^{2}}{2} \int_{0}^{t} \exp\{{-k_{i}W_{s}}\}ds. \nonumber 
	\end{eqnarray}
    For test functions $\varphi_i\in C^\infty_0(D),\ i=1,2, $ let us take $$u_{i}(t,\varphi_{i}):=\int_{D}u_{i}(t,x)\varphi_{i}(x)dx.$$ 
	
%	For convenience, we use the following notations:
	% $$	 u^{\alpha}(t,\varphi):=\int_{D}u^{\alpha}(t,x)\varphi(x)dx \mbox{ and } u(t,\varphi)^{\alpha}:=\left(\int_{D}u(t,x)\varphi(x)dx\right)^{\alpha} $$
	 %where $\varphi$ is any smooth function with compact support.
	Let us recall the notion of weak solution of $\eqref{b1}$. Let $\tau \leq  +\infty$ be a stopping time. A continuous random vector field $u=(u_1,u_2)^{\top}=\left\lbrace (u_{1}(t,x),u_2(t,x))^{\top}: \ t\geq 0,\ x \in D \right\rbrace $ is a weak solution of $\eqref{b1}$ on the interval $(0,\tau)$ provided that for every $\varphi_i\in C^\infty_0(D),$ there holds	
		\begin{align}\label{c1} 
	u_{i}\left(t,\varphi_{i}\right)&=u_{i}\left(0,\varphi_{i}\right)+\int_{0}^{t}u_{i}\left(s,(\Delta+V_{i})\varphi_{i}\right)ds+\int_{0}^{t}u^{1+\beta_{i}}_{i}\left(s,\varphi_{i}\right)ds\nonumber\\
	&\quad+\int_{0}^{t} u^{1+\gamma_{j}}_{j}\left(s,\varphi_{i}\right)ds+k_{i}\int_{0}^{t}u_{i}\left(s,\varphi_{i}\right)dW_{s},
	\end{align}
where $i=1,2,\ \left\lbrace j \right\rbrace=\left\lbrace1,2\right\rbrace/\{i\}$. By applying the integration by parts formula Chapter 8 in \cite{klebaner}, we obtain 
	\begin{align*}
	v_{i}(t,\varphi_{i}):&=\int_{D}v_{i}(t,x)\varphi_{i}(x)dx \nonumber\\
	&=v_{i}(0,\varphi_{i})+\int_{0}^{t} \exp\{{-k_{i}W_{s}}\}du_{i}(s,\varphi_{i})\nonumber\\
	&\quad+\int_{0}^{t} u_{i}(s,\varphi_{i})\left(-k_{i}  \exp\{{-k_{i}W_{s}}\}dW_{s}+\frac{k_{i}^{2}}{2}  \exp\{{-k_{i}W_{s}}\}ds\right)\nonumber\\
	&\quad+\left[\exp\{{-k_{i}W_{\cdot}}\},u_{i}(\cdot,\varphi_{i}) \right](t),\nonumber   
	\end{align*}
	where the quadratic variation (Chapter 8 in \cite{klebaner}) is given by
	\begin{eqnarray}
		\left[\exp\{{-k_{i}W_{\cdot}}\},u_{i}(\cdot,\varphi_{i}) \right](t)=-k^{2}_{i} \int_{0}^{t} \exp\{{-k_{i}W_{s}}\}u_{i}(s,\varphi_{i})ds, \ \ t\geq 0.\nonumber 
	\end{eqnarray}
	Therefore, it can be easily seen that 
	\begin{align} \label{c3}
%		v_{i}(t,\varphi_{i})&=v_{i}(0,\varphi_{i})+\int_{0}^{t}v_{i}(s,(\Delta+V_{i})\varphi_{i})ds+\int_{0}^{t} \exp\{{-k_{i}W_{s}}\}\left( v_{i}(s,\varphi_{i})\exp\{{k_{i}W_{s}}\} \right)^{1+\beta_{i}}ds\nonumber\\
%		&\quad+\int_{0}^{t} \exp\{{-k_{i}W_{s}}\}\left( v_{j}(s,\varphi_{i})\exp\{{k_{j}W_{s}}\} \right)^{1+\gamma_{j}}ds-\frac{k^{2}_{i}}{2}\int_{0}^{t}v_{i}(s,\varphi_{i})ds \nonumber \\
		v_{i}(t,\varphi_{i})&=v_{i}(0,\varphi_{i})+\int_{0}^{t}\left[ v_{i}(s,(\Delta+V_{i})\varphi_{i})-\frac{k^{2}_{i}}{2}v_{i}(s,\varphi_{i}) \right]ds\nonumber\\
		&\quad+\int_{0}^{t} \exp\{{-k_{i}W_{s}}\}( \exp(k_{i}W_{s}) v_{i})^{1+\beta_{i}}(s,\varphi_{i})ds\nonumber\\
		&\quad+\int_{0}^{t} \exp\{{-k_{i}W_{s}}\}\left( \exp\{{k_{j}W_{s}}\} v_{j}\right)^{1+\gamma_{j}}(s,\varphi_{i})ds.  
	\end{align} 
	Hence, the vector $\left( v_{1}(t,x),v_{2}(t,x) \right)^{\top}$ is a weak solution of the system
	\begin{equation}\label{pde1}
		\left\{
		\begin{aligned}
			\frac{\partial v_{i}(t,x)}{\partial t}&=\left( \Delta+V_{i}-\frac{k^{2}_{i}}{2} \right)v_{i}(t,x)+\exp\{{-k_{i}W_{t}}\}\left( \exp\{{k_{i}W_{t}}\}v_{i}(t,x) \right)^{1+\beta_{i}}\\
			&\quad+\exp\{{-k_{i}W_{t}}\}\left( \exp\{{k_{j}W_{t}}\}v_{j}(t,x) \right)^{1+\gamma_{j}}, \\  v_{i}(0,x)&=f_{i}(x), \ \ i=1,2.
		\end{aligned}
		\right.
	\end{equation}
	The above system is understood in the pathwise sense and thus classical results for parabolic PDEs can be applied to show existence, uniqueness and positivity of solution $v=(v_1,v_2)^{\top}$ up to an eventual blow-up (see \cite{doz2013} and Friedman Chapter 7 in \cite{friedman1964}). 
	
	The integral form of \eqref{pde1} is given by (see Chapter 4 in \cite{pazy})
	\begin{align}\label{a2}
	v_{1}(t,x)&=\exp \left\{ t \left( V_{1}-\frac{k^{2}_{1}}{2}\right) \right\}S_{t}f_{1}(x)\nonumber\\
	&\quad+\int_{0}^{t}\exp\left\{{(t-r) \left(V_{1}-\frac{k^{2}_{1}}{2} \right)}\right\}S_{t-r}\left[ \exp\{{-k_{1}W_{r}}\}\left(\exp\{{k_{1}W_{r}}\}v_{1}(r,\cdot)\right)^{1+\beta_{1}} \right](x)dr\nonumber\\
	&\quad +\int_{0}^{t}\exp \left\{{(t-r)\left(V_{1}-\frac{k^{2}_{1}}{2}\right)}\right\}S_{t-r}\left[ \exp\{{-k_{1}W_{r}}\}\left( \exp\{{k_{2}W_{r}}\}v_{2}(r,\cdot)\right)^{1+\gamma_{2}} \right](x)dr,\nonumber\\
	v_{2}(t,x)&=\exp \left\{ t \left( V_{2}-\frac{k^{2}_{2}}{2}\right) \right\}S_{t}f_{2}(x)\nonumber\\
	&\quad+\int_{0}^{t}\exp\left\{{(t-r) \left(V_{2}-\frac{k^{2}_{2}}{2} \right)}\right\}S_{t-r}\left[ \exp\{{-k_{2}W_{r}}\}\left(\exp\{{k_{2}W_{r}}\}v_{2}(r,\cdot)\right)^{1+\beta_{2}} \right](x)dr\nonumber\\
	&\quad +\int_{0}^{t}\exp \left\{{(t-r)\left(V_{2}-\frac{k^{2}_{2}}{2}\right)}\right\}S_{t-r}\left[ \exp\{{-k_{2}W_{r}}\}\left( \exp\{{k_{1}W_{r}}\}v_{1}(r,\cdot)\right)^{1+\gamma_{1}} \right](x)dr,
	\end{align}
	for $t\geq 0,$ which is also a mild form of \eqref{pde1}.
	
	Due to \eqref{1} and the a.s. continuity of Brownian paths, it follows that if $\tau$  is the blow-up time of the system \eqref{pde1}  with the initial values of the above form, then $\tau$ is also the blow-up time for the system \eqref{b1}-\eqref{b2} (see Corollary 1 in \cite{doz2014}). Our aim is to find random times $\tau_{*}$ and $\tau^{*}$ such that $0\leq \tau_*\leq \tau\leq \tau^*$.

	\section{A Lower Bound for $\tau$}\label{sec3}
	In this section, we find a lower bound $\tau_*$ to the blow-up times such that $ \tau_*\leq \tau$.	First, we consider the equation \eqref{b1}-\eqref{b2} with parameters
	\begin{equation}\label{a4}
		\begin{aligned}
			V_{i}=\lambda+\frac{k^{2}_{i}}{2}, \ \ i=1,2, \ \   (1+\beta_{1})k_{1}-k_{1}&=(1+\gamma_{2})k_{2}-k_{1} =:\rho_{1},\\
			(1+\gamma_{1})k_{1}-k_{2}&=(1+\beta_{2})k_{2}-k_{2}=:\rho_{2}. 
		\end{aligned}
			\end{equation}
			%and initial data	$f_{1}=L_{1} \psi$ \mbox{and} $f_{2}=L_{2} \psi$.
	 
	\begin{theorem}\label{t2} Assume that the conditions (\ref{a4}) hold, and let the initial values be of the form
		\begin{eqnarray} \label{q1}
			f_{1}=L_{1} \psi \ \ \mbox{and} \ \ f_{2}=L_{2} \psi ,
		\end{eqnarray} 
		for some positive constants $L_{1}$ and $L_{2}$ with $L_{1}\leq L_{2}$. Let $\tau_{\ast}$ be given by
		\iffalse 
		\begin{align}
		\tau_{\ast} = \inf \Bigg\{ t\geq 0 : &\int_{0}^{t} \{\exp\{{\rho_{1}W_{r}}\}dr	\geq \min \Bigg\{ \frac{1}{(\beta_{1}+\gamma_{2}+1)(L^{\beta_{1}}_{1}\left\|\psi \right\|_{\infty}^{\beta_{1}}+L^{\gamma_{2}}_{1}\left\|\psi \right\|_{\infty}^{\gamma_{2}})}, \nonumber\\ & \hspace{1.6 in} \frac{1}{(\gamma_{1}+\beta_{2}+1)(L^{\gamma_{1}}_{2}\left\|\psi \right\|_{\infty}^{\gamma_{1}}+L^{\beta_{2}}_{2}\left\|\psi \right\|_{\infty}^{\beta_{2}})} \Bigg\}\nonumber\\	\mbox{ or }&\int_{0}^{t} \{\exp\{{\rho_{2}W_{r}}\}dr	\geq \min \Bigg\{ \frac{1}{(\beta_{1}+\gamma_{2}+1)(L^{\beta_{1}}_{1}\left\|\psi \right\|_{\infty}^{\beta_{1}}+L^{\gamma_{2}}_{1}\left\|\psi \right\|_{\infty}^{\gamma_{2}})}, \nonumber\\ & \hspace{1.6 in} \frac{1}{(\gamma_{1}+\beta_{2}+1)(L^{\gamma_{1}}_{2}\left\|\psi \right\|_{\infty}^{\gamma_{1}}+L^{\beta_{2}}_{2}\left\|\psi \right\|_{\infty}^{\beta_{2}})} \Bigg\}   \Bigg\}. \nonumber	
		\end{align}
	\fi 
		
		\begin{align}\label{e1}
		\tau_{\ast} = \inf \Bigg\{ t\geq 0 : &\int_{0}^{t} \exp\{{\rho_{1}W_{r}}\}dr
		\geq  \frac{1}{(\beta_{1}+\gamma_{2}+1)(L^{\beta_{1}}_{1}\left\|\psi \right\|_{\infty}^{\beta_{1}}+L^{\gamma_{2}}_{1}\left\|\psi \right\|_{\infty}^{\gamma_{2}})}, \nonumber\\
		\mbox{ or }&\int_{0}^{t} \exp\{{\rho_{2}W_{r}}\}dr
		\geq  \frac{1}{(\gamma_{1}+\beta_{2}+1)(L^{\gamma_{1}}_{2}\left\|\psi \right\|_{\infty}^{\gamma_{1}}+L^{\beta_{2}}_{2}\left\|\psi \right\|_{\infty}^{\beta_{2}})} \Bigg\},
		\end{align}
	where $\|\psi\|_{\infty}:=\sup\limits_{x\in D}\psi(x)$. 	Then $\tau_{\ast}\leq\tau$.
		\begin{proof}
			Let $v_{1}$ and $v_{2}$ solve (\ref{a2}). Then, we have
			\begin{align}
				v_{1}(t,x)&=\exp\{{\lambda t}\}S_{t}f_{1}(x)+\int_{0}^{t}\exp\{{\lambda (t-r)}\}S_{t-r} \left( \exp\{{\rho_{1} W_{r}}\}v^{1+\beta_{1}}_{1}(r,x) \right)dr\nonumber\\
				&\quad+\int_{0}^{t} \exp\{{ \lambda (t-r)}\}S_{t-r}\left( \exp\{{ \rho_{1} W_{r}}\}v^{1+\gamma_{2}}_{2}(r,x)\right)dr,\nonumber\\
				v_{2}(t,x)&=\exp\{{\lambda t}\}S_{t}f_{2}(x)+\int_{0}^{t}\exp\{{\lambda (t-r)}\}S_{t-r} \left( \exp\{{\rho_{2} W_{r}}\}v^{1+\gamma_{1}}_{1}(r,x) \right)dr\nonumber\\
				&\quad+\int_{0}^{t} \exp\{{ \lambda (t-r)}\}S_{t-r}\left( \exp\{{ \rho_{2} W_{r}}\}v^{1+\beta_{2}}_{2}(r,x)\right)dr, \ \ x\in D, \ \ t\geq 0. \nonumber
			\end{align}
			Let us define the operators $\mathcal{T}_{1}, \ \mathcal{T}_{2}$ as
			\begin{align}
			\mathcal{T}_{1}[v,w](t,x)&=\exp\{{\lambda t}\}S_{t}f_{1}(x)+\int_{0}^{t}\exp\{{\rho_{1} W_{r}+\lambda (t-r)}\}\left( S_{t-r}v \right)^{1+\beta_{1}}dr\nonumber\\
			&\quad+\int_{0}^{t}\exp\{{\rho_{1} W_{r}+\lambda (t-r)}\}\left( S_{t-r}w \right)^{1+\gamma_{2}}dr, \nonumber\\
			\mathcal{T}_{2}[v,w](t,x)&=\exp\{{\lambda t}\}S_{t}f_{2}(x)+\int_{0}^{t}\exp\{{\rho_{2} W_{r}+\lambda (t-r)}\}\left( S_{t-r}v \right)^{1+\gamma_{1}}dr\nonumber\\
			&\quad+\int_{0}^{t}\exp\{{\rho_{2} W_{r}+\lambda (t-r)}\}\left( S_{t-r}w \right)^{1+\beta_{2}}dr, \nonumber
			\end{align}
			where $v$ and $w$ are non-negative, bounded and measurable functions. 
			
			First we shall prove that \begin{eqnarray}
			v_{1}(t,x)=\mathcal{T}_{1}\left[ v_{2},v_{1} \right](t,x), \  v_{2}(t,x)=\mathcal{T}_{2}\left[ v_{1},v_{2} \right](t,x), \ \ x\in D, \ \ 0 \leq t<\tau_{*}.\nonumber
			\end{eqnarray}
			for some non-negative, bounded and measurable functions $v_1$ and $v_2$.
			
            We again recall that the references Proposition 3.7 in \cite{gyong2000} and Theorem 9.15 in \cite{peszat} for the existence of weak  and mild solutions and their equivalence. Moreover, on the set $t< \tau_{*},$ we fix
			\begin{align}
				\mathscr{G}_{1}(t)&=\left[ 1-\left(\beta_{1}+\gamma_{2}+1\right) \int_{0}^{t}  e^{\rho_{1} W_{r}} \left( \left\| \exp\{{\lambda r}\}S_{r}f_{1} \right\|_{\infty}^{\beta_{1}}+\left\| \exp\{{\lambda r}\}S_{r}f_{1} \right\|_{\infty}^{\gamma_{2}}\right)dr\right]^{\frac{-1}{\beta_{1}+\gamma_{2}+1}},\nonumber\\
				\mathscr{G}_{2}(t)&=\left[ 1-\left(\gamma_{1}+\beta_{2}+1\right) \int_{0}^{t} e^{\rho_{2} W_{r}} \left( \left\| \exp\{{\lambda r}\}S_{r}f_{2} \right\|_{\infty}^{\gamma_{1}}+\left\| \exp\{{\lambda r}\}S_{r}f_{2} \right\|_{\infty}^{\beta_{2}}\right)dr\right]^{\frac{-1}{\gamma_{1}+\beta_{2}+1}}.\nonumber
			\end{align}
         By \eqref{e1}, it is immediate that $\mathscr{G}_{1}(t)$ and $\mathscr{G}_{2}(t)$ are well defined. Then, it can be easily seen that 
			\begin{eqnarray}
				\frac{d \mathscr{G}_{1}(t)}{dt}=\exp\{{\rho_{1} W_{t}}\} \left( \left\| \exp\{{\lambda t}\}S_{t}f_{1} \right\|_{\infty}^{\beta_{1}}+\left\| \exp\{{\lambda t}\}S_{t}f_{1} \right\|_{\infty}^{\gamma_{2}}\right) \mathscr{G}^{\beta_{1}+\gamma_{2}+2}_{1}(t), \ \ \mathscr{G}_{1}(0)=1, \nonumber
			\end{eqnarray}
			so that 
			\begin{eqnarray}
			\mathscr{G}_{1}(t)=1+\int_{0}^{t} \exp\{{\rho_{1} W_{r}}\} \left( \left\| \exp\{{\lambda r}\}S_{r}f_{1} \right\|_{\infty}^{\beta_{1}}+\left\| \exp\{{\lambda r}\}S_{r}f_{1} \right\|_{\infty}^{\gamma_{2}}\right) \mathscr{G}^{\beta_{1}+\gamma_{2}+2}_{1}(r) dr. \nonumber
			\end{eqnarray}
			Similarly, we have 
			\begin{eqnarray}
			\mathscr{G}_{2}(t)=1+\int_{0}^{t} \exp\{{\rho_{2} W_{r}}\} \left( \left\| \exp\{{\lambda r}\}S_{r}f_{2} \right\|_{\infty}^{\gamma_{1}}+\left\| \exp\{{\lambda r}\}S_{r}f_{2} \right\|_{\infty}^{\beta_{2}}\right) \mathscr{G}^{\gamma_{1}+\beta_{2}+2}_{2}(r) dr. \nonumber
			\end{eqnarray}
			Let us choose $v,w_{1}\geq 0$ such that $$v(t,x)\leq \exp\{{\lambda t}\}S_{t}f_{1}(x)\mathscr{G}_{1}(t), \ \ w_{1}(t,x)\leq \exp\{{\lambda t}\}S_{t}f_{1}(x)\mathscr{G}_{1}(t),$$ for $x\in D$ and $t<\tau_{*}.$ Then $\exp\{{\lambda t}\}S_{t}f_{1}(x)\leq \mathcal{T}_{1}\left[ v,w_{1} \right](t,x)$ and
			\begin{align}
				\mathcal{T}_{1}[v,w_{1}](t,x)&=\exp\{{\lambda t}\}S_{t}f_{1}(x)+\int_{0}^{t}\exp\{{\rho_{1} W_{r}+\lambda (t-r)}\} \left( S_{t-r}v(r,x) \right)^{1+\beta_{1}}dr\nonumber \\
				& \quad +\int_{0}^{t}\exp\{{\rho_{1} W_{r}+\lambda (t-r)}\} \left( S_{t-r}w_{1}(r,x) \right)^{1+\gamma_{2}}dr \nonumber\\
				&\leq \exp\{{\lambda t}\}S_{t}f_{1}(x)+\int_{0}^{t}\exp\{{\rho_{1} W_{r}+\lambda (t-r)}\}\nonumber\\ 
				& \qquad\times\left[ \exp\{{\lambda r}\} \mathscr{G}_{1}(r)\exp\{{\beta_{1}\lambda r}\} \mathscr{G}^{\beta_{1}}_{1}(r) \left\|S_{r}f_{1}\right\|_{\infty}^{\beta_{1}}S_{t-r}(S_{r}f_{1}(x)) \right]dr \nonumber\\
				& \quad +\int_{0}^{t}\exp\{{\rho_{1} W_{r}+\lambda (t-r)}\}\nonumber\\&\qquad\times\left[ \exp\{{\lambda r}\}\mathscr{G}_{1}(r)\exp\{{\gamma_{2}\lambda r}\} \mathscr{G}^{\gamma_{2}}_{1}(r)\left\|S_{r}f_{1}\right\|_{\infty}^{\gamma_{2}}S_{t-r}(S_{r}f_{1}(x)) \right]dr \nonumber\\
				&= \exp\{{\lambda t}\}S_{t}f_{1}(x)\Bigg\{ 1+ \int_{0}^{t} \exp\{{\rho_{1} W_{r}}\}\left\| \exp\{{\lambda r}\}S_{r}f_{1} \right\|_{\infty}^{\beta_{1}} \mathscr{G}^{1+\beta_{1}}_{1}(r)dr\nonumber\\
				& \qquad+\int_{0}^{t} \exp\{{\rho_{1} W_{r}}\} \left\| \exp\{{\lambda r}\} S_{r}f_{1} \right\|_{\infty}^{\gamma_{2}} \mathscr{G}^{1+\gamma_{2}}_{1}(r)dr\Bigg\} \nonumber\\
				&\leq \exp\{{\lambda t}\} S_{t}f_{1}(x) \bigg[ 1+\int_{0}^{t}\exp\{{\rho_{1} W_{r}}\} \nonumber\\&\qquad\times\left( \left\| \exp\{{\lambda r}\} S_{r}f_{1} \right\|_{\infty}^{\beta_{1}}+\left\| \exp\{{\lambda r}\} S_{r}f_{1} \right\|_{\infty}^{\gamma_{2}}\right) \left( \mathscr{G}^{1+\beta_{1}}_{1}(r)+\mathscr{G}^{1+\gamma_{2}}_{1}(r)\right)dr \bigg] \nonumber\\
				&\leq \exp\{{\lambda t}\}S_{t}f_{1}(x) \bigg[ 1+\int_{0}^{t}\exp\{{\rho_{1} W_{r}}\}\nonumber\\&\qquad\times \left( \left\| \exp\{{\lambda r}\} S_{r}f_{1} \right\|_{\infty}^{\beta_{1}}+\left\| \exp\{{\lambda r}\} S_{r}f_{1} \right\|_{\infty}^{\gamma_{2}}\right) \mathscr{G}^{\beta_{1}+\gamma_{2}+2}_{1}(r)dr \bigg] \nonumber\\
				&=\exp\{{\lambda t}\} S_{t}f_{1}(x)\mathscr{G}_{1}(t).\nonumber
			\end{align}
			Similarly, we get 
			\begin{eqnarray}
				\exp\{{\lambda t}\}S_{t}f_{2}(x)\leq \mathcal{T}_{2}\left[ u,w_{2} \right](t,x)\leq \exp\{{\lambda t}\}S_{t}f_{2}(x) \mathscr{G}_{2}(t), \nonumber
			\end{eqnarray}
			for all $u, w_2$ such that $$0\leq u(t,x)\leq \exp\{{\lambda t}\} S_{t}f_{2}(x)\mathscr{G}_{2}(t), \ \ 0\leq w_{2}(t,x)\leq \exp\{{\lambda t}\} S_{t}f_{2}(x)\mathscr{G}_{2}(t).$$ Let us take, for $x\in D$ and $0\leq t\leq \tau_{*},$ 
			\begin{eqnarray}\label{in1}
			u^{(0)}_{1}(t,x)=\exp\{{\lambda t}\}S_{t}f_{1}(x), \ \ u^{(0)}_{2}(t,x)=\frac{L_{1}}{L_{2}} \exp\{{\lambda t}\}S_{t}f_{2}(x),  
			\end{eqnarray}  
			and
			\begin{eqnarray}\label{in2}
			u^{(n)}_{1}(t,x)=\mathcal{T}_{1}[ u^{(n-1)}_{2},u_{1}^{(n-1)} ](t,x), \ \ u^{(n)}_{2}(t,x)=\mathcal{T}_{2}[ u^{(n-1)}_{1},u_{2}^{(n-1)} ](t,x), \ \ n\geq1. 
			\end{eqnarray}
			Our aim is to show that the sequences of functions $\{ u^{(n)}_{1} \} \ \mbox{and} \ \{u^{(n)}_{2}\}$ are increasing.
			Let us consider
			\begin{align}
				u^{(0)}_{1}(t,x)&\leq \exp\{{\lambda t}\} S_{t}f_{1}(x)+\int_{0}^{t} \exp\{{\rho_{1} W_{r}+\lambda (t-r)}\} \left( S_{t-r}u^{(0)}_{2}(r,x) \right)^{1+\beta_{1}}dr\nonumber\\
				&\quad+\int_{0}^{t} \exp\{{\rho_{1} W_{r}+\lambda (t-r)}\} \left( S_{t-r}u_{1}^{(0)}(r,x) \right)^{1+\gamma_{2}}dr\nonumber\\
				&= \mathcal{T}_{1}[ u^{(0)}_{2},u_{1}^{(0)}](t,x) = u^{(1)}_{1}(t,x).\nonumber
			\end{align}
			Now assume that $u^{(n)}_{i}\geq u^{(n-1)}_{i}, \ i=1,2.$  Then 
			\begin{eqnarray}
				u^{(n+1)}_{1}=\mathcal{T}_{1} [ u^{(n)}_{2},u_{1}^{(n)}]\geq \mathcal{T}_{1}[ u^{(n-1)}_{2},u_{1}^{(n-1)}]=u^{(n)}_{1}.\nonumber
			\end{eqnarray}
			Similarly, we have
			\begin{eqnarray}
			u^{(n+1)}_{2}=\mathcal{T}_{2} [ u^{(n)}_{1},u_{2}^{(n)}]\geq \mathcal{T}_{2}[ u^{(n-1)}_{1},u_{2}^{(n-1)}]=u^{(n)}_{2},\nonumber
			\end{eqnarray}
			where we have used the monotonicity of the operators $\mathcal{T}_{1}$ and $\mathcal{T}_{2}$ to obtain the above inequality. Therefore, the limits 
			\begin{eqnarray}
				v_{1}(t,x)=\lim_{n\rightarrow \infty}u^{(n)}_{1}(t,x), \ \ v_{2}(t,x)=\lim_{n\rightarrow \infty}u^{(n)}_{2}(t,x), \nonumber
			\end{eqnarray}
			exist for $x\in D$ and $0 \leq t< \tau_{*}.$ Then by the monotone convergence theorem, we obtain 
			\begin{eqnarray}
				v_{1}(t,x)=\mathcal{T}_{1}\left[ v_{2},v_{1} \right](t,x), \  v_{2}(t,x)=\mathcal{T}_{2}\left[ v_{1},v_{2} \right](t,x), \ \ x\in D, \ \ 0 \leq t<\tau_{*}.\nonumber
			\end{eqnarray}
			Since $f_{1}=L_{1} \psi$ and $f_{2}=L_{2} \psi,$ for some positive constants $L_{1}$ and $L_{2}$ with $L_{1} \leq L_{2}$, from \eqref{in1} it follows that
			\begin{align}\label{eq2}
			\left\{
			\begin{aligned}
			0 &\leq u_{1}^{(0)}(t,x)=e^{\lambda t}S_{t}\left(\frac{L_{1}}{L_{2}} f_{2}(x)\right)=\frac{L_{1}}{L_{2}}e^{\lambda t}S_{t} f_{2}(x)\leq e^{\lambda t}S_{t} f_{2}(x) \mathscr{G}_{2}(t), \\
			0 &\leq \frac{L_{2}}{L_{1}} u_{2}^{(0)}(t,x)=e^{\lambda t}S_{t}\left(\frac{L_{2}}{L_{1}} f_{1}(x)\right)=\frac{L_{2}}{L_{1}}e^{\lambda t}S_{t} f_{1}(x), \\
			&u_{2}^{(0)}(t,x)=e^{\lambda t}S_{t} f_{1}(x)\leq e^{\lambda t}S_{t} f_{1}(x) \mathscr{G}_{1}(t).  
			\end{aligned}
			\right.   
			\end{align}
		From \eqref{in2}, we have
			\begin{align}\label{eq3}
			u^{(1)}_{1}(t,x)&=\mathcal{T}_{1}[u^{(0)}_{2},u_{1}^{(0)} ](t,x), 
			\end{align}
			Using  \eqref{in1} and \eqref{eq2}, we get 
			$u_{2}^{(0)}(t,x) \leq e^{\lambda t}S_{t} f_{1}(x)\mathscr{G}_{1}(t) \ \ \mbox{and} \ \  u_{1}^{(0)}(t,x) \leq e^{\lambda t}S_{t}f_{1}(x) \mathscr{G}_{1}(t).$  	
			Therefore from \eqref{eq3}, we obtain 
			\begin{align}\label{2}
			e^{\lambda t}S_{t}f_{1}(x) \leq u^{(1)}_{1}(t,x) \leq  e^{\lambda t}S_{t}f_{1}(x) \mathscr{G}_{1}(t).
			\end{align}
			From \eqref{in2}, we find 
			\begin{align}\label{b11}
			u^{(1)}_{2}(t,x)=\mathcal{T}_{2}[u^{(0)}_{1},u_{2}^{(0)} ](t,x). 
			\end{align}
			We know from \eqref{in1} and \eqref{eq2}  that $u_{1}^{(0)}(t,x) \leq e^{\lambda t}S_{t}f_{2}(x)\mathscr{G}_{2}(t),$ and $u_{2}^{(0)}(t,x) \leq e^{\lambda t}S_{t}f_{2}(x)\mathscr{G}_{2}(t)$.		
			Therefore from \eqref{b11}, we deduce 
			\begin{align}
			u^{(1)}_{2}(t,x)=\mathcal{T}_{2}[u^{(0)}_{1},u_{2}^{(0)} ](t,x) &\leq e^{\lambda t}S_{t}f_{2}(x) \mathscr{G}_{2}(t), \nonumber\\
			e^{\lambda t}S_{t}f_{2}(x) \leq u^{(1)}_{2}(t,x) \leq  e^{\lambda t}S_{t}f_{2}(x) \mathscr{G}_{2}(t). \nonumber 
			\end{align}
			By \eqref{in2}, we have
			\begin{align}\label{3}
			u^{(2)}_{1}(t,x)&=\mathcal{T}_{1}[u^{(1)}_{2},u_{1}^{(1)} ](t,x), 
			\end{align}
			where $u^{(1)}_{2}(t,x)=\mathcal{T}_{2}[u^{(0)}_{1},u_{2}^{(0)} ](t,x)$.
			We know that $u_{2}^{(0)}(t,x) \leq e^{\lambda t}S_{t} f_{1}(x)\mathscr{G}_{1}(t)$ and $ u_{1}^{(0)}(t,x) \leq e^{\lambda t}S_{t}f_{1}(x) \mathscr{G}_{1}(t)$. 
			Therefore, we infer 
			\begin{align}\label{9}
			u^{(1)}_{2}(t,x) \leq e^{\lambda t}S_{t} f_{1}(x)\mathscr{G}_{1}(t).
			\end{align}
			By using \eqref{9} and \eqref{2} in \eqref{3}, we have
			\begin{align}\label{4}
			e^{\lambda t}S_{t} f_{1}(x)\leq u^{(2)}_{1}(t,x) \leq e^{\lambda t}S_{t} f_{1}(x)\mathscr{G}_{1}(t).
			\end{align}
			From \eqref{in2}, we get 
			\begin{align}\label{5}
			u^{(2)}_{2}(t,x)=\mathcal{T}_{2}[u^{(1)}_{1},u_{2}^{(1)} ](t,x), 
			\end{align}
			where $u^{(1)}_{1}(t,x)=\mathcal{T}_{1}[u^{(0)}_{1},u_{2}^{(0)} ](t,x)$.		
			We know that $u^{(0)}_{1}(t,x) \leq e^{\lambda t}S_{t} f_{2}(x)\mathscr{G}_{2}(t)$ and $u_{2}^{(0)}(t,x) \leq e^{\lambda t}S_{t}f_{2}(x) \mathscr{G}_{2}(t)$.		
			Therefore, we arrive at  $0 \leq u^{(1)}_{1}(t,x) \leq e^{\lambda t}S_{t}f_{2}(x) \mathscr{G}_{2}(t).$ 	
			From \eqref{5}, we obtain
			\begin{align}\label{6}
			e^{\lambda t}S_{t} f_{1}(x)\leq u^{(2)}_{2}(t,x) \leq e^{\lambda t}S_{t} f_{2}(x)\mathscr{G}_{2}(t).
			\end{align}
			Continuing like this,  for each $n \geq 1$, one can deduce that 
			\begin{align}
			e^{\lambda t}S_{t} f_{1}(x)&\leq u^{(n)}_{1}(t,x) \leq e^{\lambda t}S_{t} f_{1}(x)\mathscr{G}_{1}(t), \nonumber\\
			e^{\lambda t}S_{t} f_{2}(x)&\leq u^{(n)}_{2}(t,x) \leq e^{\lambda t}S_{t} f_{2}(x)\mathscr{G}_{2}(t). \nonumber
			\end{align}
			Moreover,  we have
			\begin{align} 
				v_{1}(t,x)&= \mathcal{T}_{1}\left[v_2,v_1 \right](t,x)\leq \exp\{{\lambda t}\}S_{t}f_{1}(x) \mathscr{G}_{1}(t)\ \text{ and }\nonumber\\ v_{2}(t,x)&=\mathcal{T}_{2}\left[ v_1,v_2 \right](t,x)\leq \exp\{{\lambda t}\}S_{t}f_{2}(x) \mathscr{G}_{2}(t), \nonumber
			\end{align}
			so that
			\begin{align}
				v_{1}(t,x) &\leq \frac{e^{\lambda t}S_{t}f_{1}(x)}{\left[ 1-\left(\beta_{1}+\gamma_{2}+1\right) \int_{0}^{t} e^{{\rho_{1} W_{r}}} \left( \left\| e^{{\lambda r}}S_{r}f_{1} \right\|_{\infty}^{\beta_{1}}+\left\| e^{\lambda r}S_{r}f_{1} \right\|_{\infty}^{\gamma_{2}}\right) dr\right]^{\frac{1}{\beta_{1}+\gamma_{2}+1}}}, \nonumber\\
				v_{2}(t,x) &\leq \frac{e^{\lambda t}S_{t}f_{2}(x)}{\left[ 1-\left(\gamma_{1}+\beta_{2}+1\right) \int_{0}^{t} e^{\rho_{2} W_{r}} \left( \left\| e^{\lambda r}S_{r}f_{2} \right\|_{\infty}^{\gamma_{1}}+\left\| e^{\lambda r}S_{r}f_{2} \right\|_{\infty}^{\beta_{2}}\right)dr\right]^{\frac{1}{\gamma_{1}+\beta_{2}+1}}}. \nonumber
			\end{align}
			By the choice of initial values as in (\ref{q1}), the proof of the theorem follows.
		\end{proof}
	\end{theorem}
	
	\begin{Rem} 
		For  general bounded, measurable and positive $f_{i}, \ i=1,2,$ one can show that the blow-up time of \eqref{b1}-\eqref{b2} is bounded below by the random time 
		\begin{align}
			\inf \Bigg\{ &t\geq 0 : \int_{0}^{t} \exp\{{\rho_{1}W_{r}}\}\left( \left\| \exp\{{\lambda r}\}S_{r}f_{1} \right\|_{\infty}^{\beta_{1}}+\left\| \exp\{{\lambda r}\}S_{r}f_{1} \right\|_{\infty}^{\gamma_{2}} \right)dr \geq \left(\beta_{1}+\gamma_{2}+1 \right)^{-1} \nonumber\\
			&\mbox{ or }\int_{0}^{t} \exp\{{\rho_{2}W_{r}}\} \left( \left\| \exp\{{\lambda r}\}S_{r}f_{2} \right\|_{\infty}^{\gamma_{1}}+\left\| \exp\{{\lambda r}\}S_{r}f_{2} \right\|_{\infty}^{\beta_{2}} \right)dr \geq \left(\gamma_{1}+\beta_{2}+1 \right)^{-1} \Bigg\}, \nonumber 
		\end{align} 
		which coincides with $\tau_{\ast},$ when the initial values satisfy (\ref{q1}).
	\end{Rem}
		
	\section{Upper Bound}\label{sec4}
	In this section, we obtain the upper bound $\tau^*$ for the blow-up time $\tau$, under the assumption \eqref{a4} and for any initial values $f_{i} \geq 0, \ i=1,2$. By setting $\varphi_{i}=\psi$, the system (\ref{c3}) reduces to
	\begin{align} \label{a5}
		v_{i}(t,\psi) &= v_{i}(0,\psi) + \int_{0}^{t} \left[ v_{i}(s,(\Delta + V_{i})\psi) - \frac{k_{i}^{2}}{2} v_{i}(s,\psi) \right] ds \nonumber\\
		&\quad+ \int_{0}^{t} \exp\{{-k_{i} W_{s}}\} (\exp\{{k_{i} W_{s}}\} v_{i})^{1+\beta_{i}}(s,\psi) ds\nonumber\\
		&\quad+ \int_{0}^{t} \exp\{{-k_{i} W_{s}}\} (\exp\{{k_{j} W_{s}}\} v_{j})^{1+\gamma_{j}}(s,\psi) ds. 
	\end{align}
	Next, we consider the integral $\displaystyle\int_{0}^{t} \left[ v_{i}(s,(\Delta + V_{i})\psi) - \frac{k_{i}^{2}}{2} v_{i}(s,\psi) \right] ds,$ and using (\ref{a3}) and (\ref{a4}), we have
	\begin{align}
		&	\int_{0}^{t} \left[ v_{i}(s,(\Delta + V_{i})\psi) - \frac{k_{i}^{2}}{2} v_{i}(s,\psi) \right] ds \nonumber\\&= \int_{0}^{t} \int_{D} \left[v_{i} (s,x)(\Delta+V_{i})\psi(x) -\frac{k_{i}^{2}}{2}v_{i}(s,x)\psi(x)\right]dxds \nonumber\\
		%&=& \int_{0}^{t} \int_{D} v_{i} (s,x)\Big[\Delta+V_{i}-\frac{k_{i}^{2}}{2}\Big]\psi(x)dxds\nonumber\\
		&=\int_{0}^{t} \int_{D} v_{i} (s,x) \Big[\Delta+\lambda \Big] \psi(x)dxds 
		= 0. \nonumber
	\end{align}
	Therefore, (\ref{a5}) becomes
	\begin{align}
		\frac{dv_{i}(t,\psi)}{dt} &= \exp{\{-k_{i} W_{t}\}}(\exp\{k_{i} W_{t}\}v_{i})^{1+\beta_{i}} (t,\psi)+\exp\{-k_{i} W_{t}\}(\exp\{k_{j} W_{t}\} v_{j})^{1+\gamma_{j}}(t,\psi), \nonumber
	\end{align}
	for $i=1,2, \ j\in \left\{ 1,2 \right\} / \left\{i\right\}. $
	By using Jensen's inequality, we obtain 
	\begin{align*}
	(\exp\{{k_{i} W_{s}}\} v_{i})^{1+\beta_{i}}(s,\psi) &\geq \left[ \int_{D} (\exp\{ k_{i} W_{s}\}v_{i}(s,x))\psi(x)dx\right]^{1+\beta_{i}} \\&=\exp\{(1+\beta_{i})k_{i}W_{s}\}v_{i}(s,\psi)^{1+\beta_{i}},\nonumber
	\end{align*}
	and  
	\begin{eqnarray}
	(\exp\{{k_{j} W_{s}}\} v_{j})^{1+\gamma_{j}}(s,\psi) \geq \exp\{(1+\gamma_{j})k_{j}W_{s}\}v_{j}(s,\psi)^{1+\gamma_{j}}.\nonumber
	\end{eqnarray}
	Thus, it is immediate that  for $ i=1,2, \ j\in \left\{ 1,2 \right\} / \left\{i\right\}$
	\begin{align}
	\frac{dv_{i}(t,\psi)}{dt} &\geq e^{-k_{i} W_{t}+(1+\beta_{i})k_{i} W_{t}} {v_{i} (t,\psi)}^{1+\beta_{i}}+e^{-k_{i} W_{t}+(1+\gamma_{j})k_{j} W_{t}} {v_{j} (t,\psi)}^{1+\gamma_{j}}. \nonumber
	\end{align}
	In this way, $v_{i}(t,\psi) \geq h_{i}(t),$ for $ i=1,2,$ where
	\begin{equation} \label{ab2}
	\left\{
	\begin{aligned}
	\frac{dh_{1}(t)}{dt}&=\exp\{{\rho_{1} W_{t}}\} \left[ h_{1}^{1+\beta_{1}}(t)+h_{2}^{1+\gamma_{2}}(t) \right], \\ \frac{dh_{2}(t)}{dt}&=\exp\{{\rho_{2} W_{t}}\} \left[ h_{1}^{1+\gamma_{1}}(t)+h_{2}^{1+\beta_{2}}(t) \right],\\
	h_{i}(0)&=v_{i}(0,\psi), \ i=1,2.  
	\end{aligned}
	\right.
	\end{equation}
%	and 
%	\begin{align}{\label{rho1}} 
%	(1+\beta_{1})k_{1}-k_{1}&=(1+\gamma_{2})k_{2}-k_{1} :=\rho_{1} ,\
%	(1+\gamma_{1})k_{1}-k_{2}=(1+\beta_{2})k_{2}-k_{2}:=\rho_{2}.  
%	\end{align} 
	Let  us define $E(t):=h_{1}(t)+h_{2}(t), \ t\geq 0,$ so that $E(\cdot)$ satisfies 
	\begin{equation} \label{a6}
			\left\{
		\begin{aligned}
		\frac{dE(t)}{dt}&=\exp\{{\rho_{1} W_{t}}\} \left[ h_{1}^{1+\beta_{1}}(t)+h_{2}^{1+\gamma_{2}}(t) \right] + \exp\{{\rho_{2} W_{t}}\} \left[ h_{1}^{1+\gamma_{1}}(t)+h_{2}^{1+\beta_{2}}(t) \right], \\
		E(0)&=\int_D(f_1(x)+f_2(x))\psi (x)dx. 
		\end{aligned}
		\right. 
	\end{equation}
	\begin{theorem} \label{thm4.1} 
		\indent \\
		1. Assume that $\beta_{1}=\gamma_{2}=\gamma_{1}=\beta_{2}=m
		\ (say)>0$. Then for  $\rho=\rho_{1}=\rho_{2}$ (see\eqref{a4}),  $\tau \leq \tau^{\ast}$, where
		\begin{eqnarray}
		\tau^{\ast}=\inf \left\lbrace t\geq 0 : \int_{0}^{t} \exp\{{\rho W_{s}}\} ds \geq 2^{m}{m}^{-1}E^{-m}(0) \right\rbrace .\nonumber
		\end{eqnarray}  
		2. For $\beta_{1}=\gamma_{1}=\beta, \ \gamma_{2}=\beta_{2}=\gamma$ with $\beta>\gamma>0$, let $D_{1}=\left( \frac{\beta-\gamma}{1+\beta} \right)\left(\frac{1+\beta}{1+\gamma} \right)^{\frac{1+\gamma}{\beta-\gamma}}$ and
		\begin{align}\label{nca3}
		\epsilon_{0}&\leq \min \Bigg\{ 1, \left( h_{1}(0)/D_{1}^{1/1+\gamma} \right)^{\beta-\gamma} \Bigg\}.
		\end{align} 
		Assume that 
		\begin{align}\label{nca4}
		2^{-(1+\gamma)}\epsilon_{0}E^{1+\gamma}(0)\geq \epsilon_{0}^{\frac{1+\beta}{\beta-\gamma}}D_{1}.
		\end{align}
		Then $\tau\leq\tau^{\ast}$, where 
		\begin{eqnarray}
		\tau^{\ast} = \inf \left\lbrace  t\geq 0 : \int_{0}^{t} \left( \exp\{\rho_{1} W_{s}\} \wedge \exp\{\rho_{2} W_{s}\} \right)  ds \geq {\left[\gamma E^{\gamma}(0)\left(\frac{\epsilon_0}{2^{1+\gamma}}-\frac{ \epsilon_{0}^{\frac{1+\beta}{\beta-\gamma}}D_{1}}{E^{1+\gamma}(0)}\right)\right]^{-1}}\right\rbrace, \nonumber
		\end{eqnarray}
	and  $\rho_1,\rho_2$ are defined in \eqref{a4}.\\
		3. Let $\beta_{1}>\gamma_{2}>\gamma_{1}>\beta_{2}>0$ and let $D_{2}=\frac{\beta_{1}-\gamma_{2}}{1+\beta_{1}}\left( \frac{1+\beta_{1}}{1+\gamma_{2}} \right)^{\frac{1+\gamma_{2}}{\beta_{1}-\gamma_{2}}}, \ D_{3}=\frac{\gamma_{1}-\beta_{2}}{1+\gamma_{1}}\left( \frac{1+\gamma_{1}}{1+\beta_{2}} \right)^{\frac{1+\beta_{2}}{\gamma_{1}-\beta_{2}}},\\  D_{4}=\frac{\gamma_{2}-\beta_{2}}{1+\gamma_{2}}\left( \frac{1+\gamma_{2}}{1+\beta_{2}} \right)^{\frac{1+\beta_{2}}{\gamma_{2}-\beta_{2}}}$ 
		and 
		\begin{align}\label{4.4}
		\epsilon_{0}&\leq\min \Bigg\{ 1, \left( h_{1}(0)/ D_{2}^{1/1+\gamma_{2}} \right)^{\beta_{1}-\gamma_{2}}, \left( h_{1}(0)/D_{3}^{1/1+\beta_{2}}\right)^{\gamma_{1}-\beta_{2}}\Bigg\}.
		\end{align} 
		Assume that 
		\begin{eqnarray} \label{tm2}
		2^{-(1+\gamma_{2})}\epsilon_{0}E^{1+\beta_{2}}(0)\geq \epsilon_{0}^{\frac{1+\beta_{1}}{\beta_{1}-\gamma_{2}}} D_{2} + \epsilon_{0}^{\frac{1+\gamma_{1}}{\gamma_{1}-\beta_{2}}}D_{3}+\epsilon_{0}^{\frac{ 1+\gamma_{2}}{\gamma_{2}-\beta_{2}}}D_{4}.
		\end{eqnarray}
		Then $\tau\leq\tau^{\ast}$, where 
		\begin{align*}
		\tau^{\ast} = \inf \Bigg\{  t\geq 0 : \int_{0}^{t} &\left( \exp\{\rho_{1} W_{s}\} \wedge \exp\{\rho_{2} W_{s}\} \right)  ds \geq \\
		&\left. \left(\beta_{2}E^{\beta_{2}}(0)\left[ \frac{\epsilon_{0}}{2^{(1+\gamma_{2})}} -\frac{\left(  \epsilon_{0}^{\frac{1+\beta_{1}}{\beta_{1}-\gamma_{2}}} D_{2} + \epsilon_{0}^{\frac{1+\gamma_{1}}{\gamma_{1}-\beta_{2}}}D_{3}+\epsilon_{0}^{\frac{ 1+\gamma_{2}}{\gamma_{2}-\beta_{2}}}D_{4}\right)}{E^{1+\beta_{2}}(0)}\right]\right)^{-1}\right\}, \nonumber
		\end{align*}
		and $\rho_1,\rho_2$ are defined in \eqref{a4}.	
			
		\begin{Rem}
		\textcolor{blue}{Note that \eqref{nca4} and (\ref{tm2}) follow from the conditions 
		\begin{eqnarray}
		\epsilon_{0}^{\frac{1+\gamma}{\beta-\gamma}}\leq E^{1+\gamma}(0)2^{-(1+\gamma)}D_{1}^{-1}, \nonumber 
		\end{eqnarray} and		
		\begin{eqnarray} \label{RE1}
		\epsilon_{0}^{\min\{ M,N \}} \leq (D_{2}+D_{3}+D_{4})^{-1} 2^{-(1+\gamma_{2})}E^{1+\beta_{2}}(0),
		\end{eqnarray}
		where $M= \frac{1+\gamma_{2}}{\beta_{1}-\gamma_{2}}, N= \frac{1+\beta_{2}}{\gamma_{2}-\beta_{2}}$. Using the fact that
		$\epsilon_0\geq\epsilon_0^{a_1}A_1+ \epsilon_0^{a_2}A_2+\epsilon_0^{a_3}A_3$ 
		is satisfied provided
		$\epsilon_0\geq\epsilon_0^{\min\{a_1,a_2,a_3\}}(A_1+A_2+A_3),$ since $$\epsilon_0^{\min\{a_1,a_2,a_3\}}(A_1+A_2+A_3)\geq \epsilon_0^{a_1}A_1+ \epsilon_0^{a_2}A_2+\epsilon_0^{a_3}A_3,$$ for each positive constants $A_1,A_2,A_3,a_1,a_2,a_3$ and $0<\epsilon_{0}<1$.
		The inequality \eqref{RE1} is readily obtained by replacing $A_1,A_2,A_3,a_1, a_2,a_3$ by $D_2,D_3,D_4, M,N,N,$ respectively in the last inequality, so that we deduce 
		$$2^{-(1+\gamma_{2})}E^{1+\beta_{2}}(0)\geq \epsilon_{0}^{\frac{1+\gamma_{2}}{\beta_{1}-\gamma_{2}}} D_{2} +(D_{3}+D_{4}) \epsilon_{0}^{\frac{1+\beta_{2}}{\gamma_{2}-\beta_{2}}}.$$
		Note that $\gamma_{2}>\gamma_{1}>\beta_{2}>0$ and hence the above inequality becomes
		$$2^{-(1+\gamma_{2})}E^{1+\beta_{2}}(0)\geq \epsilon_{0}^{\frac{1+\gamma_{2}}{\beta_{1}-\gamma_{2}}} D_{2} + \epsilon_{0}^{\frac{1+\beta_{2}}{\gamma_{1}-\beta_{2}}}D_{3}+\epsilon_{0}^{\frac{ 1+\beta_{2}}{\gamma_{2}-\beta_{2}}}D_{4}.$$
		By multiplying both sides of the above inequality by $\epsilon_0$, we arrive at (\ref{tm2}).} One can refer to Theorem 3.1 in \cite{chow11} and \cite{doz2013} for similar conditions.
		\end{Rem}

		\begin{proof}[Proof of Theorem \ref{thm4.1}] 
		\textbf{Case 1:} Suppose that if $\beta_{1}=\gamma_{1}=\gamma_{2}=\beta_{2}=m \ (say)>0$ in (\ref{a4}). Then, we get, $\rho_{1}=\rho_{2}=\rho \ (\mbox{say})$ and 
		\begin{eqnarray} \label{a7}
		\frac{dE(t)}{dt}=2\exp\{{\rho W_{t}}\} \left[ h_{1}^{1+m}(t)+h_{2}^{1+m}(t) \right].
		\end{eqnarray}
		By substituting $a=1, \ b=\frac{h_{1}}{h_{2}}\ \mbox{and}\ n_{1}=1+m$ into the inequality 
		\begin{eqnarray}\label{cta1}
		a^{n_{1}}+b^{n_{1}}\geq 2^{-n_{1}}{(a+b)}^{n_{1}},
		\end{eqnarray}
		which is valid for $a, \ b\geq0,$ we obtain 
		\begin{eqnarray}
		&&h_{1}^{1+m}+h_{2}^{1+m} \geq 2^{-(1+m)} \left( h_{1}+h_{2} \right)^{1+m} \geq 2^{-(1+m)} E^{1+m}(t).\nonumber
		\end{eqnarray}
		From $\left( \ref{a7} \right) $, we infer that 
		%\begin{align*}
		%\frac{dE(t)}{dt}\geq 2^{-m} \exp\{{\rho W_{t}}\} \left[ h_{1}(t)+h_{2}(t) \right]^{1+m},
		%\end{align*}
		%and 
		\begin{align*}
		\frac{dE(t)}{dt}\geq 2^{-m} \exp\{{\rho W_{t}}\} E^{1+m}(t).\nonumber
		\end{align*} 
		Thus, $E(t)$ blows up not later than the solution $I(t)$ of the equation
		\begin{align}
		\frac{dI(t)}{dt}= 2^{-m} \exp\{{\rho W_{t}}\} I(t)^{1+m}, \ I(0)=E(0),\nonumber
		\end{align}
		which immediately  gives 
		\begin{align}
		I(t)^{-m}=I(0)^{-m}-m2^{-m}\int_{0}^{t} \exp\{{\rho W_{s}}\} ds. \nonumber
		\end{align} 
		For the above equation, the blow-up time is given by
		\begin{align*}
		\tau^{\ast} = \inf \left\lbrace t\geq 0 : \int_{0}^{t} \exp\{{\rho W_{s}}\} ds\geq 2^{m} {m}^{-1}E^{-m}(0) \right\rbrace. 
		\end{align*}
	Since $I(\cdot)\leq E(\cdot)\leq v_{1}(\cdot,\psi)+v_{2}(\cdot,\psi)$, $\tau^*$ is an upper bound for the blow-up time for $v(\cdot,\psi)=(v_1(\cdot,\psi),v_2(\cdot,\psi))^{\top}$. 
		
		\vskip 0.2 cm
		\noindent 
		\textbf{Case 2:} Suppose $\beta_{1}=\gamma_{1}=\beta, \ \beta_{2}=\gamma_{2}=\gamma$ with $\beta>\gamma>0,$ then we have 
		\begin{align} \label{nca1}
		\frac{dE(t)}{dt}&=\exp\{{\rho_{1} W_{t}}\} \left[ h_{1}^{1+\beta}(t)+h_{2}^{1+\gamma}(t) \right]+ \exp\{{\rho_{2} W_{t}}\} \left[ h_{1}^{1+\beta}(t)+h_{2}^{1+\gamma}(t) \right]\nonumber\\
		&\geq \left( \exp\{\rho_{1} W_{t}\} \wedge \exp\{\rho_{2} W_{t}\} \right)   \left[ h_{1}^{1+\beta}(t)+h_{2}^{1+\gamma}(t) \right].
		\end{align}
		The Young inequality states that if $1<b<\infty$, $\delta>0$ and $a=\frac{b}{b-1}$, then 
		\begin{eqnarray} \label{na10}
		xy\leq \frac{\delta^{a}x^{a}}{a}+\frac{\delta^{-b}y^{b}}{b}, \ x,y\geq 0. 
		\end{eqnarray}
		By setting $b=\frac{1+\beta}{1+\gamma}, \ y=h_{1}^{1+\gamma}(t), \ x=\epsilon, \ \delta=\left( \frac{1+\beta}{1+\gamma} \right)^{\frac{1+\gamma}{1+\beta}}$ and using the fact that $\gamma<\beta$ in (\ref{na10}), it follows that for any $\epsilon>0,$
		\begin{eqnarray} 
		h_{1}^{1+\beta}(t) \geq \epsilon h_{1}^{1+\gamma}(t)-D_{1}\epsilon^{\frac{1+\beta}{\beta-\gamma}}\nonumber.
		\end{eqnarray}
		Since $\epsilon_0$ is the minimum of the quantities given in \eqref{nca3}, in particular, we have $\epsilon_0\leq ( h_{1}(0)/D_{1}^{1/1+\gamma})^{\beta-\gamma},$ so that 
		\begin{eqnarray}
		\epsilon_{0} h_{1}^{1+\gamma}(0)-D_{1}\epsilon_{0}^{\frac{1+\beta}{\beta-\gamma}}\geq 0\nonumber.
		\end{eqnarray}
		From $\eqref{nca1},$ we have
		\begin{align}\label{nca2}
		\frac{dE(t)}{dt}\geq \left( \exp\{\rho_{1} W_{t}\} \wedge \exp\{\rho_{2} W_{t}\} \right)  \left[ \epsilon_{0} h_{1}^{1+\gamma}(t)+h_{2}^{1+\gamma}(t)-D_{1}\epsilon_{0}^{\frac{1+\beta}{\beta-\gamma}} \right].
		\end{align}
		Utilizing the inequality \eqref{cta1} again with $n_{1}=1+\gamma, \ a=1$ and $b=\epsilon_{0}^\frac{1}{1+\gamma}\frac{h_{1}(t)}{h_{2}(t)}$, and by using \eqref{nca3}, we see that 
		\begin{eqnarray}
		h_{2}^{1+\gamma}(t) + \epsilon_{0} h_{1}^{1+\gamma}(t) \geq 2^{-(1+\gamma)} \left( h_{2}(t)+\epsilon_{0}^\frac{1}{1+\gamma}h_{1}(t) \right)^{1+\gamma}\geq  2^{-(1+\gamma)}\epsilon_{0}E^{1+\gamma}(t)\nonumber. 
		\end{eqnarray}
		From $\eqref{nca2},$ we deduce that
		\begin{align}
		\frac{dE(t)}{dt}\geq \left( \exp\{\rho_{1} W_{t}\} \wedge \exp\{\rho_{2} W_{t}\} \right)  \left[2^{-(1+\gamma)}\epsilon_{0}E^{1+\gamma}(t)-D_{1}\epsilon_{0}^{\frac{1+\beta}{\beta-\gamma}} \right].\nonumber
		\end{align}
		Note that the condition \eqref{nca4} gives $E(t)\geq E(0)>0$ and so
		\begin{align}
		\frac{dE(t)}{E^{1+\gamma}(t)}\geq \left( \exp\{\rho_{1} W_{t}\} \wedge \exp\{\rho_{2} W_{t}\} \right)  \left[\frac{\epsilon_{0}}{2^{1+\gamma}}-\frac{D_{1}\epsilon_{0}^{\frac{1+\beta}{\beta-\gamma}}}{E^{1+\gamma}(0)} \right]dt.\nonumber
		\end{align}
		Thus, $E(t)$ blows up not later than the solution $I(t)$ of the equation
		\begin{align}
		\frac{dI(t)}{I^{1+\gamma}(t)}=\left( \exp\{\rho_{1} W_{t}\} \wedge \exp\{\rho_{2} W_{t}\} \right)  \left[\frac{\epsilon_{0}}{2^{1+\gamma}}-\frac{D_{1}\epsilon_{0}^{\frac{1+\beta}{\beta-\gamma}}}{I^{1+\gamma}(0)} \right]dt, \ I(0)=E(0),\nonumber
		\end{align}
		and
		\begin{align*}
		I(t)= \left\lbrace E^{-\gamma}(0)-\gamma\left[ \frac{\epsilon_0}{2^{1+\gamma}}-\frac{ \epsilon_{0}^{\frac{1+\beta}{\beta-\gamma}}D_{1} }{E^{1+\gamma}(0)} \right] \displaystyle\int_{0}^{t} \left( \exp\{\rho_{1} W_{s}\} \wedge \exp\{\rho_{2} W_{s}\} \right)  ds \right\rbrace^{\frac{1}{-\gamma}}.
		\end{align*}
		For the above inequality, the blow-up time is given by
		\begin{eqnarray}
		\tau^{\ast} = \inf \left\lbrace  t\geq 0 : \int_{0}^{t} \left( \exp\{\rho_{1} W_{s}\} \wedge \exp\{\rho_{2} W_{s}\} \right) ds \geq {\left[\gamma E^{\gamma}(0)\left(\frac{\epsilon_0}{2^{1+\gamma}}-\frac{ \epsilon_{0}^{\frac{1+\beta}{\beta-\gamma}}D_{1}}{E^{1+\gamma}(0)}\right)\right]^{-1}}\right\rbrace. \nonumber
		\end{eqnarray}

			\vskip 0.2 cm
			\noindent 
			\textbf{Case 3:} Suppose if $\beta_{1}>\gamma_{2}>\gamma_{1}>\beta_{2}>0$, then
%			\begin{eqnarray} \label{a16}
%				\frac{dE(t)}{dt}=\exp\{{\rho_{1} W_{t}}\} \left[ h_{1}^{1+\beta_{1}}(t)+h_{2}^{1+\gamma_{2}}(t) \right]+ \exp\{{\rho_{2} W_{t}}\} \left[ h_{1}^{1+\gamma_{1}}(t)+h_{2}^{1+\beta_{2}}(t) \right].
%			\end{eqnarray}
			%The Young inequality that if 
			%$1<b<\infty$, $\delta>0$, and $a=\frac{b}{b-1}$, then 
			%\begin{eqnarray} \label{a10}
				%xy\leq \frac{\delta^{a}x^{a}}{a}+\frac{\delta^{-b}y^{b}}{b}, \ x,y\geq 0. 
			%\end{eqnarray}
			by setting $b=\frac{1+\beta_{1}}{1+\gamma_{2}}, \ y=h_{1}^{1+\gamma_{2}}(t), \ x=\epsilon, \ \delta=\left( \frac{1+\beta_{1}}{1+\gamma_{2}} \right)^{\frac{1+\gamma_{2}}{1+\beta_{1}}}$ and using the fact that $\gamma_{2}<\beta_{1},$ in (\ref{na10}), it follows that for any $\epsilon>0,$
			\begin{eqnarray} \label{a11}
			h_{1}^{1+\beta_{1}}(t) \geq \epsilon h_{1}^{1+\gamma_{2}}(t)- D_{2}\epsilon^{\frac{1+\beta_{1}}{\beta_{1}-\gamma_{2}}}.
			\end{eqnarray}
			Since $\epsilon_0$ is the minimum of the quantities given in \eqref{4.4}, in particular, we have $\epsilon_0\leq ( h_{1}(0)/ D_{2}^{1/1+\gamma_{2}})^{\beta_{1}-\gamma_{2}},$ so that 
			\begin{eqnarray} \label{a12}
				\epsilon_{0} h_{1}^{1+\gamma_{2}}(0)- D_{2}\epsilon_{0}^{\frac{1+\beta_{1}}{\beta_{1}-\gamma_{2}}}\geq 0.
			\end{eqnarray}
			From (\ref{a11}), one can deduce that 
			\begin{eqnarray}
				h_{1}^{1+\beta_{1}}(t)+h_{2}^{1+\gamma_{2}}(t)\geq h_{2}^{1+\gamma_{2}}(t)+\epsilon_{0}h_{1}^{1+\gamma_{2}}(t)-\epsilon_{0}^{\frac{1+\beta_{1}}{\beta_{1}-\gamma_{2}}}D_{2}. \nonumber
			\end{eqnarray}
			Utilizing the inequality \eqref{cta1} again with $n_{1}=\gamma_{2}+1,a=1$ and $b=\epsilon_{0}^\frac{1}{1+\gamma_{2}}\frac{h_{1}(t)}{h_{2}(t)}$ and  using \eqref{4.4}, we see that 
			\begin{eqnarray}{\label{Ras1}}
				h_{2}^{1+\gamma_{2}}(t) + \epsilon_{0} h_{1}^{1+\gamma_{2}}(t) \geq 2^{-(1+\gamma_{2})} \left( h_{2}(t)+\epsilon_{0}^\frac{1}{1+\gamma_{2}}h_{1}(t) \right)^{1+\gamma_{2}}\geq  2^{-(1+\gamma_{2})}\epsilon_{0}E^{1+\gamma_{2}}(t). 
			\end{eqnarray}
			Therefore, it is immediate that 
			\begin{align*}
				h_{1}^{1+\beta_{1}}(t)+h_{2}^{1+\gamma_{2}}(t)\geq 2^{-(1+\gamma_{2})}\epsilon_{0}E^{1+\gamma_{2}}(t)-\epsilon_{0}^{\frac{1+\beta_{1}}{\beta_{1}-\gamma_{2}}} D_{2}. 
			\end{align*}
			Likewise, we derive
			\begin{eqnarray}
				h_{1}^{1+\gamma_{1}}(t)+h_{2}^{1+\beta_{2}}(t)\geq 2^{-(1+\beta_{2})}\epsilon_{0}E^{1+\beta_{2}}(t)-\epsilon_{0}^{\frac{1+\gamma_{1}}{\gamma_{1}-\beta_{2}}} D_{3}. \nonumber
			\end{eqnarray}
			Therefore (\ref{a6}) becomes 
			\begin{align*}
				\frac{dE(t)}{dt}&\geq \exp\{{\rho_{1} W_{t}}\} \left[ 2^{-(1+\gamma_{2})} \epsilon_{0}E^{1+\gamma_{2}}(t)-\epsilon_{0}^{\frac{1+\beta_{1}}{\beta_{1}-\gamma_{2}}}D_{2} \right]\nonumber\\&\quad+ \exp\{{\rho_{2} W_{t}}\} \left[ 2^{-(1+\beta_{2})} \epsilon_{0}E^{1+\beta_{2}}(t)-\epsilon_{0}^{\frac{1+\gamma_{1}}{\gamma_{1}-\beta_{2}}}D_{3} \right].
			\end{align*}
			Since, it is clear that 
			\begin{align*}
			\frac{dE(t)}{dt}&\geq \left( \exp\{\rho_{1} W_{t}\} \wedge \exp\{\rho_{2} W_{t}\} \right) \left[2^{-(1+\gamma_{2})} \epsilon_{0}\left(E^{1+\gamma_{2}}(t) +E^{1+\beta_{2}}(t) \right) -(\epsilon_{0}^{\frac{1+\beta_{1}}{\beta_{1}-\gamma_{2}}}D_{2} + \epsilon_{0}^{\frac{1+\gamma_{1}}{\gamma_{1}-\beta_{2}}}D_{3})\right].	
%			&\geq \left( \exp\{\rho_{1} W_{t}\} \wedge \exp\{\rho_{2} W_{t}\} \right) \left[\frac{\epsilon_0}{2^{\gamma_{2}}}E^{1+\beta_{2}}(t) -(\epsilon_{0}^{\frac{1+\beta_{1}}{\beta_{1}-\gamma_{2}}}D_{2} + \epsilon_{0}^{\frac{1+\gamma_{1}}{\gamma_{1}-\beta_{2}}}D_{3})\right],
\end{align*}
	By setting $b=\frac{1+\gamma_{2}}{1+\beta_{2}}, \ y=E^{1+\beta_{2}}(t), \ x=\epsilon, \ \delta=\left( \frac{1+\gamma_{2}}{1+\beta_{2}} \right)^{\frac{1+\beta_{2}}{1+\gamma_{2}}}$ and using the fact that $\gamma_{2}>\beta_{2},$ in (\ref{na10}), it follows that for any $\epsilon>0,$	
	\begin{align}
	E^{1+\gamma_{2}}(t) \geq \epsilon E^{1+\beta_{2}}(t)-D_{4} \epsilon^{\frac{ 1+\gamma_{2}}{\gamma_{2}-\beta_{2}}}.
	\end{align}
		Since $\epsilon_0$ is the minimum of the quantities given in \eqref{tm2}, in particular, we have $\epsilon_0\leq ( E(0)/D_{4}^{1/1+\beta_{2}})^{\gamma_{2}-\beta_{2}}.$ Note that the condition \eqref{tm2} gives $E(t)\geq E(0)>0$, which implies 
		\begin{align*}
		\frac{dE(t)}{E^{1+\beta_{2}}(t)}&\geq \left( \exp\{\rho_{1} W_{t}\} \wedge \exp\{\rho_{2} W_{t}\} \right) \left[ \frac{\epsilon_{0}}{2^{(1+\gamma_{2})}} -\frac{\left(  \epsilon_{0}^{\frac{1+\beta_{1}}{\beta_{1}-\gamma_{2}}} D_{2} + \epsilon_{0}^{\frac{1+\gamma_{1}}{\gamma_{1}-\beta_{2}}}D_{3}+\epsilon_{0}^{\frac{ 1+\gamma_{2}}{\gamma_{2}-\beta_{2}}}D_{4}\right)}{E^{1+\beta_{2}}(0)}\right]dt,
		\end{align*}
		and 
			\begin{align*}
			E^{\beta_{2}}(t)\geq \frac{E^{\beta_{2}}(0)}{1-\beta_{2}E^{\beta_{2}}(0)\left[ \frac{\epsilon_{0}}{2^{(1+\gamma_{2})}} -\frac{\left(  \epsilon_{0}^{\frac{1+\beta_{1}}{\beta_{1}-\gamma_{2}}} D_{2} + \epsilon_{0}^{\frac{1+\gamma_{1}}{\gamma_{1}-\beta_{2}}}D_{3}+\epsilon_{0}^{\frac{ 1+\gamma_{2}}{\gamma_{2}-\beta_{2}}}D_{4}\right)}{E^{1+\beta_{2}}(0)}\right] \displaystyle\int_{0}^{t} \left( e^{\rho_{1} W_{s}} \wedge e^{\rho_{2} W_{s}} \right) ds}.
			\end{align*}
			For the above inequality, the blow-up time is given by
			\begin{align}
			\tau^{\ast} = \inf \Bigg\{  t\geq 0 : \int_{0}^{t} &\left( \exp\{\rho_{1} W_{s}\} \wedge \exp\{\rho_{2} W_{s}\} \right) ds \geq \nonumber\\
			&\left. {\left(\beta_{2}E^{\beta_{2}}(0)\left[ \frac{\epsilon_{0}}{2^{(1+\gamma_{2})}} -\frac{\left(  \epsilon_{0}^{\frac{1+\beta_{1}}{\beta_{1}-\gamma_{2}}} D_{2} + \epsilon_{0}^{\frac{1+\gamma_{1}}{\gamma_{1}-\beta_{2}}}D_{3}+\epsilon_{0}^{\frac{ 1+\gamma_{2}}{\gamma_{2}-\beta_{2}}}D_{4}\right)}{E^{1+\beta_{2}}(0)}\right]\right)^{-1}}\right\}, \nonumber 
			\end{align}		
		which completes the proof. 
		\end{proof}
	\end{theorem}

	\section{A More General Case}\label{sec5}
	We consider the system \eqref{b1}-\eqref{b2} with  the assumptions $\beta_{1},\beta_{2},\gamma_{1},\gamma_{2}>0$ and $V_{i}=\lambda+\frac{k^{2}_{i}}{2}, \ \ i=1,2.$ From (\ref{a2}), we have 
	\begin{align*}
		v_{i}(t,x)&=\exp\{{\lambda t}\} S_{t}f_{i}(x)+\int_{0}^{t} \exp\{{(-k_{i}+(1+\beta_{i})k_{i})W_{r}+\lambda (t-r)}\} S_{t-r}(v_{i}(r,.))^{1+\beta_{i}}(x) dr \nonumber\\
		&\quad+\int_{0}^{t} \exp\{{(-k_{i}+(1+\gamma_{j})k_{j})W_{r}+\lambda (t-r)}\} S_{t-r}(v_{j}(r,.))^{1+\gamma_{j}}(x) dr,  \nonumber
	\end{align*} for $ i=1,2, \ j\in \left\{ 1,2 \right\} / \left\{i\right\}.$
	\begin{theorem}\label{thm5.1}
		Assume that $\beta_{1},\beta_{2},\gamma_{1},\gamma_{2}>0$ and $V_{i}=\lambda+\frac{k^{2}_{i}}{2}, \ \ i=1,2,$ and let $f_{1}=L_{1}\psi$ and $f_{2}=L_{2}\psi,$ for some positive constant $L_{1}$ and $L_{2}$ with $L_{1}\leq L_{2}.$ Let $\tau_{\ast \ast}$ be given by 
		\begin{align}\label{e2}
			\tau_{**}= \inf \Bigg\{ t\geq 0 : \int^{t}_{0}  \left( e^{((1+\beta_{1})k_{1}-k_{1})W_{r}}\vee e^{((1+\gamma_{2})k_{2}-k_{1})W_{r}} \right) dr &\geq  \frac{1}{(\beta_{1}+\gamma_{2}+1)(L^{\beta_{1}}_{1}\left\|\psi \right\|_{\infty}^{\beta_{1}}+L^{\gamma_{2}}_{1}\left\|\psi \right\|_{\infty}^{\gamma_{2}})}\nonumber\\ \ \mbox{or} \ \int^{t}_{0} \left( e^{((1+\beta_{2})k_{2}-k_{2})W_{r}}\vee e^{((1+\gamma_{1})k_{1}-k_{2})W_{r}} \right) dr&\geq  \frac{1}{(\gamma_{1}+\beta_{2}+1)(L^{\gamma_{1}}_{2}\left\|\psi \right\|_{\infty}^{\gamma_{1}}+L^{\beta_{2}}_{2}\left\|\psi \right\|_{\infty}^{\beta_{2}})} 
			\Bigg\}. 
		\end{align}
		Then $\tau_{\ast \ast} \leq \tau.$ 
		\begin{proof}
			For $i=1,2$, we define
			\begin{align*}
				\mathcal{T}_{i}[v,w] &= \exp\{{\lambda t}\} S_{t}f_{i}(x)+\int_{0}^{t} \exp\{{((1+\beta_{i})k_{i}-k_{i})W_{r} +\lambda (t-r)}\} (S_{t-r} v(r,x))^{1+\beta_{i}} dr \\
				&\quad+\int_{0}^{t} \exp\{{((1+\gamma_{j})k_{j}-k_{i})W_{r} +\lambda (t-r)}\} (S_{t-r} w(r,x))^{1+\gamma_{j}} dr ,
			\end{align*}
			and
			\begin{align*}
			\mathscr{G}_{i}(t)= \Bigg[ 1-\left( \beta_{i}+\gamma_{j}+1 \right)& \int_{0}^{t} \left( e^{((1+\beta_{i})k_{i}-k_{i})W_{r}}\vee e^{((1+\gamma_{j})k_{j}-k_{i})W_{r}} \right) \nonumber\\
			& \times \left( \left\|e^{\lambda r} S_{r}f_{i} \right\|^{\beta_{i}}_{\infty}+\left\|e^{\lambda r} S_{r}f_{i} \right\|^{\gamma_{j}}_{\infty} \right) dr \Bigg]^{\frac{-1}{\beta_{i}+\gamma_{j}+1}},
			\end{align*}
			where $\left\lbrace j \right\rbrace=\left\lbrace1,2\right\rbrace/\{i\} \ \ i=1,2$. Proceeding in the same way as in the proof of Theorem $\ref{t2}$, we get  
			\begin{eqnarray}
				v_{1}(t,x) = \mathcal{T}_{1}[v_{2},v_{1}](t,x), \quad v_{2}(t,x) = \mathcal{T}_{2}[v_{1},v_{2}](t,x) ,\nonumber
			\end{eqnarray}
			whenever $t\leq \tau_{**}$ and $x\in D$. Moreover,
			\begin{align*}
			v_{i}(t,x) \leq& \exp\{{\lambda t}\} S_{t}f_{i}(x) \Bigg[ 1-\left(\beta_{i}+\gamma_{j}+1 \right) \int_{0}^{t} \left( e^{((1+\beta_{i})k_{i}-k_{i})W_{r}}\vee e^{((1+\gamma_{j})k_{j}-k_{i})W_{r}} \right)\\
			&\qquad\qquad\qquad\qquad \qquad\qquad \times \left( \left\| \exp\{{\lambda r}\} S_{r}f_{i} \right\|_{\infty}^{\beta_{i}}+\left\| \exp\{{\lambda r}\} S_{r}f_{i} \right\|_{\infty}^{\gamma_{j}} \right) dr  \Bigg]^{\frac{-1}{\beta_{i}+\gamma_{j}+1}} \nonumber\\
			=& \frac{L_{i}\psi(x)}{\left[ 1-\left( \beta_{i}+\gamma_{j}+1 \right) \left(  L^{\beta_{i}}_{i} \left\|\psi \right\|^{\beta_{i}}_{\infty}+  L^{\gamma_{j}}_{i}\left\| \psi \right\|^{\gamma_{j}}_{\infty} \right)\displaystyle\int_{0}^{t} \left( e^{((1+\beta_{i})k_{i}-k_{i})W_{r}}\vee e^{((1+\gamma_{j})k_{j}-k_{i})W_{r}} \right) dr \right]^{\frac{1}{\beta_{i}+\gamma_{j}+1}}}, \nonumber
			\end{align*}
			by the choices of $f_{1}$ and $f_{2}$.
		\end{proof}
	\end{theorem}
	\begin{corollary}\label{cor1}
		Let the random time $\tau'$ be defined by 
		\begin{align}
			\tau'=\inf \Bigg\{ t\geq 0 : \int_{0}^{t} &\max\Big\{\exp\{{(1+\gamma_{2})k_{2}-k_{1}W_{r}}\}, \exp\{{(1+\beta_{1})k_{1}-k_{1}W_{r}}\},\nonumber\\&\qquad\exp\{{(1+\gamma_{1})k_{1}-k_{2}W_{r}}\},\exp\{{(1+\beta_{2})k_{2}-k_{2}W_{r}}\} \Big\} dr \nonumber\\
			&\geq \min \Bigg\{ \frac{1}{(\beta_{1}+\gamma_{2}+1)(L^{\beta_{1}}_{1}\left\|\psi \right\|_{\infty}^{\beta_{1}}+L^{\gamma_{2}}_{1}\left\|\psi \right\|_{\infty}^{\gamma_{2}})},\nonumber \\ &\quad\frac{1}{(\gamma_{1}+\beta_{2}+1)(L^{\gamma_{1}}_{2}\left\|\psi \right\|_{\infty}^{\gamma_{1}}+L^{\beta_{2}}_{2}\left\|\psi \right\|_{\infty}^{\beta_{2}})} \Bigg\} \Bigg\}. \nonumber
		\end{align}
		Then $\tau'\leq \tau_{\ast \ast}$.
	\end{corollary}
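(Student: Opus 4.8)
The plan is to show $\tau' \le \tau_{**}$ by comparing the defining conditions of the two random times pointwise in $\omega$. Recall that $\tau_{**}$ is the infimum of those $t$ at which \emph{at least one} of four integral inequalities is met, each of the form $\int_0^t \exp\{\rho\, W_r\}\,dr \ge c$, where $\rho$ ranges over the four exponents $(1+n)k_2-k_1$, $(1+m)k_1-k_1$, $(1+p)k_1-k_2$, $(1+q)k_2-k_2$, and $c$ is the corresponding right-hand side (either $c_1 := [(m+n+1)(C_1^m\|\psi\|_\infty^m + C_1^n\|\psi\|_\infty^n)]^{-1}$ for the first two, or $c_2 := [(p+q+1)(C_2^p\|\psi\|_\infty^p + C_2^q\|\psi\|_\infty^q)]^{-1}$ for the last two). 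By contrast, $\tau'$ is the infimum of those $t$ at which the single inequality $\int_0^t \max\{\ldots\}\,dr \ge \min\{c_1,c_2\}$ holds, where the max is taken over the same four exponentials inside the integrand.

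First I would observe the elementary pointwise bound: for every $r \ge 0$ and every $\omega$,
\begin{equation*}
  \max\Big\{ e^{((1+n)k_2-k_1)W_r},\, e^{((1+m)k_1-k_1)W_r},\, e^{((1+p)k_1-k_2)W_r},\, e^{((1+q)k_2-k_2)W_r} \Big\} \ge e^{\rho W_r}
\end{equation*}
for each of the four choices of $\rho$ individually. Integrating over $[0,t]$ preserves this, so $\int_0^t \max\{\ldots\}\,dr \ge \int_0^t e^{\rho W_r}\,dr$ for each $\rho$. Next, since $\min\{c_1,c_2\} \le c_1$ and $\min\{c_1,c_2\} \le c_2$, whenever any one of the four conditions defining $\tau_{**}$ is satisfied at time $t$ — say $\int_0^t e^{\rho W_r}\,dr \ge c$ with $c \in \{c_1,c_2\}$ — we get
\begin{equation*}
  \int_0^t \max\{\ldots\}\,dr \ \ge\ \int_0^t e^{\rho W_r}\,dr \ \ge\ c \ \ge\ \min\{c_1,c_2\},
\end{equation*}
which is exactly the condition defining $\tau'$. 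Hence the set of times at which the $\tau'$-condition holds contains the set of times at which the $\tau_{**}$-condition holds, and taking infima gives $\tau' \le \tau_{**}$. Combined with Theorem \ref{thm5.1}, this yields $\tau' \le \tau_{**} \le \tau$, so $\tau'$ is a (generally cruder, but more explicit) lower bound for the blow-up time.

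I do not anticipate a genuine obstacle here; the argument is a direct monotonicity comparison. The only point requiring a word of care is the behavior of the infimum: one should note that the map $t \mapsto \int_0^t \max\{\ldots\}\,dr$ is continuous and nondecreasing (indeed strictly increasing, as the integrand is a.s.\ strictly positive), so the infimum defining $\tau'$ is attained and the inclusion of threshold-crossing sets translates cleanly into the inequality of hitting times. It may also be worth remarking that $\tau'$ is a stopping time with respect to the Brownian filtration for the same reason. No nontrivial estimate beyond the pointwise $\max \ge$ each term is needed.
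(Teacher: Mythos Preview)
Your argument is correct. The paper states this corollary without proof, treating it as an immediate consequence of the definition of $\tau_{**}$ in Theorem~\ref{thm5.1}; your monotonicity comparison (each exponential is dominated pointwise by the maximum, and each threshold $c_i$ dominates $\min\{c_1,c_2\}$, so the $\tau_{**}$-threshold set is contained in the $\tau'$-threshold set) is exactly the intended one-line reasoning.
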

	To estimate upper bounds for $\tau,$ with non-negative initial data $f_{i}, \ i=1,2$, we first notice that a sub-solution of equation (\ref{a5}) is given by the solution of  
	\begin{equation}\label{ab1} 
		\left\{
		\begin{aligned} 
			\frac{dh_{1}(t)}{dt}&=\exp\{({(1+\beta_{1})k_{1}-k_{1}) W_{t}}\} h_{1}^{1+\beta_{1}}(t)+\exp\{({(1+\gamma_{2})k_{2}-k_{1}) W_{t}}\} h_{2}^{1+\gamma_{2}}(t),\\ \frac{dh_{2}(t)}{dt}&= \exp\{({(1+\gamma_{1})k_{1}-k_{2}) W_{t}}\} h_{1}^{1+\gamma_{1}}(t)+\exp\{({(1+\beta_{2})k_{2}-k_{2}) W_{t}}\}h_{2}^{1+\beta_{2}}(t),\\
			h_{i}(0)&=v_{i}(0,\psi), \ i=1,2.  
		\end{aligned}
		\right.
	\end{equation}
	Implementing the same idea as in the proof of Theorem \ref{thm4.1} to the system (\ref{ab1}) with\\ $\beta_{1}=\gamma_{2}=\gamma_{1}=\beta_{2}=m\ (say)>0$, we deduce that 
	\begin{align}
		\frac{dE(t)}{dt}&\geq \mbox{min} \Big\{  \exp\{{((1+m)k_{1}-k_{1}) W_{t}}\}, \exp\{{((1+m)k_{2}-k_{1}) W_{t}}\},\nonumber\\  &\qquad\qquad \exp\{{((1+m)k_{1}-k_{2}) W_{t}}\}, \exp\{{((1+m)k_{2}-k_{2}) W_{t}}\} \Big\} 2^{-m} E^{1+m}(t). \nonumber
	\end{align} 
	If $\beta_{1}=\gamma_{1}=\beta, \ \gamma_{2}=\beta_{2}=\gamma$ with $\beta>\gamma>0,$ we obtain
	\begin{align}
	\frac{dE(t)}{dt}\geq 2\mbox{min} \Big\{  &\exp\{{((1+\beta)k_{1}-k_{1}) W_{t}}\}, \exp\{{((1+\gamma)k_{2}-k_{1}) W_{t}}\}, \exp\{{((1+\beta)k_{1}-k_{2}) W_{t}}\} \nonumber\\
	&\exp\{{((1+\gamma)k_{2}-k_{2}) W_{t}}\} \Big\} \left[2^{-(1+\gamma)}\epsilon_{0}E^{1+\gamma}(t)-D_{1}\epsilon_{0}^{\frac{1+\beta}{\beta-\gamma}} \right].\nonumber
	\end{align}
	For $\beta_{1}>\gamma_{2}>\gamma_{1}>\beta_{2}>0,$ we find
	\begin{align}
	\frac{dE(t)}{dt}\geq \mbox{min} \Big\{  &\exp\{{((1+\beta_{1})k_{1}-k_{1}) W_{t}}\}, \exp\{{((1+\gamma_{2})k_{2}-k_{1}) W_{t}}\}, \exp\{{((1+\gamma_{1})k_{1}-k_{2}) W_{t}}\} \nonumber\\
	&\exp\{{((1+\beta_{2})k_{2}-k_{2}) W_{t}}\} \Big\} \left[ \frac{\epsilon_{0}E^{1+\beta_{2}}(t)}{2^{(1+\gamma_{2})}} -\left(\epsilon_{0}^{\frac{1+\beta_{1}}{\beta_{1}-\gamma_{2}}} D_{2} + \epsilon_{0}^{\frac{1+\gamma_{1}}{\gamma_{1}-\beta_{2}}}D_{3}+\epsilon_{0}^{\frac{ 1+\gamma_{2}}{\gamma_{2}-\beta_{2}}}D_{4}\right)\right], \nonumber
	\end{align} 
	which also follows from the proof of Theorem \ref{thm4.1}. In this manner we obtain the upper bounds for blow-up time to the solutions of the system \eqref{b1}-\eqref{b2} as follows:
	\begin{theorem}\label{thm5.2}
		Let $\beta_{1},\beta_{2},\gamma_{1},\gamma_{2}>0$ and for any initial data $f_{i} \geq 0, \ i=1,2$.
		\begin{itemize}
		\item[1.] If $\beta_{1}=\gamma_{2}=\gamma_{1}=\beta_{2}=m\ (say)>0$ then $\tau \leq \tau^{\ast \ast}$, where 
		\begin{align}
		\tau^{\ast \ast}= \inf \Bigg\{ &t\geq 0 : \int_{0}^{t} \min \{\exp\{{((1+m)k_{1}-k_{1}) W_{s}}\}, \exp\{{((1+m)k_{2}-k_{1}) W_{s}}\},\nonumber\\ 
		&\qquad \exp\{{((1+m)k_{1}-k_{2}) W_{s}}\}, \exp\{{((1+m)k_{2}-k_{2}) W_{s}}\} \} ds\geq 2^{m} {m}^{-1}E^{-m}(0) \Bigg\}. \nonumber 
		\end{align}   
		\item[2.] 	If $\beta_{1}=\gamma_{1}=\beta, \ \gamma_{2}=\beta_{2}=\gamma,$ with $\beta>\gamma>0,$ and let \begin{align*}
		\epsilon_{0}&\leq \min \Bigg\{ 1, \left( h_{1}(0)/D_{1}^{1/1+\gamma} \right)^{\beta-\gamma} \Bigg\}.
		\end{align*} 
		Assume that 
		\begin{align*}
		2^{-(1+\gamma)}\epsilon_{0}E^{1+\gamma}(0)\geq \epsilon_{0}^{\frac{1+\beta}{\beta-\gamma}}D_{1}.
		\end{align*} then $\tau \leq \tau^{\ast \ast},$ where 
		\begin{align}
	    \tau^{\ast \ast} = \inf \Bigg\{ t\geq 0 : &\int_{0}^{t} \min \{\exp\{{((1+\beta)k_{1}-k_{1}) W_{s}}\}, \exp\{{((1+\gamma)k_{2}-k_{1}) W_{s}}\},\nonumber\\ &\qquad \exp\{{((1+\beta)k_{1}-k_{2}) W_{s}}\}, \exp\{{((1+\gamma)k_{2}-k_{2}) W_{s}}\} \} ds \nonumber\\&\quad\left.\geq {{\gamma}^{-1}E^{-\gamma}(0)\left(\frac{\epsilon_0}{2^{1+\gamma}}-\frac{\epsilon^{\frac{1+\beta}{\beta-\gamma}}_{0}D_{1}}{E^{1+\gamma}(0)} \right)^{-1}} \right\}.  \nonumber
		\end{align}   
		\item[3.] If $\beta_{1}>\gamma_{2}>\gamma_{1}>\beta_{2}>0,$ and let 
		\begin{align*}
		\epsilon_{0}&\leq\min \Bigg\{ 1, \left( h_{1}(0)/ D_{2}^{1/1+\gamma_{2}} \right)^{\beta_{1}-\gamma_{2}}, \left( h_{1}(0)/D_{3}^{1/1+\beta_{2}}\right)^{\gamma_{1}-\beta_{2}}\Bigg\}.
		\end{align*} 
		Assume that 
		\begin{eqnarray*} 
		2^{-(1+\gamma_{2})}\epsilon_{0}E^{1+\beta_{2}}(0)\geq \epsilon_{0}^{\frac{1+\beta_{1}}{\beta_{1}-\gamma_{2}}} D_{2} + \epsilon_{0}^{\frac{1+\gamma_{1}}{\gamma_{1}-\beta_{2}}}D_{3}+\epsilon_{0}^{\frac{ 1+\gamma_{2}}{\gamma_{2}-\beta_{2}}}D_{4}.
		\end{eqnarray*} then $\tau \leq \tau^{\ast \ast},$ where 
		\begin{align}
		\tau^{\ast \ast} = \inf \Bigg\{ t\geq 0 : &\int_{0}^{t} \min \{\exp\{{((1+\beta_{1})k_{1}-k_{1}) W_{s}}\}, \exp\{{((1+\gamma_{2})k_{2}-k_{1}) W_{s}}\},\nonumber\\ &\qquad \exp\{{((1+\gamma_{1})k_{1}-k_{2}) W_{s}}\}, \exp\{{((1+\beta_{2})k_{2}-k_{2}) W_{s}}\} \} ds \nonumber\\&\quad\left.\geq {{\beta_{2}}^{-1}E^{-\beta_{2}}(0)\left[ \frac{\epsilon_{0}}{2^{(1+\gamma_{2})}} -\frac{\left(  \epsilon_{0}^{\frac{1+\beta_{1}}{\beta_{1}-\gamma_{2}}} D_{2} + \epsilon_{0}^{\frac{1+\gamma_{1}}{\gamma_{1}-\beta_{2}}}D_{3}+\epsilon_{0}^{\frac{ 1+\gamma_{2}}{\gamma_{2}-\beta_{2}}}D_{4}\right)}{E^{1+\beta_{2}}(0)}\right]^{-1}} \right\}.  \nonumber
		\end{align}
		where $D_{1},D_{2},D_{3}$ and $D_{4}$ are defined in Theorem \ref{thm4.1}. 
			\end{itemize} 
	\end{theorem}
	\begin{corollary}
		Assume that $f_{i} \geq 0, \ i=1,2$ such that for $t \geq 0$ 
		\begin{align} {\label{cor3.1}}  
		\left(\beta_{i}+\gamma_{j}+1 \right) \int_{0}^{t} \left( e^{((1+\beta_{i})k_{i}-k_{i})W_{r}}\vee e^{((1+\gamma_{j})k_{j}-k_{i})W_{r}} \right) \left( \left\| e^{\lambda r} S_{r}f_{i} \right\|_{\infty}^{\beta_{i}}+\left\| e^{\lambda r} S_{r}f_{i} \right\|_{\infty}^{\gamma_{j}} \right) dr <1. 
		\end{align}
		Then the system \eqref{b1}-\eqref{b2} admits a gobal solutions $u_{i}(t,x)$ that satisfies 
		\begin{align*}
		0\leq u_{i}(t,x)\leq e^{k_{i}W_{t}+\lambda t} S_{t}f_{i}(x) \Bigg[& 1-\left(\beta_{i}+\gamma_{j}+1 \right) \int_{0}^{t} \left( e^{((1+\beta_{i})k_{i}-k_{i})W_{r}}\vee e^{((1+\gamma_{j})k_{j}-k_{i})W_{r}} \right) \nonumber\\ &\times \left( \left\| \exp\{{\lambda r}\} S_{r}f_{i} \right\|_{\infty}^{\beta_{i}}+\left\| \exp\{{\lambda r}\} S_{r}f_{i} \right\|_{\infty}^{\gamma_{j}} \right) dr  \Bigg]^{\frac{-1}{\beta_{i}+\gamma_{j}+1}}, \ t \geq 0,	
		\end{align*}
		where $\left\lbrace j \right\rbrace=\left\lbrace1,2\right\rbrace/\{i\}$.
	\end{corollary}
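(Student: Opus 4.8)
The plan is to revisit the monotone iteration scheme already set up in the proof of Theorem~\ref{thm5.1} (which itself mirrors the one in Theorem~\ref{t2}) and to observe that hypothesis~\eqref{cor3.1} is precisely what allows that scheme to be run on the whole half-line $[0,\infty)$ rather than only up to the stopping time $\tau_{**}$. I keep the operators $\mathcal{T}_{i}$ and the comparison functions $\mathscr{G}_{i}$ exactly as in the proof of Theorem~\ref{thm5.1}, now written for the general initial data $f_{i}$:
\[
\mathscr{G}_{i}(t)=\Bigl[\,1-(\beta_{i}+\gamma_{j}+1)\int_{0}^{t}e^{\rho_{i}W_{r}}\bigl(\|e^{\lambda r}S_{r}f_{i}\|_{\infty}^{\beta_{i}}+\|e^{\lambda r}S_{r}f_{i}\|_{\infty}^{\gamma_{j}}\bigr)\,dr\,\Bigr]^{-1/(\beta_{i}+\gamma_{j}+1)},
\]
with $\{j\}=\{1,2\}\setminus\{i\}$. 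Under \eqref{cor3.1} the bracket stays strictly positive for every $t\geq 0$, so $\mathscr{G}_{i}$ is finite on all of $[0,\infty)$, with $\mathscr{G}_{i}(0)=1$ and $\mathscr{G}_{i}'(t)=e^{\rho_{i}W_{t}}(\|e^{\lambda t}S_{t}f_{i}\|_{\infty}^{\beta_{i}}+\|e^{\lambda t}S_{t}f_{i}\|_{\infty}^{\gamma_{j}})\mathscr{G}_{i}^{\beta_{i}+\gamma_{j}+2}(t)$.

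Next I would run the Picard iteration with $u_{i}^{(0)}(t,x)=e^{\lambda t}S_{t}f_{i}(x)\geq 0$ and $u_{i}^{(n)}=\mathcal{T}_{i}[u_{j}^{(n-1)},u_{j}^{(n-1)}]$ (equivalently, identifying the two parallel sequences of the proof of Theorem~\ref{t2}), and repeat verbatim the two inductions there: monotonicity $u_{i}^{(n)}\leq u_{i}^{(n+1)}$, coming from $u_{i}^{(0)}\leq\mathcal{T}_{i}[u_{j}^{(0)},u_{j}^{(0)}]$ and the monotonicity of $\mathcal{T}_{i}$ in both arguments; and the a priori bound $0\leq u_{i}^{(n)}(t,x)\leq e^{\lambda t}S_{t}f_{i}(x)\mathscr{G}_{i}(t)$, by induction using the positivity and $L^{\infty}$-contractivity of $S_{t}$, the inequality $\mathscr{G}_{i}^{1+\beta_{i}}+\mathscr{G}_{i}^{1+\gamma_{j}}\leq\mathscr{G}_{i}^{\beta_{i}+\gamma_{j}+2}$ (valid since $\mathscr{G}_{i}\geq 1$), and the integral identity for $\mathscr{G}_{i}$. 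Since \eqref{cor3.1} makes this bound valid for \emph{all} $t\geq 0$, the increasing limit $v_{i}(t,x)=\lim_{n\to\infty}u_{i}^{(n)}(t,x)$ exists and is finite for every $t\geq 0$, $x\in D$; by monotone convergence one passes to the limit inside the two nonlinear integrals to obtain $v_{i}=\mathcal{T}_{i}[v_{j},v_{j}]$ on $[0,\infty)$, so that $(v_{1},v_{2})$ is a global nonnegative mild solution of \eqref{pde1} obeying $0\leq v_{i}(t,x)\leq e^{\lambda t}S_{t}f_{i}(x)\mathscr{G}_{i}(t)$.

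Finally I would undo the transformation~\eqref{1}: $u_{i}(t,x)=e^{k_{i}W_{t}}v_{i}(t,x)$ is then a global nonnegative solution of \eqref{b1}-\eqref{b2} — the manipulations of Section~\ref{sec2} relating weak solutions of \eqref{b1} and of \eqref{pde1} are reversible thanks to the a.s.\ continuity of $W$ — and multiplying the bound on $v_{i}$ by $e^{k_{i}W_{t}}$ yields exactly the asserted estimate. The only real work is bookkeeping: one must confirm that the a priori bound genuinely closes the induction on all of $[0,\infty)$ with no residual localization to $\tau_{**}$, and that the term-by-term passage to the limit is legitimate; both follow at once from \eqref{cor3.1} together with $S_{t}$ being positivity-preserving and an $L^{\infty}$-contraction. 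It is also worth recording why the special form $f_{i}=C_{i}\psi$ used in Theorem~\ref{thm5.1} is not needed here: that form served only to reduce $\mathscr{G}_{i}$ to the explicit stopping-time description of $\tau_{**}$, whereas the present global-existence-plus-bound conclusion works for the general $\mathscr{G}_{i}$, and \eqref{cor3.1} is exactly the condition that $\mathscr{G}_{i}$ never blows up.
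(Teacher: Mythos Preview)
Your proposal is correct and is exactly the approach the paper takes: its entire proof reads ``Proof of this immediately follows from Theorem~\ref{thm5.1},'' and what you have written is precisely the natural unpacking of that sentence --- the same operators $\mathcal{T}_{i}$, the same comparison functions $\mathscr{G}_{i}$, the same monotone iteration, with the observation that \eqref{cor3.1} keeps $\mathscr{G}_{i}$ finite on all of $[0,\infty)$ so the scheme runs globally and then one undoes the transformation~\eqref{1}.
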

	\begin{proof}
		Proof of this result immediately follows from Theorem $\ref{thm5.1}.$  
	\end{proof}
	\section{Probability of blow-up}{\label{sec6}}
	In this section, we consider the system $(\ref{c3})$ with the set of parameters as follows:  
	\begin{equation}\label{s1}
		\begin{aligned}
			\varphi_{i}=\psi,  \ \ i=1,2, \ \ \ &k:=\frac{ k_{1}^{2}+k_{2}^{2}}{2}, \ \ \mu := \lambda+k,\ V_1=V_2=0,\\
			\mbox{ and } \ E(0)=\int_{D} [&f_{1}(x)+f_{2}(x)] \psi(x)dx.
		\end{aligned}
	\end{equation}	
for any positive initial data $f_{i},\ i=1,2.$ Here, we estimate  lower and upper bounds for the probability of the blow-up solution of the system \eqref{b1}-\eqref{b2} by using the methods developed in \cite{doz2010, Eug2017} and \cite{niu2012}. 

	Using the fact that $\psi(x)=0$ for $x \in \partial D,$ we have
	\begin{equation}{\label{as6}}
	\begin{aligned}
	v_{i}(s,\Delta \psi)=\int_{D} v_{i}(s,x)\Delta\psi(x)dx&=\int_{D} \Delta v_{i}(s,x)\psi(x)dx=\Delta v_{i}(s,\psi)=-\lambda v_{i}(s,\psi) .
	\end{aligned}
	\end{equation}
	We recall that $X(\alpha,\delta)$ is said to be \emph{a Gamma random variable} with parameters $\alpha+1>0,\delta>0$ if its density is given by (cf. \cite{li})
	\begin{equation} \label{abc2}
	\widetilde{f}(x) = \left\{
	\begin{aligned}
	&\frac{x^{\alpha}}{\delta^{\alpha+1}\Gamma (\alpha+1)} \exp\left\lbrace-\frac{x}{\delta} \right\rbrace , \ x\geq 0 , \\
	&\hspace{.5 in} 0, \hspace{1.05 in} \ x<0. 
	\end{aligned}
	\right.
	\end{equation}
	\begin{lemma}[\cite{yor2005},\cite{revuz1999}, Chapter 6, Corollary 1.2, \cite{yor2001}]{\label{Rll1}}
	For any $\alpha>0,$ the exponential functional $\displaystyle\int_{0}^{\infty} e^{2(B(t)-\alpha t)} dt$ 	is distributed as $(2X(\alpha,1))^{-1},$ where $B(t)$ is an one-dimensional standard Brownian motion. 
	\end{lemma}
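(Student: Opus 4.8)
The statement is the classical identity of Dufresne for the perpetual exponential functional of Brownian motion with drift, and complete proofs are contained in \cite{yor2005,revuz1999,yor2001}; the plan below indicates how one would reconstruct it. First I would check that the object is well posed: by the strong law of large numbers for Brownian motion, $t^{-1}W_t\to 0$ almost surely, so $W_t-\alpha t\to-\infty$ and the integrand $e^{2(W_t-\alpha t)}$ decays exponentially fast; hence
\[
A:=\int_{0}^{\infty} e^{2(W_t-\alpha t)}\,dt
\]
is almost surely finite and strictly positive, and its law charges neither $0$ nor $\infty$.

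The engine of the proof is a distributional fixed-point identity for $A$. Fix $h>0$, let $(\mathscr{F}_t)_{t\ge 0}$ be the natural filtration of $W$, and split the integral at $h$; since $(W_{h+u}-W_h)_{u\ge 0}$ is a Brownian motion independent of $\mathscr{F}_h$,
\[
A\;\stackrel{d}{=}\;\int_{0}^{h} e^{2(W_s-\alpha s)}\,ds\;+\;e^{2(W_h-\alpha h)}\,A' ,
\]
with $A'$ an independent copy of $A$. Writing $\phi(\theta):=\mathbb{E}\!\left[e^{-\theta A}\right]$ and conditioning on $\mathscr{F}_h$ turns this into $\phi(\theta)=\mathbb{E}\!\left[\exp\!\Big(-\theta\!\int_0^h e^{2(W_s-\alpha s)}ds\Big)\,\phi\!\big(\theta\, e^{2(W_h-\alpha h)}\big)\right]$. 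Then I would let $h\downarrow 0$ and expand to order $h$, using $\int_0^h e^{2(W_s-\alpha s)}ds=h+o(h)$, $e^{2(W_h-\alpha h)}=1+2W_h+2W_h^{2}-2\alpha h+o(h)$, and $\mathbb{E}[W_h]=0$, $\mathbb{E}[W_h^{2}]=h$; a routine It\^o/Taylor computation (the fluctuations of $\int_0^h e^{2(W_s-\alpha s)}ds$ around $h$ contribute only at order $h^{2}$) yields the linear second-order equation
\[
2\theta\,\phi''(\theta)+(2-2\alpha)\,\phi'(\theta)-\phi(\theta)=0,\qquad \phi(0)=1 .
\]

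It remains to identify $\phi$. The indicial exponents of this equation at $\theta=0$ are $0$ and $\alpha$, and any Laplace transform of a positive random variable must stay bounded as $\theta\to\infty$; these two constraints pin down a unique solution, which after the normalisation $\phi(0)=1$ is $\displaystyle\phi(\theta)=\frac{(2\theta)^{\alpha/2}}{2^{\alpha-1}\Gamma(\alpha)}\,K_{\alpha}\!\big(\sqrt{2\theta}\big)$ — precisely the Laplace transform of the inverse-gamma law of $(2X(\alpha,1))^{-1}$, where $X(\alpha,1)$ is the gamma variable of unit scale in the statement. By uniqueness of Laplace transforms this gives the identity. A more conceptual alternative, and the one used in \cite{yor2001}, is Lamperti's relation: one writes $\exp(W_t-\alpha t)=\varrho_{A_t}$ with $A_t=\int_0^t e^{2(W_s-\alpha s)}ds$ and $\varrho$ a Bessel process of index $-\alpha$ started from $1$ (suitably interpreted when $\alpha\ge 1$); since $\exp(W_t-\alpha t)\to 0$, the clock value $A=A_\infty$ equals the first hitting time of $0$ by $\varrho$, whose law is classically $r_0^{2}/(2X(\alpha,1))$ with $r_0=1$.

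The only genuinely delicate point is this last step — solving the ODE and recognising its admissible solution as the inverse-gamma Laplace transform (equivalently, computing the law of the Bessel hitting time), since that is exactly where the content of Dufresne's theorem lies; in a rigorous write-up I would cite the computations in \cite{yor2001,revuz1999} there. Everything else (finiteness, the fixed-point equation, the infinitesimal expansion, and the uniqueness of the completely monotone solution via the indicial analysis together with boundedness at infinity) is straightforward.
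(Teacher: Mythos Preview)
Your sketch is a reasonable outline of Dufresne's theorem and captures the two standard arguments in the literature (the Laplace--transform ODE obtained from the self-similarity fixed-point, and the Lamperti time-change to a Bessel process). However, there is nothing to compare it against: the paper does \emph{not} prove this lemma. It is stated purely as a citation --- the attribution ``\cite{yor2005}, \cite{revuz1999}, Chapter~6, Corollary~1.2, \cite{yor2001}'' is the entire content, and the result is then invoked as a black box in Theorems~\ref{thm6.1} and~\ref{Rthm7.3}. So your proposal is not an alternative to the paper's argument; it is simply supplying what the paper deliberately outsources to the references.

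If you intend to include a proof in your own write-up, the sketch is sound in outline, but a couple of points would need tightening. The infinitesimal expansion leading to the ODE requires a priori smoothness of $\phi$ (or, more cleanly, one works with $\psi(\lambda)=\mathbb{E}[A^{-\lambda}]$ and derives a first-order recursion in $\lambda$, which is the route actually taken in \cite{yor2001}); and the ``uniqueness of the completely monotone solution via indicial analysis together with boundedness at infinity'' step, while correct, is exactly the nontrivial identification you flag as delicate --- so in practice one ends up citing the same references anyway. The Lamperti argument is the cleaner of the two, and if you want a self-contained proof that is the one to write out.
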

		
The following result provides a lower bound: 
	\begin{theorem} {\label{thm6.1}}
	For each positive initial values $f_{i}\ (i=1,2)$ for $E(0)$ and $\mu$ are defined in $\eqref{s1},$ for $\rho_{1}$ and $\rho_{2}$ defined in $(\ref{a4})$ and $ \epsilon_{0}, D_{1},D_{2},D_{3}$ and $D_{4}$ are defined in Theorem \ref{thm4.1}, we have the following results:
	\begin{itemize}
		\item[1.] If $\beta_{1}=\gamma_{2}=\gamma_{1}=\beta_{2}=m\ (say),$ with $m>0$, implies that $\rho_{1}=\rho_{2}=\rho\ (say)$, then a lower bound for the  probability of blow-up solution of the system \eqref{b1}-\eqref{b2} is given by
		\begin{align}
		\mathbb{P}\{ \tau<\infty\}\geq \int^{\infty}_{\frac{2^{m} E^{-m}(0)}{m}}h_{1}(y)dy, \nonumber 
		\end{align}
		where $\tau \leq \tau^{\ast}_{1},$ 
		\begin{align*}
		\tau^{\ast}_{1}&=\inf \left\lbrace t\geq 0 : \int_{0}^{t} \exp\{{\rho W_{s}}-\mu m s\} ds \geq 2^{m}{m}^{-1}E^{-m}(0) \right\rbrace\ \mbox{and}\ \\
		h_{1}(y)&=\frac{(2/\rho^{2}y)^{(2\mu m /\rho^{2})}}{y\Gamma(2\mu m /\rho^{2})} \exp\left(-\frac{2}{\rho^{2}y}\right).\end{align*}

		\item[2.] If $\beta_{1}=\gamma_{1}=\beta, \ \gamma_{2}=\beta_{2}=\gamma$ with $\beta>\gamma>0,\ k_2\geq k_1$, 
%and the condition 
%		\begin{align}\label{te1}	
%		 \mu E(0)+\epsilon^{\frac{1+\beta}{\beta-\gamma}}_{0}D_{1} \geq \epsilon_{0}2^{-(1+\gamma)}E^{1+\gamma}(0)
%		 \end{align}} holds, 
        then a lower bound for the probability of blow-up solution of the system \eqref{b1}-\eqref{b2} is given by 
		\begin{align}
	    \mathbb{P}\{ \tau<\infty\}\geq  \frac{8 \mu \gamma}{\rho^{2}_{2}} \sum_{n \geq 1} \frac{\exp \Bigg\{  - \frac{\rho^{2}_{2} \widetilde{N}_{1}}{8}j^{2}_{\frac{2 \mu \gamma}{\rho^{2}_{2}}-1, n} \Bigg\}}{j^{2}_{\frac{2 \mu \gamma}{\rho^{2}_{2}}-1, n}}, \nonumber 
		\end{align}
		where $\tau \leq \tau_{2}^{\ast \ast},$
		\begin{align*}
		\tau^{\ast \ast}_{2} &= \inf \left\{ t\geq 0 : \int_{0}^{t} e^{-(\rho_{2} W_{s}-\mu \gamma s)}\mathbf{1} _{\{\rho_{2}W_{s}-\mu \gamma s \geq 0\}} ds \geq \widetilde{N}_{1} \right\},  \nonumber\\
%		\tau^{\ast \ast}_{2} &= \inf \left\{ t\geq 0 : \int_{0}^{t} e^{\rho_{2} W_{s}-\mu \gamma s} ds \geq \widetilde{N}_{1} \right\},  \nonumber\   
		%		h_{2}(y)&=\frac{(2/{\rho_{2}}^{2}y)^{(2\mu \gamma/{\rho_{2}}^{2})}}{y \Gamma(2\mu \gamma/{\rho_{2}}^{2})} \exp\left(-\frac{2}{{\rho_{2}}^{2}y}\right), \\
        \widetilde{N}_{1}&={{\gamma}^{-1}E^{-\gamma}(0)\left(\frac{\epsilon_0}{2^{1+\gamma}}-\frac{\epsilon^{\frac{1+\beta}{\beta-\gamma}}_{0}D_{1}}{E^{1+\gamma}(0)} \right)^{-1}},
		\end{align*}
        and $\Big\{j_{\frac{2 \mu \gamma}{\rho^{2}_{2}}-1, n}\Big\}_{n \geq 1}$ is the increasing sequence of all positive zeros of the Bessel function of the first kind of order $\frac{2 \mu}{\rho^{2}_{2}}-1>-1$ (see more details in \cite{Eug2017}).
	
		\item[3.] If $\beta_{1}>\gamma_{2}>\gamma_{1}>\beta_{2}>0$ with $k_2\geq k_1$,
%		and the condition 
%		\begin{align}\label{te2}
%		\mu E(0) +\left(\epsilon_{0}^{\frac{1+\beta_{1}}{\beta_{1}-\gamma_{2}}}D_{2} + \epsilon_{0}^{\frac{1+\gamma_{1}}{\gamma_{1}-\beta_{2}}}D_{3}+\epsilon_{0}^{\frac{ 1+\gamma_{2}}{\gamma_{2}-\beta_{2}}}D_{4}\right)\geq \epsilon_{0}^{2}2^{-(1+\gamma_{2})}E^{1+\beta_{2}}(0),
%		\end{align}}holds, 
         then a lower bound for the probability of blow-up solution of the system \eqref{b1}-\eqref{b2} is given by 
		\begin{align}
	    \mathbb{P}\{ \tau<\infty\}\geq  \frac{8 \mu \beta_{2}}{\rho^{2}_{2}} \sum_{n \geq 1} \frac{\exp \Bigg\{  - \frac{\rho^{2}_{2}\widetilde{N}_{2}}{8}j^{2}_{\frac{2 \mu \beta_{2}}{\rho^{2}_{2}}-1, n} \Bigg\}}{j^{2}_{\frac{2 \mu \beta_{2}}{\rho^{2}_{2}}-1, n}}, \nonumber 
		\end{align}
		where $\tau \leq \tau^{\ast \ast}_{3}$,
		\begin{align*}
		&\tau^{\ast \ast}_{3} = \inf \Bigg\{ t\geq 0 : \int_{0}^{t}  e^{\rho_{2} W_{s}-\mu \beta_{2}s}\mathbf{1} _{\{\rho_{2}W_{s}-\mu \gamma s \geq 0\}} ds \geq \widetilde{N}_{2} \Bigg\},  \nonumber  \\
%		&h_{3}(y)=\frac{(2/{\rho_{2}}^{2}y)^{(2\mu \beta_{2}/{\rho_{2}}^{2})}}{y \Gamma(2\mu \beta_{2}/{\rho_{2}}^{2})} \exp\left(-\frac{2}{{\rho_{2}}^{2}y}\right), \\
		&\widetilde{N}_2={{\beta_{2}}^{-1}E^{-\beta_{2}}(0)\left[ \frac{\epsilon_{0}}{2^{(1+\gamma_{2})}} -\frac{\left(\epsilon_{0}^{\frac{1+\beta_{1}}{\beta_{1}-\gamma_{2}}}D_{2} + \epsilon_{0}^{\frac{1+\gamma_{1}}{\gamma_{1}-\beta_{2}}}D_{3}+\epsilon_{0}^{\frac{ 1+\gamma_{2}}{\gamma_{2}-\beta_{2}}}D_{4}\right)}{E^{1+\beta_{2}}(0)}\right]^{-1}}.
		\end{align*}
	    \end{itemize}
		\end{theorem}
		
%		\begin{Rem}
%		The hypothesis in \eqref{te1} and \eqref{te2} are used only  to ensure that the expression in parenthesis of $\widetilde{N}_1$ and $\widetilde{N}_2$ are positive.
%		\end{Rem}
			
		\begin{proof}
		We deduce from $(\ref{c3})$ that
		\begin{align}{\label{se2}} 
		v_{i}(t,\psi) &= v_{i}(0,\psi) + \int_{0}^{t} \left[ v_{i}(s,\Delta\psi) - \frac{k_{i}^{2}}{2} v_{i}(s,\psi) \right] ds \nonumber\\
		&\qquad + \int_{0}^{t} \exp\{{-k_{i} W_{s}}\} (\exp\{{k_{i} W_{s}}\} v_{i})^{1+\beta_{i}} (s,\psi) ds\nonumber\\
		&\qquad + \int_{0}^{t} \exp\{{-k_{i} W_{s}}\} (\exp\{{k_{j} W_{s}}\} v_{j})^{1+\gamma_{j}} (s,\psi) ds,   
		\end{align}
		where $\left\lbrace j \right\rbrace=\left\lbrace1,2\right\rbrace/\{i\}$.
		Therefore, $(\ref{se2})$ becomes
		\begin{align}{\label{s2}} 
		\frac{dv_{i}(t,\psi)}{dt} &= - \left( \lambda + \frac{k_{i}^{2}}{2}\right)  v_{i}(t,\psi) + \exp\{{-k_{i} W_{t}}\}\left(\exp\{k_{i}W_{t}\}v_{i}\right)^{1+\beta_{i}} (t,\psi) \nonumber\\
		&\qquad+ \exp\{{-k_{i} W_{t}}\}\left(\exp\{k_{j}W_{t}\}v_{j}\right)^{1+\gamma_{j}} (t,\psi),
		\end{align}	
	for a.e. $t\geq 0$. 	By utilizing Jensen's inequality and $(\ref{s1})$ in $(\ref{s2}),$ we have  
		\begin{align} 
		\frac{dv_{i}(t,\psi)}{dt} &\geq - \mu v_{i}(t,\psi)+ \exp\{{-k_{i} W_{t}+(1+\beta_{i})k_{i}W_{t}}\} {v_{i} (t,\psi)}^{1+\beta_{i}} \nonumber\\
		&\qquad+ \exp\{{-k_{i} W_{t}+(1+\gamma_{j})k_{j}W_{t}}\} {v_{j} (t,\psi)}^{1+\gamma_{j}}.\nonumber  
		\end{align}
	Therefore, $v_{i}(t,\psi) \geq h_{i}(t),$ for $ i=1,2,$ where
		\begin{equation} 
		\left\{
		\begin{aligned}
		\frac{dh_{1}(t)}{dt}&=-\mu h_{1}(t) +\exp\{{\rho_{1} W_{t}}\} \left[ h_{1}^{1+\beta_{1}}(t)+h_{2}^{1+\gamma_{2}}(t) \right], \\ 
		\frac{dh_{2}(t)}{dt}&=-\mu h_{2}(t) +\exp\{{\rho_{2} W_{t}}\} \left[ h_{1}^{1+\gamma_{1}}(t)+h_{2}^{1+\beta_{2}}(t) \right],\\
		h_{i}(0)&=v_{i}(0,\psi).  
		\end{aligned}
		\right.
		\end{equation}
		Let  us define $E(t):=h_{1}(t)+h_{2}(t), \ t\geq 0,$ so that $E(\cdot)$ satisfies 
		\begin{equation} 
		\frac{dE(t)}{dt}=-\mu E(t)+\exp\{{\rho_{1} W_{t}}\} \left[ h_{1}^{1+\beta_{1}}(t)+h_{2}^{1+\gamma_{2}}(t) \right] + \exp\{{\rho_{2} W_{t}}\} \left[ h_{1}^{1+\gamma_{1}}(t)+h_{2}^{1+\beta_{2}}(t) \right].
		\end{equation}
		\textbf{Proof of (1).} If $\beta_{1}=\gamma_{2}=\gamma_{1}=\beta_{2}=m$ (say) and $m>0,$ implies that $\rho_{1}=\rho_{2}=\rho$ (say), then by using the same idea used as in Case 1 of Theorem $\ref{thm4.1},$ we have 
		\begin{align}
		\frac{dE(t)}{dt}\geq-\mu E(t)+2^{-m}\exp\{{\rho W_{t}}\} E^{1+m}(t). \nonumber
		\end{align}
		Thus, $E(t)$ blows up not later than the solution $I(t)$ of the equation
		\begin{align}
		\frac{dI(t)}{dt}=-\mu I(t)+2^{-m}\exp\{{\rho W_{t}}\} I^{1+m}(t),\ I(0)=E(0), \nonumber
		\end{align}
		and 
		\begin{align}
		I(t)= e^{-\mu t}\left\lbrace E^{-m}(0)-m2^{-m} \displaystyle\int_{0}^{t} \exp\{\rho W_{s}-\mu m s\}   ds \right\rbrace^{-\frac{1}{m}}.\nonumber 
		\end{align}
		For the above inequality, the blow-up time is given by
		\begin{eqnarray}\label{tau1}
		\tau^{\ast}_{1}=\inf \left\lbrace t\geq 0 : \int_{0}^{t} \exp\{{\rho W_{s}}-\mu m s\} ds \geq 2^{m}{m}^{-1}E^{-m}(0) \right\rbrace. 
		\end{eqnarray} 
	    Note that $\tau \leq \tau^{\ast}_{1}$ from the definition of $\tau^{\ast }_{1},$  and 
		\begin{align}
		\mathbb{P}\{ \tau^{\ast}_{1}=\infty\} &= \mathbb{P}\Bigg( \int_{0}^{\infty} \exp\{\rho W_{s}-\mu m s\}ds\leq 2^{m} {m}^{-1}E^{-m}(0) \Bigg)  \nonumber\\
		&= \mathbb{P}\Bigg( \int_{0}^{\infty} \exp\{\rho W_{s}^{(\hat{\alpha})}\}ds \leq 2^{m} {m}^{-1}E^{-m}(0) \Bigg)  ,\nonumber
		\end{align}
		where $W_{s}^{(\hat{\alpha})}:=W_{s}-\hat{\alpha}s$ and $\hat{\alpha} =\frac{\mu m}{\rho}.$ By performing the transformation $s\mapsto \frac{4t}{\rho^2}$ and setting $\nu = \frac{2\hat{\alpha}}{\rho}$,  we get
		\begin{align}
		\mathbb{P}\{ \tau<\infty\} \geq \mathbb{P}\{ \tau^{\ast }_{1}<\infty\}=1-\mathbb{P}\{ \tau^{\ast }_{1}=\infty\}
		&=\mathbb{P}\Bigg(\frac{4}{\rho^{2}} \int_{0}^{\infty} \exp\{2 W_{t}^{(\nu)}\}dt > 2^{m} {m}^{-1}E^{-m}(0) \Bigg).\nonumber
		\end{align}
		It follows from Chapter 6, Corollary 1.2, \cite{yor2001}, that 
		\begin{equation}{\label{ex1}}
		\int_{0}^{\infty} \exp\{ 2W_{t}^{(\nu)}\}dt \overset{Law}{=} \frac{1}{2Z_{\nu}},
		\end{equation}
		where $Z_{\nu}$ is a Gamma random variable with parameter $\nu,$ that is, $\mathbb{P} (Z_{\nu} \in dy)= \frac{1}{\Gamma(\nu)}e^{-y} y^{\nu-1} dy.$ Therefore, we get 
		\begin{align}
		\mathbb{P}\{ \tau<\infty\}\geq \int^{\infty}_{\frac{2^{m} E^{-m}(0)}{m}} h_{1}(y)dy, \nonumber 
		\end{align}
		where 
		\begin{align}
		h_{1}(y)=\frac{(2/\rho^{2}y)^{(2\mu m /\rho^{2})}}{y \Gamma(2\mu m/\rho^{2})} \exp\left(-\frac{2}{\rho^{2}y}\right).  \nonumber 
		\end{align}
		
		\noindent\textbf{{Proof of (2).}} Suppose $\beta_{1}=\gamma_{1}=\beta, \ \gamma_{2}=\beta_{2}=\gamma$ with $\beta>\gamma>0$. By using the same method as in Case 2 of Theorem $\ref{thm4.1},$ we get 
		\begin{align}
		\frac{dE(t)}{dt}\geq-\mu E(t)+\left( e^{\rho_{1} W_{s}} \wedge e^{\rho_{2} W_{s}}\right)\left[2^{-(1+\gamma)}\epsilon_{0}E^{(1+\gamma)}(t)-D_{1}\epsilon_{0}^{\frac{1+\beta}{\beta-\gamma}}\right]. \nonumber
		\end{align}
		Thus, $E(t)$ blows up not later than the solution $I(t)$ of the equation
		\begin{align}
		\frac{dI(t)}{I^{(1+\gamma)}(t)}=\frac{-\mu}{I^{\gamma}(t)}dt+\left( e^{\rho_{1} W_{s}} \wedge e^{\rho_{2} W_{s}}\right)\left[2^{-(1+\gamma)}\epsilon_{0}-\frac{D_{1}\epsilon_{0}^{\frac{1+\beta}{\beta-\gamma}}}{I^{(1+\gamma)}(0)}\right]dt,\ I(0)=E(0)\nonumber
		\end{align}
		and 
		\begin{align}
		I(t)= e^{-\mu t}\left\lbrace E^{-\gamma}(0)-\gamma\left[ \frac{\epsilon_0}{2^{1+\gamma}}-\frac{ \epsilon_{0}^{\frac{1+\beta}{\beta-\gamma}}D_{1} }{E^{1+\gamma}(0)} \right] \displaystyle\int_{0}^{t} \left( e^{\rho_{1} W_{s}} \wedge e^{\rho_{2} W_{s}} \right) e^{-\mu \gamma s} ds \right\rbrace^{-\frac{1}{\gamma}}.\nonumber 
		\end{align}
		For the above inequality, the blow-up time is given by		
		\begin{align}
		\tau^{\ast}_{2} = \inf \left\{ t\geq 0 : \int_{0}^{t} \left( e^{\rho_{1} W_{s}} \wedge e^{\rho_{2} W_{s}}\right) e^{-\mu \gamma s} ds \geq {{\gamma}^{-1}E^{-\gamma}(0)\left(\frac{\epsilon_0}{2^{1+\gamma}}-\frac{\epsilon^{\frac{1+\beta}{\beta-\gamma}}_{0}D_{1}}{E^{1+\gamma}(0)} \right)^{-1}} \right\},  \nonumber
		\end{align}
		where $\rho_{1}$ and $\rho_{2}$ are defined in (\ref{a4}).
		Note that $k_2\geq k_1$ implies that $\rho_{2}>0,$ $\rho_2\leq\rho_1$ and 
		\begin{align}
		\int_{0}^{t} \left( e^{\rho_{1} W_{s}} \wedge e^{\rho_{2} W_{s}}\right) e^{-\mu \gamma s} ds &= \int_{0}^{t} e^{\rho_{1} W_{s}-\mu \gamma s}  \mathbf{1} _{\{W_{s}<0\}} ds+\int_{0}^{t} e^{\rho_{2} W_{s}-\mu \gamma s}  \mathbf{1} _{\{W_{s} \geq 0\}} ds \nonumber\\
		&\geq \int_{0}^{t} e^{\rho_{2} W_{s}-\mu \gamma s}\mathbf{1} _{\{W_{s} \geq 0\}} ds, \nonumber
		\end{align}
		for all $t \geq 0$. We conclude that 
		\begin{align}\label{f1}
		\int_{0}^{t} \left( e^{\rho_{1} W_{s}} \wedge e^{\rho_{2} W_{s}}\right) e^{-\mu \gamma s} ds \geq \int_{0}^{t} e^{-(\rho_{2} W_{s}-\mu \gamma s)}\mathbf{1} _{\{\rho_{2}W_{s}-\mu \gamma s \geq 0\}} ds.
		\end{align}
		Therefore, it follows that 
		\begin{align}
		\tau^{\ast \ast}_{2} = \inf \left\{ t\geq 0 : \int_{0}^{t} e^{-(\rho_{2} W_{s}-\mu \gamma s)}\mathbf{1} _{\{\rho_{2}W_{s}-\mu \gamma s \geq 0\}} ds \geq {{\gamma}^{-1}E^{-\gamma}(0)\left(\frac{\epsilon_0}{2^{1+\gamma}}-\frac{\epsilon^{\frac{1+\beta}{\beta-\gamma}}_{0}D_{1}}{E^{1+\gamma}(0)} \right)^{-1}} \right\}.  \nonumber
		\end{align}
	It can be easily seen that $\tau_2^{*} \leq \tau_2^{**} $. From Theorem \ref{thm4.1} (2), it is clear that $\tau \leq \tau_2^{*} \leq  \tau^{\ast \ast}_{2}$, and 	using the definition of $\tau^{\ast \ast}_{2},$ and Theorem 2.6 in \cite{Eug2017}, we have
		\begin{align}
	\mathbb{P}\{ \tau<\infty\} \geq \mathbb{P}\{ \tau^{\ast\ast}_{2}<\infty\} 
		&= \mathbb{P}\Bigg( \int_{0}^{\infty} e^{-(\rho_{2} W_{s}-\mu \gamma s)}\mathbf{1} _{\{\rho_{2}W_{s}-\mu \gamma s \geq 0\}} ds \geq \widetilde{N}_{1} \Bigg)  \nonumber\\
		&= \int^{\infty}_{ \widetilde{N}_{1}} \mu \gamma \sum_{n \geq 1} \exp \Bigg\{ -\left( \frac{\rho^{2}_{2}}{8}j^{2}_{\frac{2 \mu \gamma}{\rho^{2}_{2}}-1, n}\right)y \Bigg\}dy \nonumber\\
		&= \frac{8 \mu \gamma}{\rho^{2}_{2}} \sum_{n \geq 1} \frac{\exp \Bigg\{  - \frac{\rho^{2}_{2} \widetilde{N}_{1}}{8}j^{2}_{\frac{2 \mu \gamma}{\rho^{2}_{2}}-1, n} \Bigg\}}{j^{2}_{\frac{2 \mu \gamma}{\rho^{2}_{2}}-1, n}},\nonumber 
		\end{align}
	where $\Big\{j_{\frac{2 \mu \gamma}{\rho^{2}_{2}}-1, n}\Big\}_{n \geq 1}$ is the increasing sequence of all positive zeros of the Bessel function of the first kind of order $\frac{2 \mu}{\rho^{2}_{2}}-1>-1$ (for more details, see \cite{Eug2017}). Note that we have used the Monotone Convergence Theorem to obtain the final equality.
%		where $W_{s}^{(\hat{\alpha}_{1})}:=W_{s}-\hat{\alpha}_{1}s,  \ \hat{\alpha}_{1} =\frac{\mu \gamma}{\rho_{2}}.$ Setting $\nu_{1} = \frac{2\hat{\alpha}_{1}}{\rho_{2}}$ and	performing the transformation $s\mapsto \frac{4s}{{\rho_{2}}^{2}},$ we deduce that 
%		\begin{align}
%		\mathbb{P}\{ \tau<\infty\}
%		&\geq \mathbb{P}\Bigg(\frac{4}{{\rho_{2}}^{2}} \int_{0}^{\infty} \exp\{2W_{t}^{(\nu_{1})}\}dt > \widetilde{N}_{1} \Bigg).\nonumber
%		\end{align}
%		Thus, from $\eqref{ex1},$ we have
%		\begin{align}
%		\mathbb{P}\{ \tau<\infty\}\geq  \int^{\infty}_{\widetilde{N}_{1}} h_{2}(y)dy,  \ \text{ where }\ 
%		h_{2}(y)=\frac{(2/{\rho_{2}}^{2}y)^{(2\mu \gamma/{\rho_{2}}^{2})}}{y \Gamma(2\mu \gamma/{\rho_{2}}^{2})} \exp\left(-\frac{2}{{\rho_{2}}^{2}y}\right). \nonumber  
%		\end{align}
				
		\noindent \textbf{{Proof of (3).}} Suppose $\beta_{1}>\gamma_{2}>\gamma_{1}>\beta_{2}>0$. By using the same procedure as used in Theorem \ref{thm4.1} (Case 3), we obtain  
		\begin{align}
			\frac{dE(t)}{E^{(1+\beta_{2})}(t)}\geq\frac{-\mu}{E^{\beta_{2}}(t)}dt +\left( e^{\rho_{1} W_{s}} \wedge e^{\rho_{2} W_{s}}\right)\left[ \frac{\epsilon_{0}}{2^{(1+\gamma_{2})}} -\frac{\left(  \epsilon_{0}^{\frac{1+\beta_{1}}{\beta_{1}-\gamma_{2}}} D_{2} + \epsilon_{0}^{\frac{1+\gamma_{1}}{\gamma_{1}-\beta_{2}}}D_{3}+\epsilon_{0}^{\frac{ 1+\gamma_{2}}{\gamma_{2}-\beta_{2}}}D_{4}\right)}{E^{1+\beta_{2}}(0)}\right]dt. \nonumber
			\end{align}
			Thus, $E(t)$ blows up not later than the solution $I(t)$ of the equation
			\begin{align}
			\frac{dI(t)}{I^{(1+\beta_{2})}(t)}&=\frac{-\mu}{I^{\beta_{2}}(t)}dt+\left( e^{\rho_{1} W_{s}} \wedge e^{\rho_{2} W_{s}}\right)\left[ \frac{\epsilon_{0}}{2^{(1+\gamma_{2})}} -\frac{\left(  \epsilon_{0}^{\frac{1+\beta_{1}}{\beta_{1}-\gamma_{2}}} D_{2} + \epsilon_{0}^{\frac{1+\gamma_{1}}{\gamma_{1}-\beta_{2}}}D_{3}+\epsilon_{0}^{\frac{ 1+\gamma_{2}}{\gamma_{2}-\beta_{2}}}D_{4}\right)}{I^{1+\beta_{2}}(0)}\right]dt,\nonumber\\
			I(0)&=E(0), \nonumber
			\end{align}
			and 
			\begin{align}
			I(t)= e^{-\mu t}\Bigg\{ E^{-\beta_{2}}(0)-\beta_{2}&\left[ \frac{\epsilon_{0}}{2^{(1+\gamma_{2})}} -\frac{\left(  \epsilon_{0}^{\frac{1+\beta_{1}}{\beta_{1}-\gamma_{2}}} D_{2} + \epsilon_{0}^{\frac{1+\gamma_{1}}{\gamma_{1}-\beta_{2}}}D_{3}+\epsilon_{0}^{\frac{ 1+\gamma_{2}}{\gamma_{2}-\beta_{2}}}D_{4}\right)}{E^{1+\beta_{2}}(0)}\right]\nonumber\\
			&\qquad \times\int_{0}^{t} \left( e^{\rho_{1} W_{s}} \wedge e^{\rho_{2} W_{s}} \right) e^{-\mu \beta_{2} s} ds \Bigg\}^{-\frac{1}{\beta_{2}}}.\nonumber 
			\end{align}
			For the above inequality, the blow-up time is given by
		\begin{align}
		\tau^{\ast}_{3} = \inf \Bigg\{ t&\geq 0 : \int_{0}^{t}\left( e^{\rho_{1} W_{s}} \wedge e^{\rho_{2} W_{s}}\right) e^{-\mu \beta_{2}s} ds \geq \nonumber\\
		&\left. \quad{{\beta_{2}}^{-1}E^{-\beta_{2}}(0)\left[ \frac{\epsilon_{0}}{2^{(1+\gamma_{2})}} -\frac{\left(  \epsilon_{0}^{\frac{1+\beta_{1}}{\beta_{1}-\gamma_{2}}} D_{2} + \epsilon_{0}^{\frac{1+\gamma_{1}}{\gamma_{1}-\beta_{2}}}D_{3}+\epsilon_{0}^{\frac{ 1+\gamma_{2}}{\gamma_{2}-\beta_{2}}}D_{4}\right)}{E^{1+\beta_{2}}(0)}\right]^{-1}}\right\}.  \nonumber
		\end{align}
		Using the fact  \eqref{f1}, we have
		\begin{align}
		\tau^{\ast \ast}_{3} = \inf \Bigg\{ t\geq 0 : \int_{0}^{t}&  e^{-\left( \rho_{2} W_{s}-\mu \beta_{2}s\right) }\mathbf{1} _{\{\rho_{2}W_{s}-\mu \gamma s \geq 0\}} ds \geq {\beta_{2}}^{-1}E^{-\beta_{2}}(0)\nonumber\\
		&\left. \times \left[ \frac{\epsilon_{0}}{2^{(1+\gamma_{2})}} -\frac{\left(  \epsilon_{0}^{\frac{1+\beta_{1}}{\beta_{1}-\gamma_{2}}} D_{2} + \epsilon_{0}^{\frac{1+\gamma_{1}}{\gamma_{1}-\beta_{2}}}D_{3}+\epsilon_{0}^{\frac{ 1+\gamma_{2}}{\gamma_{2}-\beta_{2}}}D_{4}\right)}{E^{1+\beta_{2}}(0)}\right]^{-1}\right\}, \nonumber
	    \end{align}
	and     it can be easily seen that $\tau_3^{*} \leq \tau_3^{**} $. From Theorem \ref{thm4.1} (3), it is clear that $\tau \leq \tau_3^{*} \leq  \tau^{\ast \ast}_{3}$, and using the definition of $\tau^{\ast \ast}_{3},$ and Theorem 2.6 in \cite{Eug2017}, we have
		%\begin{center}
		%$\widetilde{N}= {(q-1)^{-1}E^{1-q}(0)\left( 2^{-n}\epsilon_{0}+2^{-q}\epsilon_{0}- \frac{\left(\epsilon^{\frac{m}{m-n}}_{0}D_{2}+ \epsilon^{\frac{p}{p-q}}_{0}D_{3} \right)}{E^{q}(0)} \right)^{-1}}$
		%\end{center} 
		\begin{align}
	\mathbb{P}\{ \tau<\infty\}  \geq \mathbb{P}\{ \tau^{\ast\ast}_{3}<\infty\} &=\mathbb{P}\Bigg( \int_{0}^{\infty} e^{-\left(\rho_{2} W_{s}-\mu \beta_{2}s\right) }\mathbf{1} _{\{\rho_{2}W_{s}-\mu \beta_{2} s \geq 0\}}ds\geq \widetilde{N}_2 \Bigg)  \nonumber\\
		&= \int^{\infty}_{ \widetilde{N}_{2}} \mu \beta_{2} \sum_{n \geq 1} \exp \Bigg\{ -\left( \frac{\rho^{2}_{2}}{8}j^{2}_{\frac{2 \mu \beta_{2}}{\rho^{2}_{2}}-1, n}\right)y \Bigg\}dy \nonumber\\
		&= \frac{8 \mu \beta_{2}}{\rho^{2}_{2}} \sum_{n \geq 1} \frac{\exp \Bigg\{  - \frac{\rho^{2}_{2}\widetilde{N}_2}{8}j^{2}_{\frac{2 \mu \beta_{2}}{\rho^{2}_{2}}-1, n} \Bigg\}}{j^{2}_{\frac{2 \mu \beta_{2}}{\rho^{2}_{2}}-1, n}},\nonumber 
		\end{align}
%	where $W_{s}^{(\hat{\alpha}_{2})}:=W_{s}-\hat{\alpha}_{2}s,  \ \hat{\alpha}_{2} =\frac{\mu \beta_{2}}{\rho_{2}}.$ By setting $\nu_{2} = \frac{2\hat{\alpha}_{2}}{\rho_{2}}$ and 
%	 performing the transformation $s\mapsto \frac{4s}{{\rho_{2}}^{2}},$ we deduce that 
%	\begin{align}
%	\mathbb{P}\{ \tau<\infty\}
%	&\geq \mathbb{P}\Bigg(\frac{4}{{\rho_{2}}^{2}} \int_{0}^{\infty} \exp\{2W_{t}^{(\nu_{2})}\}dt > \widetilde{N} \Bigg).\nonumber
%	\end{align}
%	Thus, from $\eqref{ex1},$ we have
%	\begin{align}
%	\mathbb{P}\{ \tau<\infty\}\geq  \int^{\infty}_{\widetilde{N}} h_{3}(y)dy,  \ \text{ where }\ 
%	h_{3}(y)=\frac{(2/{\rho_{2}}^{2}y)^{(2\mu \beta_{2}/{\rho_{2}}^{2})}}{y \Gamma(2\mu \beta_{2}/{\rho_{2}}^{2})} \exp\left(-\frac{2}{{\rho_{2}}^{2}y}\right), \nonumber  
%	\end{align}
% Note that we have used the Monotone Convergence Theorem to obtain the last equality, 
which completes the proof. 
	\end{proof}
	The following theorem provides an upper bound for the probability of blow-up solution of the system \eqref{b1}-\eqref{b2}. Let us take 
	\begin{align}\label{new1}
	\left.
	\begin{aligned}
	A&=\min\Big\{(1+\beta_{1})k_{1}-k_{1}, (1+\beta_{2})k_{2}-k_{1}, (1+\gamma_{1})k_{2}-k_{1}, (1+\gamma_{2})k_{2}-k_{1}\Big\}, \\
	B&=\max\Big\{(1+\beta_{1})k_{1}-k_{1}, (1+\beta_{2})k_{2}-k_{1}, (1+\gamma_{1})k_{2}-k_{1}, (1+\gamma_{2})k_{2}-k_{1} \Big\},\\
    b_{1}&=\min\Big\{ \beta_{1}, \beta_{2},\gamma_{1} , \gamma_{2} \Big\}\ \mbox{and}\ \Lambda=\lambda+\frac{k^{2}_{1}\wedge k^{2}_{2}}{2} . 
	\end{aligned}
	\right\}   
	\end{align}

	\begin{theorem}
		Assume that $\beta_{1},\beta_{2},\gamma_{1},\gamma_{2}>0$ along with  $A>0$ and $V_{1}=V_{2}=0$. Let $f_{1}=L_{1}\psi$ and $f_{2}=L_{2}\psi,$ for some positive constants $L_{1}$ and $L_{2}$ with $L_{1}\leq L_{2}.$ Then an upper bound for the probability of blow-up solution of the system \eqref{b1}-\eqref{b2} is given by
		\begin{align}
		\mathbb{P}\{ \tau<\infty\}\leq \int_{\widetilde{N}_3}^{\infty} h_{3}(y)dy, \nonumber 
		\end{align}
		where $\tau'' \leq \tau$,
		\begin{align}
			\tau''&=\inf \Bigg\{ t\geq 0 : \int_{0}^{t} e^{B W_{r}-\Lambda b_{1} r} dr \geq \widetilde{N}_3\Bigg\}, \nonumber\\
		\widetilde{N}_3&= \min \Bigg\{ \frac{1}{(\beta_{1}+\gamma_{2}+1)(L^{\beta_{1}}_{1}\left\|\psi \right\|_{\infty}^{\beta_{1}}+L^{\gamma_{2}}_{1}\left\|\psi \right\|_{\infty}^{\gamma_{2}})}, \frac{1}{(\gamma_{1}+\beta_{2}+1)(L^{\gamma_{1}}_{2}\left\|\psi \right\|_{\infty}^{\gamma_{1}}+L^{\beta_{2}}_{2}\left\|\psi \right\|_{\infty}^{\beta_{2}})} \Bigg\}\nonumber\\&\qquad-\frac{1}{\Lambda b_{1}},\nonumber\\
		 h_{3}(y)&=\frac{(2/B^{2}y)^{(2\Lambda b_{1} /B^{2})}}{y \Gamma(2\Lambda b_{1}/B^{2})} \exp\left(-\frac{2}{B^{2}y}\right)\ and\ B,\ \Lambda,\ b_{1}\ \mbox{are defined in } \eqref{new1}.\nonumber
		\end{align}
		\end{theorem}
	\begin{proof}
		By using the same procedure as used in Theorem \ref{thm5.1}, we have 
		\begin{align}
		\tau'=\inf \Bigg\{ t\geq 0 : \int_{0}^{t} \left(e^{AW_{r}} \vee e^{BW_{r}} \right) e^{-\Lambda b_{1} r} dr &\geq \min \Bigg\{ \frac{1}{(\beta_{1}+\gamma_{2}+1)(L^{\beta_{1}}_{1}\left\|\psi \right\|_{\infty}^{\beta_{1}}+L^{\gamma_{2}}_{1}\left\|\psi \right\|_{\infty}^{\gamma_{2}})},\nonumber \\ &\quad\frac{1}{(\gamma_{1}+\beta_{2}+1)(L^{\gamma_{1}}_{2}\left\|\psi \right\|_{\infty}^{\gamma_{1}}+L^{\beta_{2}}_{2}\left\|\psi \right\|_{\infty}^{\beta_{2}})} \Bigg\} \Bigg\}. \nonumber
		\end{align}
		From the Corollary \ref{cor1} it is clear that $\tau' \leq \tau_{\ast \ast} \leq \tau$. Note that $0<A\leq B$ and so
		\begin{align}
		\int_{0}^{t} \left( e^{A W_{s}} \vee e^{B W_{s}}\right) e^{-\Lambda b_{1} s} ds &= \int_{0}^{t} e^{A W_{s}-\Lambda b_{1} s}  \mathbf{1} _{\{W_{s}<0\}} ds+\int_{0}^{t} e^{B W_{s}-\Lambda b_{1}s}  \mathbf{1} _{\{W_{s} \geq 0\}} ds \nonumber\\
		&\leq \int_{0}^{\infty} e^{-\Lambda b_{1} s} ds+\int_{0}^{t} e^{B W_{s}-\Lambda b_{1} s} ds \nonumber\\
		&=\frac{1}{\Lambda b_{1}}+\int_{0}^{t} e^{B W_{s}-\Lambda b_{1} s} ds,
		\end{align}
		for all $t \geq 0$.	Therefore, it follows that 	
		\begin{align}
		\tau''=\inf \Bigg\{ t\geq 0 : \int_{0}^{t} e^{B W_{r}-\Lambda b_{1} r} dr &\geq \min \Bigg\{ \frac{1}{(\beta_{1}+\gamma_{2}+1)(L^{\beta_{1}}_{1}\left\|\psi \right\|_{\infty}^{\beta_{1}}+L^{\gamma_{2}}_{1}\left\|\psi \right\|_{\infty}^{\gamma_{2}})},\nonumber \\ &\quad\frac{1}{(\gamma_{1}+\beta_{2}+1)(L^{\gamma_{1}}_{2}\left\|\psi \right\|_{\infty}^{\gamma_{1}}+L^{\beta_{2}}_{2}\left\|\psi \right\|_{\infty}^{\beta_{2}})} \Bigg\} -\frac{1}{\Lambda b_{1}}\Bigg\}, \nonumber
		\end{align}
	 and $\tau'' \leq \tau'$. Using the definition of $\tau''$, we have 
		\begin{align}
		\mathbb{P}\{ \tau<\infty\} \leq \mathbb{P}( \tau''<\infty) &= \mathbb{P}\Bigg(\int_{0}^{\infty} e^{B W_{r}-\Lambda b_{1} r} dr \geq \widetilde{N}_3\Bigg)= \mathbb{P}\Bigg( \int_{0}^{\infty} \exp\{B W_{s}^{(\hat{\alpha})}\}ds \geq \widetilde{N}_3 \Bigg), \nonumber
		\end{align}	
		where $W_{s}^{(\hat{\alpha})}:=W_{s}-\hat{\alpha}s$ and $\hat{\alpha} =\frac{\Lambda b_{1}}{B}.$ By performing the transformation $s\mapsto \frac{4t}{B^2}$ and setting $\nu = \frac{2\hat{\alpha}}{B}$,  we get
		\begin{align}
		\mathbb{P}\{ \tau<\infty\}
		&\leq \mathbb{P}\Bigg(\frac{4}{B^{2}} \int_{0}^{\infty} \exp\{2 W_{t}^{(\nu)}\}dt \geq \widetilde{N}_3 \Bigg).\nonumber
		\end{align}
%		It follows from Chapter 6, Corollary 1.2, \cite{yor2001}, that 
%		\begin{equation}
%		\int_{0}^{\infty} \exp\{ 2W_{t}^{(\nu)}\}dt \overset{Law}{=} \frac{1}{2Z_{\nu}},
%		\end{equation}
%		where $Z_{\nu}$ is a Gamma random variable with parameter $\nu,$ that is, $\mathbb{P} (Z_{\nu} \in dy)= \frac{1}{\Gamma(\nu)}e^{-y} y^{\nu-1} dy.$ 
		Therefore, we deduce 
		\begin{align}
		\mathbb{P}\{ \tau<\infty\}\leq \int_{\widetilde{N}_3}^{\infty} h_{3}(y)dy, \nonumber 
		\end{align}
		%where 
		%\begin{align}
		%&\widetilde{N}_3= \min \Bigg\{ \frac{1}{(\beta_{1}+\gamma_{2}+1)(L^{\beta_{1}}_{1}\left\|\psi \right\|_{\infty}^{\beta_{1}}+L^{\gamma_{2}}_{1}\left\|\psi \right\|_{\infty}^{\gamma_{2}})}, \frac{1}{(\gamma_{1}+\beta_{2}+1)(L^{\gamma_{1}}_{2}\left\|\psi \right\|_{\infty}^{\gamma_{1}}+L^{\beta_{2}}_{2}\left\|\psi \right\|_{\infty}^{\beta_{2}})} \Bigg\}-\frac{1}{\Lambda b_{1}}\nonumber\\
		%&\mbox{and}\ h_{3}(y)=\frac{(2/A^{2}y)^{(2\Lambda b_{1} /A^{2})}}{y \Gamma(2\Lambda b_{1}/A^{2})} \exp\left(-\frac{2}{A^{2}y}\right),  \nonumber 
%		\end{align}
		 which completes the proof. 
\end{proof}

	\section{Semilinear SPDEs with two-dimensional Brownian motions}{\label{sec7}}
	In this section, we consider the following  system of semilinear SPDEs driven by two-dimensional Brownian motions of the form:
	\begin{equation}\label{Rb1}
	\left\{
	\begin{aligned}
	du_{1}(t,x)=&\left[(\Delta+V_{1}) u_{1}(t,x)+C_{11} u_{1}^{1+\beta_{1}}(t,x)+C_{12}u^{1+\gamma_{2}}_{2}(t,x) \right]dt\\
	&\quad+k_{11}u_{1}(t,x)dW_{1}(t)+k_{12}u_{1}(t,x)dW_{2}(t), \\
	du_{2}(t,x)=&\left[(\Delta+V_{2}) u_{2}(t,x)+C_{21}u_{1}^{1+\gamma_{1}}(t,x)+C_{22}u_{2}^{1+\beta_{2}}(t,x) \right]dt\\
	&\quad+k_{21}u_{2}(t,x)dW_{1}(t)+k_{22}u_{2}(t,x)dW_{2}(t), 
	\end{aligned}
	\right.
	\end{equation}
	for $x \in D,t>0$, along	with the Dirichlet conditions
	\begin{align} \label{Rb2}
	u_{i}(0,x)&=f_{i}(x)\geq 0, \ \ x \in D ,\nonumber\\
	u_{i}(t,x)&=0, \ \ t\geq 0, \ \ x \in \partial D, \ \ i=1,2.
	\end{align}
	Here, $\beta_{1}\geq \gamma_{2}\geq \gamma_{1}\geq \beta_{2}>0$ are constants, $D\subset \mathbb{R}^{d}$ is a bounded and smooth domain, $C_{ij}, \ k_{ij}\neq 0$ are constants, $i,j=1,2,$ $f_{i}$ are of class $C^{2}$ and not identically zero for $i=1,2$ and $(W_{1}(\cdot),W_{2}(\cdot))$ is a standard two-dimensional Brownian motion with respect to filtered probability space $\left( \Omega, \mathscr{F}, (\mathscr{F}_{t})_{t \geq 0}, \mathbb{P} \right).$
	
	First, we consider the system \eqref{Rb1}-\eqref{Rb2} with the parameters $V_{i}=\lambda+\frac{k^{2}_{i1}+k_{i2}^{2}}{2},$
	\begin{equation}\label{Ra4}
		\left\{
	\begin{aligned}
	\eta_{i}:= \mbox{max}\{ C_{i1}, C_{i2}\},\ \ &\sigma_{i} =\min \{ C_{i1}, C_{i2} \} \ \ i=1,2, \\  (1+\beta_{1})k_{11}-k_{11}&=(1+\gamma_{2})k_{21}-k_{11} =:\rho_{11}, \\
	(1+\beta_{1})k_{12}-k_{12}&=(1+\gamma_{2})k_{22}-k_{12}=:\rho_{12}, \\ 
	(1+\beta_{2})k_{21}-k_{21}&=(1+\gamma_{1})k_{11}-k_{21}=:\rho_{21}, \\
	(1+\beta_{2})k_{22}-k_{22}&=(1+\gamma_{1})k_{12}-k_{22}=:\rho_{22}. 
	\end{aligned}
\right.
	\end{equation}
	Our aim is to find random times $\sigma_{*}$ and $\sigma^{*}$ such that $0\leq \sigma_*\leq \tau\leq \sigma^*$, where  $\tau$  is the blow-up time of the system \eqref{Rb1}-\eqref{Rb2}. 
	
	We only state the following theorem and one can use the same ideas as in the proof of Theorem $\ref{t2}$ to obtain the required result. 
	\begin{theorem}\label{Rt2} 
	Assume that the conditions (\ref{Ra4}) hold, and let the initial data be of the form
	\begin{eqnarray} \label{Rq1}
	f_{1}=M_{1} \psi \ \ \mbox{and} \ \ f_{2}=M_{2} \psi ,
	\end{eqnarray} 
	for some positive constants $M_{1}$ and $M_{2}$ with $M_{1} \leq M_{2}$. Then $\sigma_{\ast}\leq\tau$, where $\sigma_{\ast}$ is given by
	%\begin{align}
	%\sigma_{\ast} = \inf \Bigg\{ t\geq 0 : &\int_{0}^{t}\exp\{{\rho_{11}W_{1}(r)+\rho_{12}W_{2}(r)}\}dr
%\nonumber\\ & 	\geq \min \bigg\{ \frac{1}{\eta_{1}(\beta_{1}+\gamma_{2}+1)(M^{\beta_{1}}_{1}\left\|\psi \right\|_{\infty}^{\beta_{1}}+M^{\gamma_{2}}_{1}\left\|\psi \right\|_{\infty}^{\gamma_{2}})}, \nonumber\\ & \hspace{1 in} \frac{1}{\eta_{2}(\gamma_{1}+\beta_{2}+1)(M^{\gamma_{1}}_{2}\left\|\psi \right\|_{\infty}^{\gamma_{1}}+M^{\beta_{2}}_{2}\left\|\psi \right\|_{\infty}^{\beta_{2}})} \bigg\}\nonumber\\
	%\mathrm{ or }\ &\int_{0}^{t} \{\exp\{{\rho_{21}W_{1}(r)+\rho_{22}W_{2}(r)}\}dr
	%\nonumber\\&\geq \min \bigg\{ \frac{1}{\eta_{1}(\beta_{1}+\gamma_{2}+1)(M^{\beta_{1}}_{1}\left\|\psi \right\|_{\infty}^{\beta_{1}}+M^{\gamma_{2}}_{1}\left\|\psi \right\|_{\infty}^{\gamma_{2}})}, \nonumber\\ & \hspace{1 in} \frac{1}{\eta_{2}(\gamma_{1}+\beta_{2}+1)(M^{\gamma_{1}}_{2}\left\|\psi \right\|_{\infty}^{\gamma_{1}}+M^{\beta_{2}}_{2}\left\|\psi \right\|_{\infty}^{\beta_{2}-1})} \bigg\}   \Bigg\}. \nonumber
	%\end{align}
	
	\begin{align}
	\sigma_{\ast} = \inf \Bigg\{ t\geq 0 : &\int_{0}^{t}\exp\{{\rho_{11}W_{1}(r)+\rho_{12}W_{2}(r)}\}dr
 	\geq\frac{1}{\eta_{1}(\beta_{1}+\gamma_{2}+1)(M^{\beta_{1}}_{1}\left\|\psi \right\|_{\infty}^{\beta_{1}}+M^{\gamma_{2}}_{1}\left\|\psi \right\|_{\infty}^{\gamma_{2}})} \nonumber\\
	\mathrm{ or }\ &\int_{0}^{t} \{\exp\{{\rho_{21}W_{1}(r)+\rho_{22}W_{2}(r)}\}dr\geq  \frac{1}{\eta_{2}(\gamma_{1}+\beta_{2}+1)(M^{\gamma_{1}}_{2}\left\|\psi \right\|_{\infty}^{\gamma_{1}}+M^{\beta_{2}}_{2}\left\|\psi \right\|_{\infty}^{\beta_{2}})}   \Bigg\}. \nonumber
	\end{align}
	\end{theorem}
	The following theorem provides an upper bound for the blow-up in finite time to the system \eqref{Rb1}-\eqref{Rb2} and the proof follows in a similar fashion as in Theorem \ref{thm4.1}.
	\begin{theorem} \label{Rt1} Let $\beta_{1},\beta_{2},\gamma_{1},\gamma_{2}>0$ and for each initial values $f_{i} \geq 0, \ i=1,2,$ we have the following: 
	\begin{itemize}
	\item[1.] Assume that $\beta_{1}=\gamma_{2}=\gamma_{1}=\beta_{2}=m\ (say)>0$. Then $\tau \leq \sigma^{\ast}$, where
	\begin{eqnarray}
	\sigma^{\ast}=\inf \left\lbrace t\geq 0 : \int_{0}^{t} \exp\{{\rho W_{1}(s)+\rho W_{2}(s)}\} ds \geq \frac{2^{(1+m)}}{m(\sigma_{1}+\sigma_{2})}E^{-m}(0) \right\rbrace ,\nonumber
	\end{eqnarray}  
	and $\rho_{11}=\rho_{12}=\rho_{21}=\rho_{22}=\rho$.
	\item[2.] If $\beta_{1}=\gamma_{1}=\beta, \ \beta_{2}=\gamma_{2}=\gamma$ with $\beta>\gamma>0,$ and $D_{1}=\left( \frac{\beta-\gamma}{1+\beta} \right)\left(\frac{1+\beta}{1+\gamma} \right)^{\frac{1+\gamma}{\beta-\gamma}},$
	\begin{align}{\label{nc2}}
	\epsilon_{0}&\leq \min \Bigg\{ 1, \left( h_{1}(0)/D_{1}^{1/1+\gamma} \right)^{\beta-\gamma} \Bigg\}.
	\end{align} 
	Assume that 
	\begin{align}
	2^{-(1+\gamma)}\epsilon_{0}E^{1+\gamma}(0)\geq \epsilon_{0}^{\frac{1+\beta}{\beta-\gamma}}D_{1}.
	\end{align}
	Then $\tau\leq \sigma^{\ast}$, where 
	\begin{align}
	\sigma^{\ast} &= \inf \Bigg\{ t\geq 0 : \int_{0}^{t} \exp\{\rho_{11} W_{1}(s)+\rho_{12} W_{2}(s)\} \wedge \exp\{\rho_{21} W_{1}(s)+\rho_{22} W_{2}(s)\}ds \nonumber\\
	&\hspace{1 in} \left.\geq \left[(\sigma_{1}+\sigma_{2})\gamma E^{\gamma}(0)\left(\frac{\epsilon_0}{2^{1+\gamma}}-\frac{ \epsilon_{0}^{\frac{1+\beta}{\beta-\gamma}}D_{1}}{E^{1+\gamma}(0)}\right)\right]^{-1}\right\}. \nonumber
	\end{align}			
	\item[3.] Let $\beta_{1}>\gamma_{2}>\gamma_{1}>\beta_{2}>0$ and let $D_{2}=\left( \frac{1+\beta_{1}}{1+\gamma_{2}} \right)^{\frac{1+\gamma_{2}}{\beta_{1}-\gamma_{2}}}\frac{\beta_{1}-\gamma_{2}}{1+\beta_{1}}, \ D_{3}=\left( \frac{1+\gamma_{1}}{1+\beta_{2}} \right)^{\frac{1+\beta_{2}}{\gamma_{1}-\beta_{2}}}\frac{\gamma_{1}-\beta_{2}}{1+\gamma_{1}},\\ D_{4}=\frac{\gamma_{2}-\beta_{2}}{1+\gamma_{2}}\left( \frac{1+\gamma_{2}}{1+\beta_{2}} \right)^{\frac{1+\beta_{2}}{\gamma_{2}-\beta_{2}}}$ and \begin{align}\label{vh4} 
	\epsilon_{0}\leq \min \Bigg\{ 1, \left( h_{1}(0)/ D_{2}^{1/1+\gamma_{2}} \right)^{\beta_{1}-\gamma_{2}}, \left( h_{1}(0)/D_{3}^{1/1+\beta_{2}} \right)^{\gamma_{1}-\beta_{2}} \Bigg\}.
	\end{align} 
	Assume that 
	\begin{eqnarray} \label{Rtm2}
	2^{-(1+\gamma_{2})}\epsilon_{0}E^{1+\beta_{2}}(0)(\epsilon_{0}\sigma_{1}+\sigma_{2})\geq  \sigma_{1}\epsilon_{0}^{\frac{1+\beta_{1}}{\beta_{1}-\gamma_{2}}}D_{2} +\sigma_{2} \epsilon_{0}^{\frac{1+\gamma_{1}}{\gamma_{1}-\beta_{2}}}D_{3}+\sigma_{1}\epsilon_{0}^{\frac{ 1+\gamma_{2}}{\gamma_{2}-\beta_{2}}}D_{4}.  
	\end{eqnarray}
	Then $\tau\leq\sigma^{\ast}$, where 
	\begin{align}
	\sigma^{\ast} = \inf \Bigg\{ & t\geq 0 : \int_{0}^{t} \exp\{\rho_{11} W_{1}(s)+\rho_{12} W_{2}(s)\} \wedge \exp\{\rho_{21} W_{1}(s)+\rho_{22} W_{2}(s)\}ds\nonumber\\&\left. \geq \left\{{\beta_{2}E^{\beta_{2}}(0)\left[\frac{(\epsilon_{0}\sigma_{1}+\sigma_{2})\epsilon_{0}}{2^{(1+\gamma_{2})}} -\frac{\left( \sigma_{1}\epsilon_{0}^{\frac{1+\beta_{1}}{\beta_{1}-\gamma_{2}}}D_{2} +\sigma_{2} \epsilon_{0}^{\frac{1+\gamma_{1}}{\gamma_{1}-\beta_{2}}}D_{3}+\sigma_{1}\epsilon_{0}^{\frac{ 1+\gamma_{2}}{\gamma_{2}-\beta_{2}}}D_{4} \right)}{E^{1+\beta_{2}}(0)}  \right]}\right\}^{-1} \right\},  \nonumber
	\end{align}
	where $\rho_{11},\rho_{12},\rho_{21},\rho_{22}$ are defined in \eqref{Ra4} and $E(0)$ is given in \eqref{s1}.
	\end{itemize}
	\end{theorem}
	
	\subsection{Probability of finite time blow-up}
	In this section, we consider the system \eqref{Rb1}-\eqref{Rb2} along with   the set of parameters $\beta_{1},\beta_{2},\gamma_{1},\gamma_{2}>0$ satisying  \eqref{Ra4} and obtain the upper bounds for the probability of non-explosive solution of the system \eqref{Rb1}-\eqref{Rb2}. Further, we choose the other parameters as follows:
	\begin{equation}\label{p1}
	\left\{
	\begin{aligned}
   &V_1=V_2=0,\ C_{11}=C_{12}=C_{21}=C_{22}=1,\\
	&l_{1}:= \frac{k_{11}^{2}+ k_{12}^{2}}{2}, \ l_{2}:= \frac{k_{21}^{2}+ k_{22}^{2}}{2}.   
	\end{aligned}
	\right.
	\end{equation}
    The following result provides a lower bound for the probability of blow-up solution of the system \eqref{Rb1}-\eqref{Rb2}.
	\begin{theorem}\label{thm7.3}
	For each positive initial value $f_{i}\ (i=1,2)$ with $E(0)$ defined in $\eqref{s1},$ and parameters $\epsilon_{0}, D_{1},D_{2},D_{3}$ and $D_{4}$ as defined in Theorem \ref{Rt1} along with \eqref{p1}, we have the following results:
	\begin{itemize}
		\item[1.] If $\beta_{1}=\gamma_{2}=\gamma_{1}=\beta_{2}=m\ (say)$ with $m>0$, then  a lower bound for the probability of blow-up solution of the system \eqref{Rb1}-\eqref{Rb2} is given by 
		\begin{align}
	     \mathbb{P}(\tau<\infty)\geq \mathbb{P}\left\{m E^{m}(0)> 2^{m}\rho^{2} X{(\alpha_{1},1)} \right\},\nonumber 
	     \end{align}
	     whose blow-up time is given by
	     	\begin{eqnarray}
	     	\tau^{\ast}_{1}=\inf \left\lbrace t\geq 0 : \int_{0}^{t} \exp\{\rho W_{1}(t)+\rho W_{2}(t)-\eta ms\} ds \geq 2^{m}{m}^{-1}E^{-m}(0) \right\rbrace, \nonumber
	     	\end{eqnarray} 
	     where $\alpha_{1}=\displaystyle\frac{\eta m }{\rho^{2}},\ \rho_{11}=\rho_{21}=\rho_{12}=\rho_{22}:=\rho.$
		
		\item[2.] If $\beta_{1}=\gamma_{1}=\beta, \ \gamma_{2}=\beta_{2}=\gamma,\ \beta>\gamma>0$ with $\rho_{21}\geq\rho_{11}>0$ and $\rho_{22}\geq \rho_{12}>0$, then a lower bound for the probability of blow-up solution of the system \eqref{Rb1}-\eqref{Rb2} is given by 
	    \begin{align*}
	     \mathbb{P}(\tau<\infty)\geq \frac{8 \eta \gamma}{\rho_{11}^{2}+\rho_{12}^{2}} \sum_{n \geq 1} \frac{\exp \Bigg\{  - \frac{(\rho_{11}^{2}+\rho_{12}^{2})a_{1}}{8}j^{2}_{\frac{2 \eta \gamma}{\rho_{11}^{2}+\rho_{12}^{2}}-1, n} \Bigg\}}{j^{2}_{\frac{2 \eta \gamma}{(\rho_{11}^{2}+\rho_{12}^{2})}-1, n}},\nonumber 
	    \end{align*}
	    	whose blow-up time is given by 
	    	\begin{eqnarray}
	    	\hspace{-0.3 in}\tau^{\ast}_{2} = \inf \left\lbrace  t\geq 0 : \int_{0}^{t}\left(e^{\rho_{11} W_{1}(s)+\rho_{12} W_{2}(s)} \wedge e^{\rho_{21} W_{1}(s)+\rho_{22} W_{2}(s)} \right) e^{-\eta \gamma s}ds \geq a_{1} \right\rbrace,  \nonumber 
	    	\end{eqnarray} 
	    where $a_{1}=\frac{2^{1+\gamma}\gamma^{-1}E(0)}{\epsilon_{0} E^{1+\gamma}(0)-2^{1+\gamma}D_{1}\epsilon_{0}^{\frac{1+\beta}{\beta-\gamma}}}.$

		\item[3.] If $\beta_{1}>\gamma_{2}>\gamma_{1}>\beta_{2}>0,\ \rho_{21}\geq\rho_{11}>0$ and $\rho_{22}\geq \rho_{12}>0$,  
		%the condition  $$ \eta \beta_{2} E(0) +\left(\epsilon_{0}^{\frac{1+\beta_{1}}{\beta_{1}-\gamma_{2}}} D_{2} + \epsilon_{0}^{\frac{1+\gamma_{1}}{\gamma_{1}-\beta_{2}}}D_{3}+\epsilon_{0}^{\frac{ 1+\gamma_{2}}{\gamma_{2}-\beta_{2}}}D_{4}\right)\geq\\ \epsilon_{0}2^{-(1+\gamma_{2})}E^{1+\beta_{2}}(0),$$}holds,
		then a lower bound for the probability of blow-up solution of the system \eqref{Rb1}-\eqref{Rb2} is given by 
	\begin{align}
	 \mathbb{P} (\tau< \infty)&\geq \frac{8 \eta \beta_{2}}{\rho_{11}^{2}+\rho_{12}^{2}} \sum_{n \geq 1} \frac{\exp \Bigg\{  - \frac{(\rho_{11}^{2}+\rho_{12}^{2})a_{1}}{8}j^{2}_{\frac{2 \eta \beta_{2}}{\rho_{11}^{2}+\rho_{12}^{2}}-1, n} \Bigg\}}{j^{2}_{\frac{2 \eta \beta_{2}}{\rho_{11}^{2}+\rho_{12}^{2}}-1, n}},\nonumber 
	\end{align}
    whose blow-up time is given by
	\begin{align}
\tau^{\ast \ast}_{3}&= \inf \Bigg\{ t\geq 0 : \int_{0}^{t} e^{-(\rho_{11} W_{1}(s)+\rho_{12} W_{2}(s)-\eta \beta_{2} s)} \mathbf{1}_{\{\rho_{11}W_{1}(s)+\rho_{12}W_{2}(s)-\eta \gamma s\}} ds \geq a_{2} \Bigg\}, \nonumber 
	\end{align}
	where $\eta=(\lambda+l_{1}+l_{2}),\ a_{2}={{{\beta_{2}}^{-1}E^{-\beta_{2}}(0)\left[ \frac{\epsilon_{0}}{2^{(1+\gamma_{2})}} -\frac{\left(  \epsilon_{0}^{\frac{1+\beta_{1}}{\beta_{1}-\gamma_{2}}}D_{2} + \epsilon_{0}^{\frac{1+\gamma_{1}}{\gamma_{1}-\beta_{2}}}D_{3}+\epsilon_{0}^{\frac{ 1+\gamma_{2}}{\gamma_{2}-\beta_{2}}}D_{4}\right)}{E^{1+\beta_{2}}(0)}\right]^{-1}}},\\
	\mbox{and}\ \ \rho_{11}, \rho_{12},\rho_{21}, \rho_{22}$ are defined in \eqref{Ra4}.
	\end{itemize}
	
	\end{theorem}
	\begin{proof}
	First, we derive the weak formulation of \eqref{Rb1} by using the random transformation $$v_{i}(t,x)=\exp\left\{-k_{i1}W_{1}(t)-k_{i2}W_{2}(t)\right\}u_{i}(t,x), \ \ t\geq 0, \ \ x\in D, \ \ i=1,2.$$ Proceeding in the same way as done in the proof of Theorem $\ref{thm6.1}$ and using \eqref{as6},
%%	\begin{equation}
%%	\begin{aligned} 
%%	v_{i}(t,\psi) =& \ v_{i}(0,\psi)+\int_{0}^{t} \left[ v_{i}(s,\Delta \psi)-\frac{k_{i1}^{2}+k_{i2}^{2}}{2} v_{i}(s,\psi) \right] ds \\ 
%%	& \quad + \int_{0}^{t} \exp\{{-k_{i1}W_{1}(s)-k_{i2}W_{2}(s)}\} \left( e^{k_{i1}W_{1}(s)+k_{i2}W_{2}(s)} v_{i} \right)^{1+\beta_{i}}(s,\psi)ds \\
%%	& \quad + \int_{0}^{t} \exp\{{-k_{i1}W_{1}(s)-k_{i2}W_{2}(s)}\} \left( e^{k_{j1}W_{1}(s)+k_{j2}W_{2}(s)} v_{j} \right)^{1+\gamma_{j}}(s,\psi)ds,
%%	\end{aligned} 
%%	\end{equation}
	 we get 
	\begin{equation}
	\begin{aligned}
	\frac{\partial v_{i}(t,\psi)}{\partial t} =&-\left(  \lambda+\frac{k_{i1}^{2}+k_{i2}^{2}}{2} \right)v_{i}(t, \psi) \nonumber\\&\quad+\exp\{{-k_{i1}W_{1}(t)-k_{i2}W_{2}(t)}\} \left( e^{k_{i1}W_{1}(t)+k_{i2}W_{2}(t)} v_{i} \right)^{1+\beta_{i}}(t,\psi)\nonumber\\
	& \quad + \exp\{{-k_{i1}W_{1}(t)-k_{i2}W_{2}(t)}\} \left( e^{k_{j1}W_{1}(t)+k_{j2}W_{2}(t)} v_{j} \right)^{1+\gamma_{j}}(t,\psi),\nonumber 
	\end{aligned}
	\end{equation}
	for a.e. $t\geq 0,\ \left\lbrace j \right\rbrace=\left\lbrace1,2\right\rbrace/\{i\}.$	By using Jensen's inequality, we have
	\begin{equation}
	\begin{aligned}
	\frac{\partial v_{i}(t,\psi)}{\partial t} \geq&-\left(  \lambda+\frac{k_{i1}^{2}+k_{i2}^{2}}{2} \right)v_{i}(t, \psi) +\exp\{ \rho_{i1}W_{1}(t)+\rho_{i2}W_{2}(t)\} v_{i}(s,\psi)^{1+\beta_{i}}\\
	& \quad +\exp\{ \rho_{i1}W_{1}(t)+\rho_{i2}W_{2}(t)\} v_{j}(s,\psi)^{1+\gamma_{j}}.
	\end{aligned}
	\end{equation}
Thus, $v_{i}(t,\psi) \geq I_{i}(t), \ t \geq 0, \ i=1,2,$ 	where 
	\begin{equation}
	\left\{
	\begin{aligned}
	\frac{dI_{1}(t)}{dt}&=-\left(\lambda+\frac{k_{11}^{2}+k_{12}^{2}}{2} \right)I_{1}(t)+\exp\{{\rho_{11} W_{1}(t)}+{\rho_{12} W_{2}(t)}\} \left[ I_{1}^{1+\beta_{1}}(t)+I_{2}^{1+\gamma_{2}}(t) \right], \nonumber\\ \frac{dI_{2}(t)}{dt}&=-\left(\lambda+\frac{k_{21}^{2}+k_{22}^{2}}{2} \right)I_{2}(t)+\exp\{{\rho_{21} W_{1}(t)}+{\rho_{22} W_{2}(t)}\} \left[ I_{1}^{1+\beta_{2}}(t)+I_{2}^{1+\gamma_{1}}(t) \right],\\
	I_{i}(0)&=v_{i}(0,\psi), \ i=1,2. 
	\end{aligned}
	\right.
	\end{equation}
	We define $E(t):=I_{1}(t)+I_{2}(t),$ so that $E(\cdot)$ satisfies 
	\begin{align} 
	\frac{dE(t)}{dt}=&-\left(\lambda+l_{1}+l_{2} \right) E(t)+\exp\{{\rho_{11} W_{1}(t)}+{\rho_{12} W_{2}(t)}\} \left[ I_{1}^{1+\beta_{1}}(t)+I_{2}^{1+\gamma_{2}}(t) \right] \nonumber\\
	&\qquad+ \exp\{{\rho_{21} W_{1}(t)}+{\rho_{22} W_{2}(t)}\} \left[ I_{1}^{1+\beta_{2}}(t)+I_{2}^{1+\gamma_{1}}(t) \right]. \nonumber
	\end{align}   
\noindent\textbf{Proof of (1).} If $\beta_{1}=\gamma_{2}=\gamma_{1}=\beta_{2}=m$ (say) and $m>0,$ then $\rho_{11}=\rho_{12}=\rho_{21}=\rho_{22}:=\rho.$ By using the same idea used as in Case 1 of Theorem $\ref{thm6.1},$ we have 
%	\begin{align}
%	\frac{dE(t)}{dt}\geq- \left(\lambda+l_{1}+l_{2} \right)E(t)+2^{-m}\exp\{{\rho W_{1}(t)+\rho W_{2}(t)}\} E^{1+m}(t). \nonumber
%	\end{align}
%	Thus, $E(t)$ blows up not later than the solution $I(t)$ of the equation
%	\begin{align}
%	\frac{dI(t)}{dt}=-\left(\lambda+l_{1}+l_{2} \right)I(t)+2^{-m}\exp\{{\rho W_{1}(t)+\rho W_{2}(t)}\} I^{1+m}(t),\ I(0)=E(0). \nonumber
%	\end{align}
%	and 
%	\begin{align}
%	I(t)= e^{-\left(\lambda+l_{1}+l_{2} \right) t}\left\lbrace E^{-m}(0)-m2^{-m} \displaystyle\int_{0}^{t} \exp\{\rho W_{1}(t)+\rho W_{2}(t)-\left(\lambda+l_{1}+l_{2} \right) m s\}   ds \right\rbrace^{\frac{1}{-m}}.\nonumber 
%	\end{align}
%	For the above inequality, 
$\tau \leq \tau^{\ast}_{1},$ where $\tau_{1}^{\ast}$ is given by
		\begin{eqnarray}
		\tau^{\ast}_{1}=\inf \left\lbrace t\geq 0 : \int_{0}^{t} \exp\{\rho W_{1}(t)+\rho W_{2}(t)-\eta m s\} ds \geq 2^{m}{m}^{-1}E^{-m}(0) \right\rbrace. \nonumber  
		\end{eqnarray} 
%		From the definition of $\tau^{\ast \ast}_{1},$  we obtain 
%		\begin{align}
%		\mathbb{P}\{ \tau^{\ast \ast}_{1}=\infty\} &= \mathbb{P}\Bigg( \int_{0}^{\infty} \exp\{\rho W_{1}(t)+\rho W_{2}(t)-\left(\lambda+l_{1}+l_{2} \right) m s\}ds\leq 2^{m} {m}^{-1}E^{-m}(0) \Bigg)  \nonumber
%		\end{align}
Since $\{(W_1(t),W_{\textcolor{blue}{2}}(t))\}_{t\geq 0}$ is a two-dimensional Brownian motion, we know that the process $\{\rho W_1(t)+\rho W_2(t)\}_{t\geq 0}$ is a Gaussian process with covariance  $2\rho^2\min\{s,t\}$. 		Next, by using the method adopted in $\cite{niu2012}$,  the scaling property of the Brownian motion $\{B(t)\}_{t\geq 0},$   and the proceeding result, we have
	\begin{align}
	\mathbb{P}\{ \tau^{\ast}_{1}=\infty\} &= \mathbb{P}\Bigg( \int_{0}^{\infty} \exp\{\rho W_{1}(t)+\rho W_{2}(t)-\eta m s\}ds\leq 2^{m} {m}^{-1}E^{-m}(0) \Bigg) \nonumber\\
	&=\mathbb{P}\left(\int_{0}^{\infty} \exp\{-\eta m s+\sqrt{2}\rho B(s)\} ds \leq 2^{m} {m}^{-1}E^{-m}(0) \right) \nonumber\\
	&=\mathbb{P}\left(\int_{0}^{\infty} \exp\{-\eta m s +2B(s\rho^{2}/2)\} ds \leq 2^{m} {m}^{-1}E^{-m}(0) \right). \nonumber
	\end{align}  
Let us set $t=\frac{s\rho^{2}}{2}$. The above equality means that there exists a one-dimensional standard Brownian motion $\{B(t)\}_{t\geq 0}$ such that
%	\begin{eqnarray}
%	\mathbb{P}(\tau^{\ast}_{1}=\infty)=\mathbb{P}\left( \frac{2}{\rho^{2}} \int_{0}^{\infty} e^{2[B(t)-\frac{(\lambda+l_{1}+l_{2})m t}{\rho^{2}}]} dt\leq 2^{m} {m}^{-1}E^{-m}(0) \right),\nonumber    
%	\end{eqnarray}
%	that is,
	\begin{align*}
	\mathbb{P}(\tau<\infty)\geq \mathbb{P}(\tau^{\ast}_{1}< \infty) =1-\mathbb{P}(\tau^{\ast}_{1}=\infty)=1-\mathbb{P}\left(\int_{0}^{\infty} \exp\{2[B(t)-\alpha_{1}t]\} dt\leq \frac{2^{m}\rho^{2}}{2m E^{m}(0)}\right).
	\end{align*}
By using Lemma $\ref{Rll1},$ we have
	\begin{align*}
	\mathbb{P}(\tau<\infty)\geq \mathbb{P}\left\{m E^{m}(0)> 2^{m}\rho^{2} X{(\alpha_{1},1)} \right\},\nonumber 
	\end{align*}
	where $\alpha_{1}=\displaystyle\frac{\eta m }{\rho^{2}}.$
	
	\noindent\textbf{{Proof of (2).}} Suppose $\beta_{1}=\gamma_{1}=\beta, \ \gamma_{2}=\beta_{2}=\gamma$ with $\beta>\gamma>0$. By using the same method as in Case 2 of Theorem $\ref{thm6.1},$ we get $\tau \leq \tau^{\ast}_{2}$ where $\tau_{2}^{\ast}$	
 is given by 
\begin{align}
\hspace{-0.2 in}\tau^{\ast}_{2} = \inf \left\lbrace  t\geq 0 : \int_{0}^{t}\left(e^{\rho_{11} W_{1}(s)+\rho_{12} W_{2}(s)} \wedge e^{\rho_{21} W_{1}(s)+\rho_{22} W_{2}(s)} \right) e^{-\eta \gamma s}ds \geq \frac{2^{1+\gamma}\gamma^{-1}E(0)}{\epsilon_{0} E^{1+\gamma}(0)-2^{1+\gamma}D_{1}\epsilon_{0}^{\frac{1+\beta}{\beta-\gamma}}} \right\rbrace.  \nonumber 
\end{align}
Note that if  $\rho_{21}\geq\rho_{11}>0$ and $\rho_{22}\geq \rho_{12}>0,$ we have
\begin{align}
&\int_{0}^{t} \left( \exp\{{\rho_{11} W_{1}(t)}+{\rho_{12} W_{2}(t)}-\eta \gamma s\} \wedge \exp\{{\rho_{21} W_{1}(t)}+{\rho_{22} W_{2}(t)}-\eta \gamma s\} \right) ds \nonumber\\
%&= \int_{0}^{t}\exp\{\rho_{21} W_{1}(s)+\rho_{22} W_{2}(s)-\eta \gamma s\} \mathbf{1}_{\{W_{1}<0,W_{2}<0\}} ds \nonumber\\
%&\quad+\int_{0}^{t}\exp\{{\rho_{11} W_{1}(s)+\rho_{12} W_{2}(s)-\eta \gamma s}\}\wedge \exp\{{\rho_{21} W_{1}(s)+\rho_{22} W_{2}(s)-\eta \gamma s}\}\mathbf{1}_{\{W_{1}\geq 0,W_{2} < 0\}} ds \nonumber\\
%&\quad+\int_{0}^{t}\exp\{{\rho_{11} W_{1}(s)+\rho_{12} W_{2}(s)-\eta \gamma s}\}\wedge \exp\{{\rho_{21} W_{1}(s)+\rho_{22} W_{2}(s)-\eta \gamma s}\}\mathbf{1}_{\{W_{1}< 0,W_{2} \geq 0\}} ds \nonumber\\
%&\quad+\int_{0}^{t}\exp\{{\rho_{11} W_{1}(s)+\rho_{12} W_{2}(s)-\eta \gamma s}\} \mathbf{1}_{\{W_{1} \geq 0,W_{2} \geq 0\}} ds \nonumber\\
%&\geq \int_{0}^{t}\exp\{{\rho_{11} W_{1}(s)+\rho_{12} W_{2}(s)-\eta \gamma s}\} \mathbf{1}_{\{W_{1} \geq 0,W_{2} \geq 0\}} ds, \nonumber\\ 
&\textcolor{blue}{\geq \int_{0}^{t}\exp\{{\rho_{11} W_{1}(s)+\rho_{12} W_{2}(s)-\eta \gamma s}\} \mathbf{1}_{\{{\rho_{11} W_{1}(t)}+{\rho_{12} W_{2}(t)}-\eta \gamma s \geq 0\}} ds} \nonumber\\
&\quad \textcolor{blue}{+\int_{0}^{t}\exp\{{\rho_{11} W_{1}(s)+\rho_{12} W_{2}(s)-\eta \gamma s}\} \mathbf{1}_{\{{\rho_{11} W_{1}(t)}+{\rho_{12} W_{2}(t)}-\eta \gamma s < 0\}} ds} \nonumber\\ 
&\textcolor{blue}{\geq \int_{0}^{t}\exp\{{\rho_{11} W_{1}(s)+\rho_{12} W_{2}(s)-\eta \gamma s}\} \mathbf{1}_{\{{\rho_{11} W_{1}(t)}+{\rho_{12} W_{2}(t)}-\eta \gamma s \geq 0\}} ds}, \nonumber 
\end{align}
for all $t \geq 0$. We conclude that 
\begin{align}\label{car1}
&\int_{0}^{t} \left( e^{{\rho_{11} W_{1}(t)}+{\rho_{12} W_{2}(t)}-\eta \gamma s} \wedge e^{{\rho_{21} W_{1}(t)}+{\rho_{22} W_{2}(t)}-\eta \gamma s} \right) ds \nonumber\\
&\hspace{0.8 in}\geq \int_{0}^{t}e^{-(\rho_{11} W_{1}(s)+\rho_{12} W_{2}(s)-\eta \gamma s)} \mathbf{1}_{\{\rho_{11}W_{1}(s)+\rho_{12}W_{2}(s)-\eta \gamma s \geq 0\}} ds. 
\end{align}
It follows that 
\begin{align}
\tau^{\ast \ast}_{2}&= \inf \Bigg\{ t\geq 0 : \int_{0}^{t} e^{-(\rho_{11} W_{1}(s)+\rho_{12} W_{2}(s)-\eta \gamma s)} \mathbf{1}_{\{\rho_{11}W_{1}(s)+\rho_{12}W_{2}(s)-\eta \gamma s \geq 0\}} ds \geq a_{1} \Bigg\}, \nonumber 
\end{align}
and $\textcolor{blue}{\tau_{2} \leq} \tau_{2}^{\ast} \leq \tau_{2}^{\ast \ast}.$ Note that $\tau \leq \tau^{\ast \ast}_{2}$, and by definition of $\tau^{\ast\ast}_{2},$ we have
\begin{align}
\mathbb{P} (\tau< \infty) \geq \mathbb{P} (\tau_{2}^{\ast\ast}< \infty) &= \mathbb{P} \Bigg\{ \int_{0}^{\infty} e^{-(\rho_{11} W_{1}(s)+\rho_{12} W_{2}(s)-\eta \gamma s)} \mathbf{1}_{\{\rho_{11}W_{1}(s)+\rho_{12}W_{2}(s)-\eta \gamma s \geq 0\}} ds \geq a_{1} \Bigg\} \nonumber\\
&=\mathbb{P} \Bigg\{ \int_{0}^{\infty} e^{-(-\eta \gamma s+\sqrt{\rho_{11}^{2}+\rho_{12}^{2}}B(s))} \mathbf{1}_{\Big\{\sqrt{\rho_{11}^{2}+\rho_{12}^{2}}B(s)-\eta \gamma s \geq 0\Big\}} ds \geq a_{1} \Bigg\} \nonumber\\
&= \int^{\infty}_{a_{1}} \eta \gamma \sum_{n \geq 1} \exp \Bigg\{ -\left( \frac{\rho_{11}^{2}+\rho_{12}^{2}}{8}j^{2}_{\frac{2 \eta \gamma}{\rho_{11}^{2}+\rho_{12}^{2}}-1, n}\right)y \Bigg\}dy \nonumber\\
&=\frac{8 \eta \gamma}{\rho_{11}^{2}+\rho_{12}^{2}} \sum_{n \geq 1} \frac{\exp \Bigg\{  - \frac{(\rho_{11}^{2}+\rho_{12}^{2})a_{1}}{8}j^{2}_{\frac{2 \eta \gamma}{\rho_{11}^{2}+\rho_{12}^{2}}-1, n} \Bigg\}}{j^{2}_{\frac{2 \eta \gamma}{\rho_{11}^{2}+\rho_{12}^{2}}-1, n}}.\nonumber 
\end{align}

%	Proceeding in the same way as in the proof of (1) in Theorem \ref{thm7.3}, we have
%	\begin{align*}
%	\mathbb{P}(\tau<\infty) \geq \mathbb{P}(\tau^{\ast}_{2}<\infty)=\mathbb{P}\left\{2\gamma \left(\epsilon_{0} E^{\gamma}(0)-E^{-1}(0)D_{1}\epsilon_{0}^{\frac{1+\beta}{\beta-\gamma}}\right)> (\hat{\rho_1}^{2}+\hat{\rho_2}^{2}) X{(\alpha_{2},1)} \right\},\nonumber 
%	\end{align*}
%	where $\alpha_{2}=\displaystyle\frac{2\eta \gamma}{\hat{\rho_1}^{2}+\hat{\rho_2}^{2}}.$

		\noindent \textbf{{Proof of (3).}} Suppose $\beta_{1}>\gamma_{2}>\gamma_{1}>\beta_{2}>0$. By using the same procedure as used in Theorem \ref{thm6.1} (Case 3), we obtain $\tau \leq \tau^{\ast}_{3}$ where $\tau_{3}^{\ast}$
is given by
	\begin{align}
	\tau^{\ast}_{3} = \inf \Bigg\{ t\geq 0 : \int_{0}^{t} &\left( e^{{\rho_{11} W_{1}(t)}+{\rho_{12} W_{2}(t)}} \wedge e^{{\rho_{21} W_{1}(t)}+{\rho_{22} W_{2}(t)}} \right) e^{-\eta \beta_{2}s} ds \geq a_{2}\Bigg\}.  \nonumber
	\end{align}
%Note that if  $\rho_{21}\geq\rho_{11}>0$ and $\rho_{22}\geq \rho_{12}>0,$ we have
%	 	\begin{align}
%	 	&\int_{0}^{t} \left( \exp\{{\rho_{11} W_{1}(t)}+{\rho_{12} W_{2}(t)}-\eta \beta_{2} s\} \wedge \exp\{{\rho_{21} W_{1}(t)}+{\rho_{22} W_{2}(t)}-\eta \beta_{2} s\} \right) ds \nonumber\\
%	 	&= \int_{0}^{t}\exp\{\rho_{21} W_{1}(s)+\rho_{22} W_{2}(s)-\eta \beta_{2} s\} \mathbf{1}_{\{W_{1}<0,W_{2}<0\}} ds \nonumber\\
%	 	&\quad+\int_{0}^{t}\exp\{{\rho_{11} W_{1}(s)+\rho_{12} W_{2}(s)-\eta \beta_{2} s}\}\wedge \exp\{{\rho_{21} W_{1}(s)+\rho_{22} W_{2}(s)-\eta  s}\}\mathbf{1}_{\{W_{1}\geq 0,W_{2} < 0\}} ds \nonumber\\
%	 	&\quad+\int_{0}^{t}\exp\{{\rho_{11} W_{1}(s)+\rho_{12} W_{2}(s)-\eta \beta_{2}  s}\}\wedge \exp\{{\rho_{21} W_{1}(s)+\rho_{22} W_{2}(s)-\eta \beta_{2}  s}\}\mathbf{1}_{\{W_{1}< 0,W_{2} \geq 0\}} ds \nonumber\\
%	 	&\quad+\int_{0}^{t}\exp\{{\rho_{11} W_{1}(s)+\rho_{12} W_{2}(s)-\eta \beta_{2} s}\} \mathbf{1}_{\{W_{1} \geq 0,W_{2} \geq 0\}} ds \nonumber\\
%	 	&\geq \int_{0}^{t}\exp\{{\rho_{11} W_{1}(s)+\rho_{12} W_{2}(s)-\eta \beta_{2} s}\} \mathbf{1}_{\{W_{1} \geq 0,W_{2} \geq 0\}} ds, \nonumber
%	 	\end{align}
%	 	for all $t \geq 0$. We conclude that 
%	 	\begin{align}
%	 	&\int_{0}^{t} \left( \exp\{{\rho_{11} W_{1}(t)}+{\rho_{12} W_{2}(t)}-\eta \beta_{2} s\} \wedge \exp\{{\rho_{21} W_{1}(t)}+{\rho_{22} W_{2}(t)}-\eta \beta_{2} s\} \right) ds \nonumber\\
%	 	&\hspace{0.8 in}\geq \int_{0}^{t}\exp\{{-(\rho_{11} W_{1}(s)+\rho_{12} W_{2}(s)-\eta \beta_{2} s)}\} \mathbf{1}_{\{\rho_{11}W_{1}(s)+\rho_{12}W_{2}(s)-\eta \gamma s 0\}} ds. \nonumber 
%	 	\end{align}
		Therefore \eqref{car1}, it follows that 
	\begin{align}
	\tau^{\ast \ast}_{3}&= \inf \Bigg\{ t\geq 0 : \int_{0}^{t} e^{-(\rho_{11} W_{1}(s)+\rho_{12} W_{2}(s)-\eta \beta_{2} s)} \mathbf{1}_{\{\rho_{11}W_{1}(s)+\rho_{12}W_{2}(s)-\eta \gamma s \geq 0\}} ds \geq a_{2} \Bigg\}, \nonumber 
	\end{align}
	and $\textcolor{blue}{\tau_{3} \leq} \tau_{3}^{\ast} \leq \tau_{3}^{\ast \ast}.$ Note that $\tau \leq \tau^{\ast \ast}_{3}$, and by definition of $\tau^{\ast\ast}_{3},$ we have 
	\begin{align}
%	&\mathbb{P} (\tau_{3}^{\ast\ast}< \infty) = \mathbb{P} \Bigg\{ \int_{0}^{\infty} \exp\{{-(\rho_{11} W_{1}(s)+\rho_{12} W_{2}(s)-\eta \beta_{2} s)}\} \mathbf{1}_{\{\rho_{11}W_{1}(s)+\rho_{12}W_{2}(s)-\eta \gamma s \geq 0\}} ds \geq a_{1} \Bigg\}, \nonumber\\
%	Since $\{(W_1(t),W(t))\}_{t\geq 0}$ is a two-dimensional Brownian motion, we know that the process $\{\rho_{11} W_1(t)+\rho_{12} W_2(t)\}_{t\geq 0}$ is a Gaussian process with covariance  $(\rho_{11}^{2}+\rho_{12}^{2})\min\{s,t\}$. Next, by using the method adopted in $\cite{niu2012}$,  the scaling property of the Brownian motion $\{B(t)\}_{t\geq 0},$   and the proceeding result, we have
	\mathbb{P} (\tau< \infty) \geq \mathbb{P} (\tau_{3}^{\ast\ast}< \infty) &= \mathbb{P} \Bigg\{ \int_{0}^{\infty} e^{-(\rho_{11} W_{1}(s)+\rho_{12} W_{2}(s)-\eta \beta_{2} s)} \mathbf{1}_{\{\rho_{11}W_{1}(s)+\rho_{12}W_{2}(s)-\eta \gamma s \geq  0\}} ds \geq a_{2} \Bigg\} \nonumber\\
	 &=\mathbb{P} \Bigg\{ \int_{0}^{\infty} e^{-(-\eta \beta_{2} s+\sqrt{\rho_{11}^{2}+\rho_{12}^{2}}B(s))} \mathbf{1}_{\Big\{\sqrt{\rho_{11}^{2}+\rho_{12}^{2}}B(s)-\eta \beta_{2} s \geq 0\Big\}} ds \geq a_{2} \Bigg\} \nonumber\\
	 &= \int^{\infty}_{a_{2}} \eta \beta_{2} \sum_{n \geq 1} \exp \Bigg\{ -\left( \frac{\rho_{11}^{2}+\rho_{12}^{2}}{8}j^{2}_{\frac{2 \eta \beta_{2} }{\rho_{11}^{2}+\rho_{12}^{2}}-1, n}\right)y \Bigg\}dy \nonumber\\
	 &=\frac{8 \eta \beta_{2}}{\rho_{11}^{2}+\rho_{12}^{2}} \sum_{n \geq 1} \frac{\exp \Bigg\{  - \frac{(\rho_{11}^{2}+\rho_{12}^{2})a_{2}}{8}j^{2}_{\frac{2 \eta \beta_{2} }{\rho_{11}^{2}+\rho_{12}^{2}}-1, n} \Bigg\}}{j^{2}_{\frac{2 \eta \beta_{2}}{\rho_{11}^{2}+\rho_{12}^{2}}-1, n}},\nonumber 
	\end{align}
	 which completes the proof. 
	\end{proof}
	
	The following theorem gives an upper bound for the probability of blow-up solution of the system \eqref{b1}-\eqref{b2}.  Let us take \eqref{Ra4} with
	\begin{align}\label{new2}
	\Lambda_{1}= \lambda +\frac{k^{2}_{11} \wedge k^{2}_{12} \wedge k^{2}_{21} \wedge k^{2}_{22}}{2}\ \mbox{and}\ b_{1} =\min \Big\{ \beta_{1}, \beta_{2}, \gamma_{1}, \gamma_{2} \Big\}. 
	\end{align}
	\begin{theorem}
		Assume that $\beta_{1},\beta_{2},\gamma_{1},\gamma_{2}>0$ and $V_{1}=V_{2}=0$. Let $f_{1}=M_{1}\psi$ and $f_{2}=M_{2}\psi,$ for some positive constants $M_{1}$ and $M_{2}$ with $M_{1}\leq M_{2}.$ Then an upper bound for the probability of blow-up solution of the system \eqref{b1}-\eqref{b2} is given by
		\begin{align}
		\mathbb{P}(\tau<\infty) &\leq\mathbb{P} \Big\{ \frac{6}{X(\alpha_{3},1)}\geq \widetilde{N}_4(\rho_{11}+\rho_{21})^{2}\Big\}+\mathbb{P} \Big\{ \frac{6}{X(\alpha_{4},1)}\geq \widetilde{N}_4(\rho_{12}+\rho_{22})^{2}\Big\}\nonumber\\
		&\quad+\mathbb{P} \Big\{ \frac{6}{X(\alpha_{5},1)}\geq \widetilde{N}_4(\rho_{11}+\rho_{12})^{2}\Big\},\nonumber 
		\end{align}
		where
		\begin{align}
%		\sigma_{\ast \ast \ast}&= \inf \Bigg\{ t\geq 0 : \int_{0}^{t} \Big\{ \exp\{{(\rho_{11}+\rho_{21})W_{1}(s)-\Lambda_{1}b_{1}s}\}+\exp\{{(\rho_{12}+\rho_{22}) W_{2}(s)-\Lambda_{1}b_{1}s}\} \nonumber\\ &\hspace{1.0 in}+\exp\{{\rho_{11} W_{1}(s)+\rho_{12} W_{2}(s)-\Lambda_{1}b_{1}s}\} \Big\} ds \geq \widetilde{N}_4 \Bigg\}, \nonumber\\
		\widetilde{N}_4&= \min\Bigg\{ \frac{1}{\eta_{1}(\beta_{1}+\gamma_{2}+1)(M^{\beta_{1}}_{1}\left\|\psi \right\|_{\infty}^{\beta_{1}}+M^{\gamma_{2}}_{1}\left\|\psi \right\|_{\infty}^{\gamma_{2}})}, \nonumber\\
		&\qquad\qquad\frac{1}{\eta_{2}(\gamma_{1}+\beta_{2}+1)(M^{\gamma_{1}}_{2}\left\|\psi \right\|_{\infty}^{\gamma_{1}}+M^{\beta_{2}}_{2}\left\|\psi \right\|_{\infty}^{\beta_{2}})}\Bigg\}-\frac{1}{\Lambda_{1}b_{1}}, \nonumber \\
		\alpha_{3}&=\displaystyle \frac{4\Lambda_{1}b_{1}}{(\rho_{11}+\rho_{21})^{2}},\ \alpha_{4}=\displaystyle \frac{4\Lambda_{1}b_{1}}{(\rho_{12}+\rho_{22})^{2}},\ \alpha_{3}=\displaystyle \frac{4\Lambda_{1}b_{1}}{(\rho_{11}+\rho_{12})^{2}}\ \mbox{and}\ \Lambda_{1}, b_{1}\ \mbox{are given in}\ \eqref{new2}. \nonumber 
		\end{align}
		\end{theorem}

\begin{proof}
Proceeding similar fashion as in Theorem \ref{thm5.1}, we have
	\begin{align}
	\sigma_{\ast \ast} &= \inf \Bigg\{ t\geq 0 : \int_{0}^{t}\Bigg( \exp\{{\rho_{11}W_{1}(r)+\rho_{12}W_{2}(r)}\} \vee \exp\{{\rho_{21}W_{1}(r)+\rho_{22}W_{2}(r)}\} \Bigg)e^{-\Lambda_{1}b_{1}s}dr\geq
	\nonumber\\&\min\Bigg\{ \frac{1}{\eta_{1}(\beta_{1}+\gamma_{2}+1)(M^{\beta_{1}}_{1}\left\|\psi \right\|_{\infty}^{\beta_{1}}+M^{\gamma_{2}}_{1}\left\|\psi \right\|_{\infty}^{\gamma_{2}})}, \frac{1}{\eta_{2}(\gamma_{1}+\beta_{2}+1)(M^{\gamma_{1}}_{2}\left\|\psi \right\|_{\infty}^{\gamma_{1}}+M^{\beta_{2}}_{2}\left\|\psi \right\|_{\infty}^{\beta_{2}})}\Bigg\} \Bigg\}. \nonumber
	\end{align}
It is clear that $\sigma_{\ast \ast} \leq \sigma_{\ast}\leq \tau.$ Note that if  $\rho_{11}\geq\rho_{21}>0$ and $\rho_{12}\geq \rho_{22}>0,$ we have
\begin{align}
&\int_{0}^{t} \left( \exp\{{\rho_{11} W_{1}(t)}+{\rho_{12} W_{2}(t)}-\Lambda_{1}b_{1} s\} \vee \exp\{{\rho_{21} W_{1}(t)}+{\rho_{22} W_{2}(t)}-\Lambda_{1}b_{1} s\} \right) ds \nonumber\\
&= \int_{0}^{t}\exp\{\rho_{21} W_{1}(s)+\rho_{22} W_{2}(s)-\Lambda_{1}b_{1} s\} \mathbf{1}_{\{W_{1}<0,W_{2}<0\}} ds \nonumber\\
&\quad+\int_{0}^{t}\exp\{{\rho_{11} W_{1}(s)+\rho_{12} W_{2}(s)-\Lambda_{1}b_{1} s}\}\vee \exp\{{\rho_{21} W_{1}(s)+\rho_{22} W_{2}(s)-\Lambda_{1}b_{1} s}\}\mathbf{1}_{\{W_{1}\geq 0,W_{2} < 0\}} ds \nonumber\\
&\quad+\int_{0}^{t}\exp\{{\rho_{11} W_{1}(s)+\rho_{12} W_{2}(s)-\Lambda_{1}b_{1} s}\}\vee \exp\{{\rho_{21} W_{1}(s)+\rho_{22} W_{2}(s)-\Lambda_{1}b_{1} s}\}\mathbf{1}_{\{W_{1}< 0,W_{2} \geq 0\}} ds \nonumber\\
&\quad+\int_{0}^{t}\exp\{{\rho_{11} W_{1}(s)+\rho_{12} W_{2}(s)-\Lambda_{1}b_{1} s}\} \mathbf{1}_{\{W_{1} \geq 0,W_{2} \geq 0\}} ds \nonumber\\
&\leq \int_{0}^{\infty}\exp\{{-\Lambda_{1}b_{1} s}\} ds +\int_{0}^{t}\exp\{{(\rho_{11}+\rho_{21})W_{1}(s)-\Lambda_{1}b_{1} s}\} ds \nonumber\\
&\quad+\int_{0}^{t}\exp\{{(\rho_{12}+\rho_{22}) W_{2}(s)-\Lambda_{1}b_{1} s}\} ds+\int_{0}^{t}\exp\{{\rho_{11} W_{1}(s)+\rho_{12} W_{2}(s)-\Lambda_{1}b_{1} s}\} ds \nonumber\\
&=\frac{1}{\Lambda_{1}b_{1}}+\int_{0}^{t}\exp\{{(\rho_{11}+\rho_{21})W_{1}(s)-\Lambda_{1}b_{1} s}\} ds+\int_{0}^{t}\exp\{{(\rho_{12}+\rho_{22}) W_{2}(s)-\Lambda_{1}b_{1} s}\} ds\nonumber\\&\quad+\int_{0}^{t}\exp\{{\rho_{11} W_{1}(s)+\rho_{12} W_{2}(s)-\Lambda_{1}b_{1} s}\} ds. \nonumber
\end{align}
It follows that 
\begin{align}
\sigma_{\ast \ast \ast}&= \inf \Bigg\{ t\geq 0 : \int_{0}^{t} \Big\{ \exp\{{(\rho_{11}+\rho_{21})W_{1}(s)-\Lambda_{1}b_{1}s}\}+\exp\{{(\rho_{12}+\rho_{22}) W_{2}(s)-\Lambda_{1}b_{1}s}\} \nonumber\\ &\hspace{1.0 in}+\exp\{{\rho_{11} W_{1}(s)+\rho_{12} W_{2}(s)-\Lambda_{1}b_{1}s}\} \Big\} ds \geq \widetilde{N}_4 \Bigg\}, \nonumber 
\end{align}
and $\sigma_{\ast \ast \ast} \leq\sigma_{\ast \ast} \leq \tau.$ Then by definition of $\sigma_{\ast \ast \ast},$ we have
\begin{align}
\mathbb{P} (\sigma_{\ast \ast \ast}< \infty) &\leq \mathbb{P} \Bigg\{ \int_{0}^{\infty} \Big\{ \exp\{{(\rho_{11}+\rho_{21})W_{1}(s)-\Lambda_{1}b_{1} s}\} +\exp\{{(\rho_{12}+\rho_{22}) W_{2}(s)-\Lambda_{1}b_{1} s}\} \nonumber\\ &\hspace{1.0 in}+\exp\{{\rho_{11} W_{1}(s)+\rho_{12} W_{2}(s)-\Lambda_{1}b_{1} s}\} \geq \widetilde{N}_4 \Big\} ds \Bigg\} \nonumber\\
& \leq \mathbb{P} \Bigg\{ \int_{0}^{\infty} \exp\{{(\rho_{11}+\rho_{21})W_{1}(s)-\Lambda_{1}b_{1} s}\}ds \geq \frac{\widetilde{N}_4}{3}, \nonumber\\&\qquad \int_{0}^{\infty} \exp\{{(\rho_{12}+\rho_{22}) W_{2}(s)-\Lambda_{1}b_{1} s}\} ds \geq \frac{\widetilde{N}_4}{3}, \nonumber\\
&\qquad \int_{0}^{\infty} \exp\{{\rho_{11} W_{1}(s)+\rho_{12} W_{2}(s)-\Lambda_{1}b_{1} s}\} ds \geq \frac{\widetilde{N}_4}{3} \Bigg\} \nonumber \\
& \leq \mathbb{P} \Bigg\{ \int_{0}^{\infty} \exp\{{(\rho_{11}+\rho_{21})W_{1}(s)-\Lambda_{1}b_{1} s}\}ds \geq \frac{\widetilde{N}_4}{3}\Bigg\}\nonumber\\&\qquad+\mathbb{P} \Bigg\{\int_{0}^{\infty} \exp\{{(\rho_{12}+\rho_{22}) W_{2}(s)-\Lambda_{1}b_{1} s}\} ds\geq \frac{\widetilde{N}_4}{3}\Bigg\} \nonumber\\
&\qquad+\mathbb{P} \Bigg\{\int_{0}^{\infty} \exp\{{\rho_{11} W_{1}(s)+\rho_{12} W_{2}(s)-\Lambda_{1}b_{1} s}\}ds \geq \frac{\widetilde{N}_4}{3} \Bigg\}. \nonumber  	
\end{align}
Therefore, we have 
\begin{align}
&\mathbb{P} \Bigg\{ \int_{0}^{\infty} \exp\{{(\rho_{11}+\rho_{21})W_{1}(s)-\Lambda_{1}b_{1} s}\}ds\geq \frac{\widetilde{N}_4}{3}\Bigg\}\nonumber\\&=	\mathbb{P} \Bigg\{ \int_{0}^{\infty} \exp\{{2W_{1}\left( \frac{(\rho_{11}+\rho_{21})^{2}s}{4}\right) -\Lambda_{1}b_{1} s}\}ds\geq \frac{\widetilde{N}_4}{3}\Bigg\},\nonumber
\end{align}
by using the scaling property of $\{W_1(t)\}_{t\geq 0}.$ A transformation $t=\frac{(\rho_{11}+\rho_{21})^{2}s}{4}$ yields that 
\begin{align}
&\mathbb{P} \Bigg\{ \int_{0}^{\infty} \exp\{{(\rho_{11}+\rho_{21})W_{1}(s)-\Lambda_{1}b_{1} s}\}ds\geq \frac{\widetilde{N}_4}{3}\Bigg\}\nonumber\\&=	\mathbb{P} \Bigg\{ \frac{4}{(\rho_{11}+\rho_{21})^{2}}\int_{0}^{\infty} e^{2W_{1}(t)-\frac{4\Lambda_{1}b_{1} t}{(\rho_{11}+\rho_{21})^{2}} }dt\geq \frac{\widetilde{N}_4}{3}\Bigg\} \nonumber\\
&=	\mathbb{P} \Bigg\{\frac{6}{X(\alpha_{3},1)}\geq \widetilde{N}_4(\rho_{11}+\rho_{21})^{2}\Bigg\}. \nonumber
\end{align}
Similarly, we have 
	\begin{align*}
	\mathbb{P} \Bigg\{ \int_{0}^{\infty} \exp\{{(\rho_{12}+\rho_{22})W_{2}(s)-\Lambda_{1}b_{1} s}\}ds<\frac{x}{3}\Bigg\}&=\mathbb{P} \Bigg\{\frac{6}{X(\alpha_{4},1)}\geq\widetilde{N}_4(\rho_{12}+\rho_{22})^{2}\Bigg\}
\end{align*}and 
\begin{align*}
	\mathbb{P} \Bigg\{\int_{0}^{\infty} \exp\{{\rho_{11} W_{1}(s)+\rho_{12} W_{2}(s)-\Lambda_{1}b_{1} s}\}ds <\frac{x}{3} \Bigg\}&= \mathbb{P} \Bigg\{\frac{6}{X(\alpha_{5},1)}\geq\widetilde{N}_4(\rho_{11}+\rho_{12})^{2}\Bigg\}. \nonumber 
	\end{align*}
	Therefore, we obtain
	\begin{align}
	\mathbb{P}(\tau<\infty) &\leq \mathbb{P} (\sigma_{\ast\ast \ast}< \infty)\nonumber\\&\leq\mathbb{P} \Big\{ \frac{6}{X(\alpha_{3},1)}\geq \widetilde{N}_4(\rho_{11}+\rho_{21})^{2}\Big\}+\mathbb{P} \Big\{ \frac{6}{X(\alpha_{4},1)}\geq \widetilde{N}_4(\rho_{12}+\rho_{22})^{2}\Big\}\nonumber\\
	&\qquad+\mathbb{P} \Big\{ \frac{6}{X(\alpha_{5},1)}\geq \widetilde{N}_4(\rho_{11}+\rho_{12})^{2}\Big\},\nonumber 
	\end{align}
%where
%\begin{align}
%\widetilde{N}_4= \min\Bigg\{& \frac{1}{\eta_{1}(\beta_{1}+\gamma_{2}+1)(M^{\beta_{1}}_{1}\left\|\psi \right\|_{\infty}^{\beta_{1}}+M^{\gamma_{2}}_{1}\left\|\psi \right\|_{\infty}^{\gamma_{2}})}, \nonumber\\
%&\frac{1}{\eta_{2}(\gamma_{1}+\beta_{2}+1)(M^{\gamma_{1}}_{2}\left\|\psi \right\|_{\infty}^{\gamma_{1}}+M^{\beta_{2}}_{2}\left\|\psi \right\|_{\infty}^{\beta_{2}})}\Bigg\}-\frac{1}{\Lambda_{1}b_{1}} \nonumber \\
%\alpha_{3}=&\displaystyle \frac{4\Lambda_{1}b_{1}}{(\rho_{11}+\rho_{21})^{2}},\ \alpha_{4}=\displaystyle \frac{4\Lambda_{1}b_{1}}{(\rho_{12}+\rho_{22})^{2}},\ \alpha_{3}=\displaystyle \frac{4\Lambda_{1}b_{1}}{(\rho_{11}+\rho_{12})^{2}}
%\end{align}
 which completes the proof.
\end{proof}

	\begin{Rem}
		\begin{enumerate}
			\item  Taking $k_1=k_2=0$ in \eqref{b1}-\eqref{b2}, we have $v_i=u_i,\ i=1,2$ (see \eqref{1}) and, moreover, if $V_1=V_{2}=0$ and $\beta_1=\beta_2=\gamma_1=\gamma_2=\beta$, we obtain from \eqref{tau1} that $$\tau_1^{\ast}=\inf\Bigg\{t\geq 0:\int_{0}^{t}e^{-\mu \beta s} ds \geq 2^{\beta}{\beta}^{-1}E^{-\beta}(0)\Bigg\}.$$  Therefore $\mathbb{P}(\tau_1^{*}=+\infty)=0$  provided 
			\begin{align}
				\int_{0}^{\infty} \exp\{ -\mu \beta s \} ds &\geq 2^{\beta} \beta^{-1}E^{-\beta}(0),\ \text{ that is, }\ 
				\frac{1}{\mu \beta} \geq 2^{\beta} \beta^{-1}E^{-\beta}(0),\nonumber
			\end{align}
			where $\mu=\lambda$ by \eqref{s1} and we have 
			\begin{align}
				\int_{D} [f_{1}(x)+f_{2}(x)] \psi(x)dx &\geq 2 \lambda^{\frac{1}{\beta}}. \nonumber
			\end{align}
			Thus, the problem \eqref{b1}-\eqref{b2} reduces to the Fujita problem.
			\item Taking $C_{11}=C_{22}=0, C_{12}=C_{21}=1$ in \eqref{Rb1}-\eqref{Rb2}, the results of Theorem 7.1 and case 1, case 2 of Theorem 7.2 reduce to the results of Theorem 3.1 and Theorem 3.4 respectively of \cite{li}. Moreover, case 2 of Theorem 7.3 reduce to Theorem 4.2 of \cite{li}.
			\item Due to the relation \eqref{a4},  the blow-up time $\tau_{\ast}$ of the system \eqref{b1}-\eqref{b2} obtained in Section \ref{sec3}  (cf. \eqref{e1}) is a particular case of the blow-up time $\tau_{\ast \ast}$ established in Section \ref{sec5} (cf. \eqref{e2}). Also, in Section \ref{sec4}, under the assumption \eqref{a4},  the blow-up time $\tau^{\ast}$ of the system \eqref{b1}-\eqref{b2} obtained in Theorem \ref{thm4.1} is a special case of the blow-up time $\tau^{\ast \ast}$ established  in Theorem \ref{thm5.2} of Section \ref{sec5} whereas in Section \ref{sec7}, we have obtained the lower and upper bounds for the finite time blow-up of solutions for the semilinear system \eqref{b1}-\eqref{b2} perturbed by two-dimensional Brownian motion. 
			
		\end{enumerate}

	\end{Rem}
	
	\section{Conclusion}
	In this article, we estimated lower and upper bounds for the blow-up times and also obtained the bounds for the probability of blow-up solutions of a system of semilinear SPDEs with a standard one-dimensional Brownian motion. Then, we extended the above results to the same system perturbed by a two-dimensional Brownian motion. An extension of  the above results to semilinear SPDEs  subjected to  multiplicative noise of power type (cf. \cite{lv} for similar problems), will be a hard and challenging problem. We are transforming a stochastic PDE into a random PDE and such a transformation is known only for additive or linear multiplicative noises. Therefore, the blow-up problem for semilinear SPDEs with  multiplicative noise of power type  will be addressed in a future work.
	\medskip\noindent
	
\noindent{\bf Acknowledgements:} The First author is supported by the University Research Fellowship of Periyar University, India. M. T. Mohan would  like to thank the Department of Science and Technology (DST), India for Innovation in Science Pursuit for Inspired Research (INSPIRE) Faculty Award (IFA17-MA110). The Third author is supported by the Fund for Improvement of Science and Technology Infrastructure (FIST) of DST (SR/FST/MSI-115/2016). The authors sincerely would like to thank the reviewers for their valuable comments and suggestions, which helped us to improve the manuscript significantly. \\
	
	\noindent {\bf Data availability:} Data sharing not applicable to this article as no datasets were generated or analysed during the current study.\\

	\noindent {\bf Disclosure statement:} No potential competing interest was reported by the authors.


\begin{thebibliography}{99}
		
		\bibitem{chen2017} H. Chen, The asymptotic behavior for neutral stochastic partial functional differential equations, \emph{Stoch. Anal. Appl.} 35(6), 1060-1083 (2017). \url{http://dx.doi.org/10.1080/07362994.2017.1353427}.
		
		\bibitem{chow11} P.L. Chow, Explosive solutions of stochastic reaction-diffusion equations in mean $L^p$-norm, \emph{J. Differential Equations}, 250, 2567-2580 (2011), 
		
		\bibitem{chow2012} P. L. Chow and K. Liu, Positivity and explosion in mean $L^{p}$-norm of stochastic functional parabolic equations of retarded type, \emph{Stochastic Process. Appl.} 122, 1709-1729(2012). \url{https://doi.org/10.1016/j.spa.2012.01.012}.
		
		\bibitem{chow2015} P. L. Chow, \emph{Stochastic Partial Differential Equations}, Second Edition, Advances in Applied Mathematics, CRC Press, Boca Raton, FL, 2015.
		
		
		\bibitem{chues2000} I. D. Chueshov, P.A. Vuillermot, Long-time behavior of solutions to a class of stochastic parabolic equations with homogeneous white noise: It\^o's case, \emph{Stochastic Anal. Appl.} 28(4), 581-615(2007). \url{https://doi.org/10.1080/07362990008809687}.
		
		\bibitem{davies} E. B. Davies, Heat kernels and spectral theory, Cambridge University Press, \emph{Cambridge,} 1990.
		
		\bibitem{denis2005} L. Denis, A. Matoussi and L. Stoica, $L^{p}$ Estimates for the uniform norm of solutions of quasilinear SPDEs, \emph{Probab. Theory Related Fields}. 133, 437-463(2005). \url{http://doi.org/10.1007/s00440-005-0436-5}.
		
		\bibitem{doz2010} M. Dozzi and J.A. L\'{o}pez-Mimbela, Finite-time blowup and existence of global positive solutions of a semi-linear SPDE, \emph{Stochastic Process. Appl.} 120, 767-776(2010). \url{https://doi.org/10.1016/j.spa.2009.12.003}. 
		
		\bibitem{doz2013} M. Dozzi, E.T. Kolkovska  and J. A. L\'{o}pez-Mimbela, Exponential functionals of Brownian motion and explosion times of a system of semilinear SPDEs, \emph{Stoch. Anal. Appl.} 31, 975-991(2013). \url{http://dx.doi.org/10.1080/07362994.2013.817232}.
		
		\bibitem{doz2020} M. Dozzi, E.T. Kolkovska  and J. A. L\'{o}pez-Mimbela, Global and non-global solutions of a fractional reaction-diffusion equation perturbed by a fractional noise, \emph{Stoch. Anal. Appl.} 38, 959-978(2020). \url{https://doi.org/10.1080/07362994.2020.1751659}.
		
		\bibitem{evans} L.C. Evans, An Introduction to Stochastic Differential Equations, \emph{American Mathematical Society}, 2014.
		
		\bibitem{doz2014} M. Dozzi, E.T. Kolkovska and J.A. L\'{o}pez-Mimbela, Finite-time blowup and existence of global positive solutions of a semi-linear SPDE with fractional noise,  \emph{Mod. Stoch. Theory Appl}. 90, 95-108(2014).
		
		\bibitem{friedman1964} A. Friedman, Partial differential equations of parabolic type, \emph{Prentice-Hall, Englewood Cliffs}, (1964).
		
		%\bibitem{friedman1985} A. Friedman and B. Mcleod, Blow-up of positive solutions of semilinear heat equations, \emph{Indiana Univ. Math. J.} 34, 425-447(1985). 
		
		\bibitem{fuji1966} H. Fujita,  On the blowing up of solutions of the Cauchy problem for $u_{t}=\Delta+u^{1+\alpha}$, \emph{J. Fac. Sci. Univ. Tokyo. Sect. 1}, Mathematics. 13, 109-124(1966). 
		
		
		\bibitem{Fuji1968} H. Fujita, On some nonexistence and nonuniqueness theorems for nonlinear parabolic equations;	\emph{Proc. Symposium in Pure Math., Amer. Math. Soc.} 18, 105-113(1968).
		
		\bibitem{Gawarecki}  L. Gawarecki and V. Mandrekar, Stochastic differential equations in infinite dimensions with applications to stochastic partial differential equations,  Probability and its applications, \emph{Springer}, 2011.
		
		\bibitem{gess} B. Gess and J.M. T\"olle, Stability of solutions to stochastic partial differential equations, \emph{Int. J. Differ. Equ}. 260(6), 4973-5025(2016). \url{https://doi.org/10.1016/j.jde.2015.11.039}.
		
		
		\bibitem{Eug2017} E. Guerrero and J.A. L\'{o}pez-Mimbela, Perpetual integral functionals of Brownian motion and blowup of semilinear systems of SPDEs, \emph{Communications on Stochastic Analysis}. 11(3), 335-356(2017). \url{http://doi.org/10.31390/cosa.11.3.05}.
		
		\bibitem{Eug2019} E. Guerrero and J.A. L\'{o}pez-Mimbela, Pointwise eigenfunction estimates and mean $L^{p}-$ norm blowup of a system of semilinear SPDEs with symmetric L\'{e}vy generators, \emph{Statistics $\&$ Probability Letters}. 149, 47-54(2019). \url{https://doi.org/10.1016/j.spl.2019.01.015}.
		
		
		\bibitem{gyong2000} I. Gy\"{o}ngy and C. Rovira, On $L^{p}$- solution of semilinear stochastic partial differential equations, \emph{Stochastic Process. Appl.} 90(1), 83-108(2000). \url{https://doi.org/10.1016/S0304-4149(00)00033-8}.
		
		\bibitem{Fenman1997} M. Jeanblanc, J. Pitman, and M. Yor, The Feynman-Kac formula and decomposition of Brownian paths, \emph{Comput. Appl. Math}. 16, 27-52(1997).
		
		\bibitem{kaplan} S. Kaplan, On the growth of solutions of quasilinear parabolic equations, \emph{Communications on Pure and Applied Mathematics}. 16(3), 305-330(1963). \url{https://doi.org/10.1002/cpa.3160160307}.
		
		\bibitem{klebaner} F. C. Klebaner, Introduction to
		stochastic calculus with applications, 3rd edition, \emph{Imperial Collage Press}, London (2012).
		
		\bibitem{car2013} E.T. Kolkovska and J.A. L\'{o}pez-Mimbela, Sub- and Super-solution of a nonlinear PDE and application to semilinear SPDE, \emph{Pliska Stud. Math. Bulgar.} 22, 109-116(2013).
		
		%\bibitem{gyongy} I. Gy$\ddot{o}$ngy and  C. Rovira, On $L^p$-solutions of semilinear stochastic partial differential equations, \emph{Stochastic Processes and their Applications}, 90, 83-108(2000).

		
%		\bibitem{krylov1999} N.V. Krylov, An analytic approach to SPDEs, Math. \emph{Surveys Monogr.} 64, 185-242(1999). \url{http://dx.org/10.1090/surv/064/05}.
		
		\bibitem{li} M. Li and H. Gao, Estimates of blow-up times of a system of semilinear SPDEs, \emph{Math. Methods Appl. Sci.} 40(11), 4149-4159(2017). \url{https://doi.org/10.1002/mma.4294}.
		
		\bibitem{liao} M. Liao, Precise decay rates of global solutions and an explicit upper bound of the blow-up time to a pseudo-parabolic equation, \emph{Math. Meth. Appl. Sci. } 44, 9393-9406(2021). \url{https://doi.org/10.1002/mma.7367}.
		
\bibitem{lv}	G. Lv and J. Duan, Impacts of noise on a class of partial differential equations, \emph{J. Diff. Equations}, 258(6), 2196-2220(2015).
		
		%		\bibitem{semi1983} A. Pazy, \emph{Semigroups of linear operators and application to partial differential equations}, Applied Mathematical Science; 1983, 44.
		
		\bibitem{yor2005} H. Matsumoto and M. Yor, Exponential functionals of Brownian motion, I: Probability laws at fixed time, \emph{Probab. Surv.} 2, 312-347(2005). \url{http://dx.org/10.1214/154957805100000159.}
		
		
		\bibitem{niu2012} M. Niu and B. Xie, Impacts of Gaussian noises on the blow-up times of nonlinear stochastic partial differential equations, \emph{Nonlinear Analysis: Real World Applications}. 13, 1346-1352(2012). \url{https://doi.org/10.1016/j.nonrwa.2011.10.011}. 
		
		\bibitem{pazy} A. Pazy, Semigroups of linear operators and applications to partial differential equations, Applied	Mathematical Sciences, vol. 44, \emph{Springer-Verlag}, New York, 1983.
		
		\bibitem{peszat} S. Peszat and J. Zabczyk, Stochastic partial differential equations with L\'evy noise, \emph{Cambridge University Press}, (2007).
		
		\bibitem{revuz1999} D. Revuz and M. Yor, Continuous Martingales and Brownian motion, Fundamental principles of mathematical sciences, \emph{Springer-Verlag}, Berlin, (1999).  
		
		
		\bibitem{sha2019} 	L. Shangerganesh, N. Nyamoradi, G. Sathishkumar and S. Karthikeyan, Finite-time blow-up of solutions to a cancer invasion mathematical model with haptotaxis effects, \emph{Comput. Math. with Appl}. 77(8), 2242-2254(2019). \url{https://doi.org/10.1016/j.camwa.2018.12.001}.
		
		\bibitem{sat2019} G. Sathishkumar, L. Shangerganesh and S. Karthikeyan, Lower bounds for the finite-time blow-up of solutions of a cancer invasion model, \emph{ Electron. J. Qual. Theory Differ. Equ}. 12, 1-13(2019). \url{https://doi.org/10.14232/ejqtde.2019.1.12}.
		
		\bibitem{sat2021} G. Sathishkumar, L. Shangerganesh and S. Karthikeyan, Lower bounds of finite-time blow-up of solutions to a	two-species Keller-Segel chemotaxis model, {\it J. Appl. Nonlinear Dyn}. 10(1), 81-93(2021). \url{http://doi.org/10.5890/jand.2021.03.005}.
		
		\bibitem{wang} X. Wang, Blow-up solutions of the stochastic nonlocal heat equations, \emph{Stochastics and Dynamics}. 18, 1950014(2018) (12 pages). \url{https://doi.org/10.1142/S021949371950014X}.
		
		\bibitem{yor1992} M. Yor, On some exponential functionals of Brownian motion, \emph{Adv. Appl. Prob.} 24, 509-531(1992).
		
		\bibitem{yor2001} M. Yor, Exponential functionals of  Brownian motion and related processes, \emph{Springer}, 2001.
		

		
		
		
	\end{thebibliography}
\end{document}